\newcommand\Ba{\mathfrak{B}}
\newcommand\A{\mathcal A}
\newcommand\B{\mathcal B}
\newcommand\Z{\mathbb Z}
\newcommand\Q{\mathbb Q}
\newcommand\R{\mathbb R}
\newcommand\N{\mathbb N}
\newcommand\F{F_\ast}
\newcommand\K{\mathcal{K}}
\newcommand\V{\mathcal{V}}
\newcommand\s{\sigma}
\newcommand{\Ms}{\mathcal M_\sigma}
\newcommand\azd{\mathcal A^{\mathbb Z^d}}
\newcommand\bzd{\mathcal B^{\mathbb Z^d}}
\newcommand\dm{d_\mathcal M}
\newcommand\Ball{\mathbf{B}}
\newcommand\meas[1]{\widehat{\delta_{#1}}}
\newcommand\dinf{d_{\infty}}
\newcommand\Supp{\mathbf{Supp}}
\newcommand\Int{\mathbf{Int}}
\newcommand\Ext{\mathbf{Ext}}
\newcommand\Col{\mathbf{Col}}
\newcommand{\sti}{\vbox to 7pt{\hbox{\includegraphics{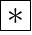}}}\hspace{.1em}}
\newcommand{\std}{\vbox to 7pt{\hbox{\includegraphics{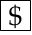}}}\hspace{.1em}}
\newcommand{\stdp}{\vbox to 7pt{\hbox{\includegraphics{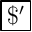}}}\hspace{.1em}}
\newcommand{\stdsub}{\vbox to 7pt{\hbox{\includegraphics[width=0.25cm]{figures/state_std}}}\hspace{.1em}}
\newcommand{\stp}{\vbox to 7pt{\hbox{\includegraphics{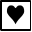}}}\hspace{.1em}}
\newcommand{\memb}{\vbox to 7pt{\hbox{\includegraphics{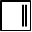}}}\hspace{.1em}}
\newcommand{\membco}{\vbox to 7pt{\hbox{\includegraphics{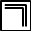}}}\hspace{.1em}}
\newcommand{\death}{\vbox to 7pt{\hbox{\includegraphics{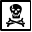}}}\hspace{.1em}}
\newcommand{\stg}{\vbox to 7pt{\hbox{\includegraphics{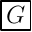}}}\hspace{.1em}}
\newcommand{\Bl}{\#}
\newcommand{\cun}{\vbox to 8pt{\hbox{\includegraphics[scale=1.1]{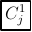}}}\hspace{.1em}}
\newcommand{\cunde}{\vbox to 8pt{\hbox{\includegraphics[scale=1.1]{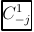}}}\hspace{.1em}}
\newcommand{\cundemi}{\vbox to 8pt{\hbox{\includegraphics[scale=1.1]{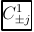}}}\hspace{.1em}}
\newcommand{\cde}{\vbox to 8pt{\hbox{\includegraphics[scale=1.1]{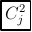}}}\hspace{.1em}}
\newcommand{\cdede}{\vbox to 8pt{\hbox{\includegraphics[scale=1.1]{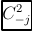}}}\hspace{.1em}}
\newcommand\Abirth{\A_{\mathrm{birth}}}
\newcommand\Agrowth{\A_{\mathrm{growth}}}
\newcommand\Aorga{\A_{\mathrm{orga}}}
\newcommand\Aevol{\A_{\mathrm{evol}}}
\newcommand\Acomp{\A_{\mathrm{comp}}}
\newcommand\Acopy{\A_{\mathrm{copy}}}
\newcommand\pbirth{p_{\mathrm{birth}}}
\newcommand\pgrowth{p_{\mathrm{growth}}}
\newcommand\porga{p_{\mathrm{orga}}}
\newcommand\pevol{p_{\mathrm{evol}}}
\newcommand\pcomp{p_{\mathrm{comp}}}
\newcommand\pcopy{p_{\mathrm{copy}}}
\newcommand\pmain{p_{\mathrm{main}}}
\DeclareMathOperator{\supp}{supp}
\journalname{TOCS}
\title{Characterisation of limit measures\\ of higher-dimensional cellular automata\thanks{Research partially supported by the FONDECYT Postdoctorado Proyecto 3130496 and by Basal project No. PFB-03 CMM, Universidad de Chile}}
\author{Martin Delacourt \and Benjamin Hellouin de Menibus}
\date{}
\institute{Martin Delacourt (Corresponding author)\at CMM, Universidad de Chile, CNRS, Av. Blanco Encalada 2120, Santiago, Chile
  \and
  Benjamin Hellouin de Menibus\at CMM, Universidad de Chile, CNRS, Av. Blanco Encalada 2120, Santiago, Chile\\ Departamento de Matemáticas, Universidad Andres Bello, Chile}
\begin{document}
\maketitle

\begin{abstract}
We consider the typical asymptotic behaviour of cellular automata of higher dimension ($\geq 2$). That is, we take an initial configuration at random according to a Bernoulli (i.i.d) probability measure, iterate some cellular automaton, and consider the (set of) limit probability measure(s) as $t\to\infty$. In this paper, we prove that limit measures that can be reached by higher-dimensional cellular automata are completely characterised by computability conditions, as in the one-dimensional case. This implies that cellular automata have the same variety and complexity of typical asymptotic behaviours as Turing machines, and that any nontrivial property in this regard is undecidable (Rice-type theorem). These results extend to connected sets of limit measures and Cesàro mean convergence. The main tool is the implementation of arbitrary computation in the time evolution of a cellular automata in such a way that it emerges and self-organises from a random configuration.
\end{abstract}

\keywords{Symbolic dynamics \and Cellular automata \and Limit measure \and Multidimensional \and Computability}\bigskip

Cellular automata are discrete dynamical systems defined by a local rule, introduced in the 40s by John von Neumann \cite{Neumann}. They model a large variety of discrete systems and are linked with various areas of mathematics and computer science, in particular computation theory, complex systems, ergodic theory and combinatorics.

One of the main catalysts of the study of cellular automata was their surprisingly complex and organised behaviours, even when iterated on configurations with no particular structure (e.g. chosen at random). To formalise these observations, many authors tried to describe their asymptotic behaviour by considering the limit set, which is the set of configurations that can be reached after arbitrarily many steps. These sets were shown to have potentially high computational complexity \cite{Maass-1995,Ballier-Guillon-Kari-2011}, and any nontrivial property on them is undecidable \cite{Kari-1994}. These observations built a bridge between the variety of dynamical behaviours and the computational content of the model. Nevertheless, the problem of characterising which sets can be limit sets of CA remains open. \bigskip

 In 2000, K\r{u}rka and Maass argued that limit sets did not provide a good description of empirical observations and introduced instead a measure-theoretical version of these sets \cite{km-2000}. The idea of $\mu$-limit sets is to choose the initial configuration at random, according to some probability measure $\mu$, to iterate the cellular automaton on this configuration and to consider all patterns whose probability to appear does not tend to 0. In the one-dimensional case, this approach yielded similar results of high complexity and undecidability \cite{bpt-2006,Boyer-Delacourt-Sablik-2010,Delacourt-2011,bdpst}. Although these two families of results appear similar and both require sophisticated constructions inside cellular automata, they provide insight about different kinds of dynamics (topological vs. measure-theoretical) and computational power (deterministic vs. probabilistic).
 
 In \cite{HellouinSablik}, H. and Sablik extended this approach to consider the limit probability measure(s). Still in the one-dimensional case, they provided a computational characterisation of the limit measures reachable by cellular automata, generalising the previous results. 
\bigskip
 
This article is an extended version of \cite{Delacourt-Hellouin}. In \emph{op.cit}, we aimed at extending the previous results to the two-dimensional setting. More precisely, we characterised all subshifts that can be $\mu$-limit sets of CA when $\mu$ is the uniform Bernoulli measure. The proof works by an explicit construction inspired by the one-dimensional constructions of \cite{bdpst,HellouinSablik}, although the higher dimensional setting has many specific challenges. In the present article, this two-dimensional construction is generalised to a $d$-dimensional space for any $d>2$; furthermore, through a more careful analysis, we are able to characterise reachable limit measures, which is a more general result. As a corollary, we obtain an undecidability result on properties of limit measures, and cover as well Cesàro mean convergence and the case where the limit measure is not unique.
 
Section~\ref{sec:definitions} is devoted to general definitions. In Section~\ref{sec:computability}, we introduce more specific computability tools, and in particular computability restrictions on possible limit probability measures. In Section~\ref{sec:construction}, we describe, process by process, the main technical construction that is the core of the proof of our results. Section~\ref{sec:results} contains the main results that are the corollaries of this construction, mainly:

\begin{theorem}[Main result]
The measures $\nu\in\Ms(\azd)$ for which there exist:
\begin{itemize}
\item an alphabet $\B\supset \A$,
\item a cellular automaton $F:\bzd\to\bzd$, and
\item a non-degenerate Bernoulli measure $\mu\in\Ms(\bzd)$
\end{itemize}
such that $F^t\mu\xrightarrow[t\to\infty]{}\nu$, are exactly the limit-computable measures.
\end{theorem}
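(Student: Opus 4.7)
The plan is to prove the two inclusions separately, using the computability framework of Section~\ref{sec:computability} and the main construction of Section~\ref{sec:construction}. For the ``only if'' direction, the argument should be routine: given a CA $F$ and a computable non-degenerate Bernoulli measure $\mu$, the probability $F^t\mu([w])$ of any finite pattern $w$ is computable uniformly in $t$, since it can be obtained by enumerating the finitely many $F^t$-preimages of $w$ in a large enough window and summing their $\mu$-probabilities. Hence $(F^t\mu)_t$ is a computable sequence of computable measures, and its weak-$*$ limit $\nu$ is limit-computable by the definition to be given in Section~\ref{sec:computability}.

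For the ``if'' direction, I would unwind a limit-computable $\nu\in\Ms(\azd)$ into a computable sequence $(\nu_n)_{n\ge 0}$ of computable measures converging weakly to $\nu$, and then feed this sequence into the construction of Section~\ref{sec:construction}. That construction should produce a CA $F$ over some $\B\supset\A$ whose behaviour under a uniform Bernoulli measure $\mu$ can be informally described as follows. Random initial data is organised into a disjoint family of $d$-dimensional boxes via the $\Abirth,\Agrowth,\Aorga$ stages; inside each successful box of linear size $L$, an internal simulation ($\Aevol,\Acomp$) computes $\nu_{n(L)}$ for some $n(L)\to\infty$; finally, $\Acopy$ samples a finite pattern according to $\nu_{n(L)}$, using reserved cells of the initial configuration as fresh randomness, and writes this sample on a dedicated $\A$-valued output layer.

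The analytical core is to show $F^t\mu\to\nu$ in the weak-$*$ topology. I would fix a finite pattern $w$ and an $\epsilon>0$, choose $n$ large enough that $|\nu_n([w])-\nu([w])|<\epsilon/2$, and, by $\Z^d$-invariance of both $\mu$ and the construction, estimate at the origin only the probability that at time $t$ it lies inside a successful box whose internal simulation has reached index at least $n$. A small enough $\pbirth$ keeps boxes sparse initially, $\Agrowth$ lets them expand isotropically, and $\Aorga$ destroys any box colliding with another or with uncleanable debris; the point is then to show that these parameters can be tuned so the probability of such a favourable event tends to one as $t\to\infty$, at which point the output layer contributes to $F^t\mu([w])$ a value within $\epsilon$ of $\nu([w])$.

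The main obstacle, as already in the two-dimensional case of \cite{Delacourt-Hellouin}, is precisely the self-organisation from a random initial configuration in dimension $d\ge 2$. In one dimension the boundary of a computing segment consists of two points and bounding walls can be implemented with essentially local constraints; in higher dimensions the boundary of a box is a $(d-1)$-dimensional hypersurface on which many collisions between growing boxes may occur simultaneously, and the random initial configuration itself contains, at arbitrarily large scales, fluctuations that must be tolerated or erased by $\Aorga$ before $\Aevol,\Acomp,\Acopy$ can operate safely. I expect this to be exactly the content of Section~\ref{sec:construction}, so that, once its geometric guarantees are in place, the present theorem follows by combining them with the computability arguments sketched above.
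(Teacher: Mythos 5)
Your decomposition into two inclusions is the same as the paper's, and your ``only if'' argument is exactly Proposition~\ref{prop:OneStep}: the $F^t$-preimages of a cylinder are finitely many cylinders in a computable window, so $(F^t\mu)_t$ is a uniformly computable sequence and its limit is limit-computable. However, your sketch of the ``if'' direction diverges from the paper at a decisive point. You propose to approximate $\nu$ by a computable sequence of measures $(\nu_n)$, have each organism compute $\nu_n$, and then \emph{sample} a finite pattern from $\nu_n$ on the output layer using ``reserved cells of the initial configuration as fresh randomness.'' The paper does something simpler and structurally different: by Proposition~\ref{prop:ApproximComputableMeasure}(ii), a limit-computable $\nu$ is the weak-$*$ limit of a uniformly computable sequence $\meas{w_n}$ of measures supported on \emph{periodic orbits}, i.e.\ it reduces the problem to a computable sequence of cubic \emph{patterns} $(w_n)$. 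Theorem~\ref{thm:MainTheorem} then builds a CA whose organisms deterministically tile their territory with concatenated copies of $w_n$ during the $n$-th generation; no sampling and no auxiliary randomness are involved in the output, and the $\mu$-limit set of measures is exactly the polygonal path through the $\meas{w_n}$, which collapses to $\{\nu\}$ when $\meas{w_n}\to\nu$. This matters: your random-sampling route would require (a) harvesting an unbounded amount of randomness that is provably independent of the self-organisation process at all scales, and (b) an in-CA procedure that samples larger and larger patterns from an arbitrary computable measure $\nu_n$, neither of which is trivial, whereas the periodic-tiling reduction makes the internal metabolism a deterministic copy process and pushes all the probabilistic analysis into Lemmas~\ref{lem:bound_proba_color}--\ref{lem:density_hearts} about organism survival. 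So your proposal is not wrong in spirit, but it misses the key reduction (limit-computable measure $\Rightarrow$ computable sequence of periodic patterns) that makes the paper's construction tractable, and as written the sampling step is a real gap rather than a routine detail.
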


This result holds even when the initial measure is chosen to be uniform. As a corollary, we show that any nontrivial property on limit measures is undecidable. These result extend to connected sets of limit measures and convergence in Cesàro mean.
 
\section{Definitions}
\label{sec:definitions}
\subsection{Symbols, configurations and cellular automata}

Let $\A$ be a finite set of symbols called \emph{alphabet}. For $d>0$, let $\azd$ be the space of $d$-dimensional \emph{configurations}. 

On $\Z^d$, we define the basis vectors $e_i = (\delta_i(k))_{0< k\leq d}$ (Kronecker deltas), that is, the vector worth $0$ on all coordinates except the $i$-th where it is worth $1$. Denote $\mathcal{U}nit(d)=\{\sum_{1\leq j\leq d}\delta_je_j\neq 0:\forall j, \delta_j\in\{-1,0,1\}\}$ and $\mathcal{H}yp(d)$ the set of hyperplanes that have a normal vector in $\mathcal{U}nit(d)$; these hyperplanes have a basis of $d-1$ vectors in $\mathcal{U}nit(d)$.

We will use the following distances between points of $\Z^d$: 
\[\forall x,y\in\Z^d,\hspace{1cm} d_\infty(x,y) = \max_{1\leq i\leq d} |x_i-y_i| \quad \text{and}\quad d_1(x,y) = \sum_{1\leq i\leq d} |x_i-y_i|.\]
An $\infty$-path is a sequence of points $z_1, \dots, z_k$ such that $d_\infty(z_i,z_{i+1})=1$ for any $i$. An $\infty$-connected set is a subset of $\Z^d$ such that any pair of points are connected by an $\infty$-path. Define similarly $1$-paths and $1$-connected subsets.

If we endow $\azd$ with the product topology of the discrete topology on $\A$, then $\azd$ is a Cantor set (compact, perfect and totally disconnected). This topology is also metrisable, for example using the \emph{Cantor metric}:
\[\forall x,y\in\azd,\ d_C(x,y) = 2^{-\delta_{x,y}}\quad\mbox{where}\quad\delta_{x,y} = \min\{||i||_\infty\ :\ x_i\neq y_i\}.\]
For a subset $U\subset \Z^d$, denote $x_U \in \A^U$ the restriction of $x$ to $U$. Denote $\A^\ast = \bigcup_{\underset{finite}{U\subset \Z^d}}\A^U$ the set of finite \emph{patterns}. For a pattern $w \in \A^U$, denote its \emph{support} $\supp(w) = U$, and its \emph{dimension} is the smallest $d$ such that $\supp(w)$ is isomorphic to a subset of $\Z^d$. We say a pattern is \emph{cubic}, respectively \emph{rectangular}, if its support is a $d-cube$, resp. a $d-box$ (Cartesian product of intervals).

The \emph{cylinder} defined by a pattern $u\in\A^\ast$ and a position $i\in\Z^d$ is $[u]_i = \{x\in\azd\ :\ x_{i+\supp(u)}=u\}$. For simplicity we sometimes write $[u]$ for $[u]_{(0,\dots,0)}$.

The \emph{shift map}, or \emph{shift}, is defined as:
\[\forall i\in\Z^d,\ \s_i(x) = (x_{i+j})_{j\in\Z^d}.\]

A \emph{subshift} is a closed subset of $\azd$ invariant under all shifts. Given a cubic pattern $u \in \A^{[0,n-1]^d}$, define the \emph{periodic configuration} $\mathstrut^\infty u^\infty$ by $^\infty u_{[0,n-1]^d}^\infty = u$ and $\s_{e_k}^n(\mathstrut^\infty u^\infty) = \mathstrut^\infty u^\infty$ for every $k\in [1,d]$.

A \emph{cellular automaton} (or CA) is a continuous function $F : \azd \to \azd$ that commutes with all shifts ($F\circ \s_{e_k} = \s_{e_k}\circ F$ for every $k$). By the Curtis-Hedlund-Lyndon theorem \cite{Hedlund}, it can be defined equivalently as a function $F(x) = (f((x_j)_{j\in (i+\mathcal N)}))_{i\in\Z^d}$ where $\mathcal N\subset \Z^d$ is a finite set called \emph{neighbourhood} and $f : \A^{\mathcal N} \to \A$ is called a \emph{local rule}.

\subsection{Probability measures on $\azd$}
Let $\Ba$ be the Borel $\sigma$-algebra of $\azd$ and $\mathcal{M}(\azd)$ the set of probability measures on $\azd$ defined on the $\sigma$-algebra $\Ba$. In this article, we focus on $\Ms(\azd)$ the set of \emph{$\s$-invariant probability measures} on $\azd$, 
that is to say, the measures $\mu$ such that $\mu(\s_k^{-1}(B))=\mu(B)$ for all $B\in\Ba$ and $k \in \Z^d$. Cylinders corresponding to finite patterns form a base of the topology. Since $\mu([u]_i) = \mu([u])$ for any $i\in\Z^d$ and $\mu\in\Ms(\azd)$, $\mu$ is entirely characterised by $\{\mu([u])\ :\ u\in\A^\ast\}$; actually, considering only cubic patterns is enough.

We endow $\Ms(\azd)$ with the \emph{weak$^\ast$} (or \emph{weak convergence}) topology:
\[\mu_n\xrightarrow[n\in\infty]{}\mu \quad\Longleftrightarrow\quad \forall u\in\A^\ast,\ \mu_n([u])\xrightarrow[n\to\infty]{} \mu([u]).\] In the weak$^{\ast}$ topology, $\Ms(\azd)$ is compact and metrisable. A metric is defined by
\[\dm(\mu,\nu)=\sum_{n\in\mathbb N}\frac{1}{2^n}\max_{u\in\A^{[0,n]^d}}|\mu([u])-\nu([u])|.\]

Define the \emph{ball} centred on $\mu\in\Ms(\azd)$ of radius $\varepsilon >0$ as
\[\Ball(\mu,\varepsilon)=\left\{\nu\in\Ms(\azd) : \dm(\mu,\nu)\leq\varepsilon \right\}.\]

Let us define some examples that we use throughout the article.

The \emph{Bernoulli measure} $\mu_{\lambda}$ associated to some vector $\lambda=(\lambda_a)\in[0;1]^\A$ satisfying $\sum_{a\in\A}\lambda_a=1$ is defined by \[\mu_{\lambda}([u_0\dots u_n])=\lambda_{u_0}\cdots\lambda_{u_n}\qquad \textrm{for all }u_0\dots u_n\in\A^\ast.\]

The \emph{Dirac measure} supported by $x\in\azd$ is defined as $\delta_x(A)= \mathbf 1_{x\in A}$. Generally $\delta_x$
is not $\s$-invariant. However, for any cubic pattern $w\in\A^{[0,n]^d}$, it is possible to define the \emph{$\s$-invariant measure supported by $^{\infty}w^{\infty}$} by taking the mean of the Dirac measures on the orbit under $\sigma$:
\[\meas{w}=\frac1{(n+1)^d}\sum_{i\in[0,n]^d}\delta_{\s_i(\mathstrut^\infty w^\infty)}.\]

The set of measures $\left\{\meas{w}:w\in\A^{\ast}\right\}$ is dense in $\Ms(\azd)$~\cite{Petersen-1983-livre}.

\subsubsection{Action of a cellular automaton on $\Ms(\azd)$ and limit measures} 
Let $F : \azd\to\azd$ be a cellular automaton and $\mu\in\Ms(\azd)$. Define the \emph{image measure} $\F\mu$ by
$\F\mu(A)=\mu(F^{-1}(A))$ for all $A\in\Ba$.
Since $F$ is $\s$-invariant, that is to say $F\circ\s=\s\circ F$, one deduces that $\F(\Ms(\azd)) \subset \Ms(\azd)$.
This defines a continuous application $\F:\Ms(\azd)\to\Ms(\azd)$.  

We consider in particular $\F^t\mu$ the iterated image of $\mu$ by $\F$. Since $\Ms(\azd)$ is compact in the weak$\ast$ topology, the sequence $(\F^t\mu)_{t\in\mathbb N}$ admits a set of limit points denoted $\V(F,\mu)$ and called the \emph{$\mu$-limit set of measures of $F$}. When $\V(F,\mu)$ is a singleton, i.e. when $\F^t\mu \underset{n\to\infty}\longrightarrow \nu$, we say $\nu$ is the \emph{limit measure} of $F$ starting on $\mu$.

This name stems from the standard $\mu$-limit set of $F$ defined as $\bigcup_{\nu\in\V(F,\mu)}\supp(\nu)$.


\section{Computability}
\label{sec:computability}
We now introduce the computability notions that are needed to state our main results. This exposition is very similar to the one found in \cite{HellouinSablik}, which was later expanded in \cite{These}, for the one-dimensional case. Indeed, most of the definitions and proofs only rely on the fact that the space is metric and separable, properties for which the increase in dimension is irrelevant. We omit those proofs that can be obtained by a straightforward substitution ($\A^\Z \to \azd$) from the proofs found in these references.

\subsection{Turing machines}
Turing machines are a standard and robust tool to define the computability of mathematical operations.
In the usual model, they have access to a one-dimensional, one- or two-sided infinite memory tape. In order to simplify some constructions, we consider in this article that the tape is $d$-dimensional and infinite in all directions. This does not affect the computing power of the model.\bigskip

A \emph{Turing machine} $\mathcal{TM}=(Q,\Gamma,\#,q_0,\delta,Q_F)$ is defined by:
\begin{itemize}
\item $\Gamma$ a finite alphabet, with a blank symbol $\#\notin\Gamma$. Initially, a $d$-dimensional infinite memory tape
is filled with $\#$, except for a finite region (the input), and a computing head is located at coordinate $(0, \dots, 0)$;
\item $Q$ a finite set of states, with an initial state $q_0\in Q$;
\item $\delta : (Q\cup\#)\times\Gamma\to (Q\cup\#)\times\Gamma\times\{\pm e_i\}_{1\leq i\leq d}$ the transition function. Given
the current state and the letter it reads on the tape --- which depends on its current position --- the function returns the new state, the letter to be written on the tape at current position, and the vector by which the head moves.
\item $Q_F\subset Q$ the set of final states --- when a final state is
reached, the computation stops and the output is the contents of the tape. 
\end{itemize}
A function $f : \A^\ast\to \A^\ast$ is \emph{computable} if there exists a Turing machine working on an alphabet
$\Gamma\supset\A$ that, on any input $w\in\A^\ast$, eventually stops and outputs $f(w)$.

\subsection{Computability of functions mapping countable sets}

To generalise this definition to functions mapping arbitrary countable sets $X \to Y$, we need to define an \emph{encoding}, that is, an alphabet $\A_X$ together with a bijection between $X$ and some subset of $\A_X^\ast$, and similarly for $Y$. Then the computability of a function $X\to Y$ is defined up to some encoding. However, in practice, reasonable encodings yield the same notion of computability. To simplify notations, we fix some canonical encodings for the rest of the paper :

\begin{description}
\item[$\Z$ (or $\mathbb N$):] Take $\A_\Z = \{0,1\}$ and encode an element $k\in\Z$ as its binary expansion surrounded by blank symbols.
\item[Product $X\times Y$:] Take $\A_{X\times Y} = \A_X\times\A_Y$ and encode $(x,y)$ as the product of encodings for $x$ and $y$.
\end{description}
Using this last case, we define a canonical encoding for $\Q$ as the canonical encoding for $\mathbb N\times \Z$, up to the bijection $\frac pq \mapsto (p,q)$ (with $p, q$ irreducible).

Furthermore, we define the computability of a set $K\subset X$ as the computability of the function $1_K : X\to \mathbb N$.

\subsection{Computability of probability measures}

As we mentioned above, a probability measure $\mu \in\Ms(\azd)$ is entirely described by the value of $\mu([u])$ for all $u\in\A^\ast$. In other words, an element of $\Ms(\azd)$ is described by a function $\A^\ast\to\R$. Since $\R$ is not countable, the standard ways to define notions of computability is to consider approximations by elements of $\Q$.\bigskip

A measure $\mu\in\Ms(\azd)$ is \emph{computable} if there exists a computable function $f:\A^{\ast}\times\mathbb N\to\Q$ such that
\[|\mu([u])-f(u,n)|<2^{-n} \qquad \textrm{for all }u\in\A^{\ast} \textrm{ and }n\in\mathbb N.\]  
It is \emph{limit-computable} if there exists a computable function $f:\A^{\ast}\times\mathbb N\to\Q$ such that
\[|\mu([u])-f(u,n)|\underset{n\to\infty}\longrightarrow 0 \qquad \textrm{for all }u\in\A^{\ast}.\]  

Additionally we define the notion of a \emph{uniformly computable sequence}. Informally, it means that a sequence of objects can be computed by a single algorithm which, given $n\in\mathbb N$ as input, returns a description of the $n$-th object of the sequence.

Formally, a sequence of measures $(\mu_i)_{i\in\N}$ is \emph{uniformly computable} iff there exists a computable function $f:\N\times \A^{\ast}\times \mathbb N\to\Q$ such that: \[|\mu_i([u])-f(n,u,i)|<2^{-n}\qquad\mbox{for all }u\in\A^\ast\mbox{ and } n,i\in\mathbb N^2.\]
It is easy to see that the limit of a uniformly computable sequence of measures is limit-computable (but not necessarily computable since the rate of convergence of $\mu_i$ to $\mu$ is not known). 

\begin{proposition}[Approximation by measures supported by periodic orbits]\label{prop:ApproximComputableMeasure}~

These notions can be defined in another equivalent way: 
\begin{itemize}
\item[(i)] A measure $\mu\in\Ms(\azd)$ is computable if and only if there exists a computable function $f:\mathbb N\to\A^{\ast}$  such that
$\dm\left(\mu,\meas{f(n)}\right)\leq 2^{-n}$
for all $n\in\mathbb N$.
\item[(ii)] A measure $\mu\in\Ms(\azd)$ is limit-computable if and only if there exists a computable function $f:\mathbb N\to\A^{\ast}$ such that
$\underset{n\to\infty}{\lim}\meas{f(n)}=\mu$.
\end{itemize}
\end{proposition}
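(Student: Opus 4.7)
My plan for (i) is to exploit the density of $\{\meas{w} : w \in \A^\ast\}$ in $\Ms(\azd)$ on both sides. For $(\Leftarrow)$, note that $\dm(\mu, \meas{f(n)}) \leq 2^{-n}$ forces each term of the defining series to be small, giving $|\mu([u]) - \meas{f(n)}([u])| \leq 2^{m-n}$ for any cubic $u \in \A^{[0,m]^d}$; since $\meas{f(n)}([u])$ is a computable rational (obtained by counting occurrences of $u$ in the periodisation of $f(n)$), setting $n := m+k$ delivers precision $2^{-k}$, and an arbitrary cylinder decomposes into finitely many cubic ones. For $(\Rightarrow)$, I enumerate patterns $(w_i)$ computably and, for input $n$, test each $w_i$ in turn by truncating the series for $\dm(\mu, \meas{w_i})$ after $n+1$ terms (tail bounded by $2^{-n-1}$) and approximating each $\mu([u])$ within $2^{-n-2}$ via the computability of $\mu$. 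I then output the first $w_i$ whose truncated value falls below $2^{-n-1}$; density guarantees that some $w_i$ with true distance at most $2^{-n-2}$ eventually passes the test, so the search halts.

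For (ii), the $(\Leftarrow)$ direction is immediate: $\meas{g(n)} \to \mu$ in weak$^\ast$ is precisely the pointwise convergence $\meas{g(n)}([u]) \to \mu([u])$, and $\meas{g(n)}([u])$ is a computable rational, so $f(u,n) := \meas{g(n)}([u])$ witnesses limit-computability. The $(\Rightarrow)$ direction is the principal challenge because the limit-computable witness $f$ comes without a convergence rate, ruling out the fixed-precision search of (i). My plan is a diagonal search with no explicit control: define $g(n) = w_{i^*(n)}$ where $i^*(n)$ is the smallest $i \leq n$ minimising
\[\tilde d_n(i) = \sum_{m=0}^n 2^{-m} \max_{u \in \A^{[0,m]^d}} |f(u, n) - \meas{w_i}([u])|,\]
which is plainly computable.

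Verifying $\meas{g(n)} \to \mu$ is the technical heart. Fix $\epsilon > 0$ and pick $i_0$ with $\dm(\mu, \meas{w_{i_0}}) < \epsilon/3$ by density; a triangle inequality on $\tilde d_n(i_0)$ yields $\tilde d_n(i_0) \leq \sum_{m=0}^n 2^{-m}\max_{u \in \A^{[0,m]^d}}|f(u,n) - \mu([u])| + \epsilon/3$. For each fixed $m$ the inner max is over a finite set and tends to $0$ by pointwise convergence $f(\cdot, n) \to \mu$, and each term of the outer sum is dominated by the summable $2^{-m}$; dominated convergence then forces the sum to tend to $0$. Hence $\tilde d_n(i^*(n)) \leq \tilde d_n(i_0) < \epsilon/2$ for large $n$, and the mirror triangle bound $\dm(\mu, \meas{g(n)}) \leq \sum_{m=0}^n 2^{-m}\max_u|f(u,n)-\mu([u])| + \tilde d_n(i^*(n)) + 2^{-n}$ also tends to $0$. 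I expect the main obstacle to be precisely this uniform-in-$m$ application of dominated convergence: both the truncation depth and the index $n$ at which we interrogate $f$ grow simultaneously, and the absence of rates forces the argument to route through pointwise-then-dominate rather than through explicit estimates. This is the essential reason (ii) requires a treatment different from (i), and parallels the one-dimensional case of \cite{HellouinSablik}.
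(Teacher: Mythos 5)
The paper omits this proof and defers to the one-dimensional treatment in \cite{HellouinSablik,These}; your proposal correctly reconstructs that style of argument, exploiting the density of the periodic-orbit measures $\{\meas{w}\}$ on both sides of each equivalence, a fixed-precision dovetailing search for (i), and a rate-free argmin search combined with a dominated-convergence passage for (ii). Your diagnosis that (ii) cannot reuse the fixed-precision mechanism and must route through pointwise-convergence-then-domination is exactly the right way to handle the absence of a convergence rate.

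Two points need tightening before the argument is airtight. In (i)~$\Rightarrow$ the constants do not close as stated: with truncation tail $\leq 2^{-n-1}$, approximation error bounded by $\sum_{m\leq n+1} 2^{-m}\cdot 2^{-n-2} < 2^{-n-1}$, and acceptance threshold $2^{-n-1}$, a candidate $w_i$ at true distance $2^{-n-2}$ may score as high as $2^{-n-2}+2^{-n-1}=3\cdot 2^{-n-2}>2^{-n-1}$ and fail the test, so the search is not guaranteed to halt; conversely a passing candidate is only guaranteed to lie within $2^{-n-1}+2^{-n-1}+2^{-n-1}=3\cdot 2^{-n-1}>2^{-n}$ of $\mu$. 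Dropping the working precision by a couple of levels --- truncate so the tail is $\leq 2^{-n-2}$, approximate $\mu([u])$ within $2^{-n-4}$, accept at threshold $2^{-n-2}$ --- fixes both directions. In (ii)~$\Rightarrow$ the dominated-convergence step requires the summable bound $2^{-m}\max_u|f(u,n)-\mu([u])|\leq 2^{-m}$ \emph{uniformly in $n$}; since the witness $f$ is only assumed to map into $\Q$, you should first replace $f(u,n)$ by $\min(1,\max(0,f(u,n)))$, which preserves the limit-computability witness (clipping can only decrease $|f(u,n)-\mu([u])|$ since $\mu([u])\in[0,1]$) and yields $\max_u|f(u,n)-\mu([u])|\leq 1$ for all $m,n$. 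With these adjustments the proof is complete.
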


Notice the parallel with the definition of the computability of a real: in both cases, an object is computable if it is approximated by a uniformly computable sequence of elements taken from a dense subset ($\Q$ and the measures supported by periodic orbits, respectively) with a known rate of convergence.

\subsection{Action of a cellular automaton on computable measures}

\begin{proposition}[First computability obstruction]\label{prop:OneStep}
Let $F:\azd\to\azd$ be a cellular automaton and $\mu\in\Ms(\azd)$ be a computable measure. Then $(\F^t\mu)_{t\in\N}$ is a
uniformly computable sequence of measures. In particular, if $\F^t\mu\underset{t\to\infty}{\longrightarrow}\nu$ then
$\nu$ is limit-computable.
\end{proposition}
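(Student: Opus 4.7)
The plan is to exploit the basic finiteness property of cellular automata: because $F$ has a finite neighbourhood $\mathcal N$, the value of $F^t(x)$ on any finite support depends only on the restriction of $x$ to a larger, but still finite, support. Precisely, for a cubic pattern $u$ with support $U \subset \Z^d$, set
\[ V_t := \{\,x + n_1 + \cdots + n_t \;:\; x \in U,\ n_1, \dots, n_t \in \mathcal N\,\}, \]
a finite set that is computable from $\mathcal N$, $U$ and $t$. The local rule iterated $t$ times induces a map $F^{(t)} : \A^{V_t} \to \A^U$, and
\[ \F^t \mu([u]) \;=\; \sum_{\substack{v \in \A^{V_t} \\ F^{(t)}(v) = u}} \mu([v]). \]
The set $S_{u,t} := \{v \in \A^{V_t} : F^{(t)}(v) = u\}$ is finite and computable from $(u, t)$: enumerate all $v \in \A^{V_t}$ and apply the local rule $t$ times to each.

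Given inputs $(n, u, i)$, the algorithm computes $V_i$ and $S_{u,i}$ as above, then uses computability of $\mu$ to produce, for each $v \in S_{u,i}$, a rational approximation $a_v$ with $|a_v - \mu([v])| < 2^{-n}/|\A|^{|V_i|}$. Returning $f(n, u, i) := \sum_{v \in S_{u,i}} a_v$, and using $|S_{u,i}| \leq |\A|^{|V_i|}$, the triangle inequality gives $|f(n, u, i) - \F^i\mu([u])| < 2^{-n}$, which is exactly the defining property of a uniformly computable sequence of measures.

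For the second assertion, assume $\F^t\mu \to \nu$. Set $g(u,n) := f(n,u,n)$, which is computable. By the triangle inequality,
\[ |g(u,n) - \nu([u])| \;\leq\; |f(n,u,n) - \F^n\mu([u])| \;+\; |\F^n\mu([u]) - \nu([u])|. \]
The first term is below $2^{-n}$ by construction, and the second tends to $0$ by the definition of weak$^\ast$ convergence. Hence $g$ witnesses that $\nu$ is limit-computable.

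The main obstacle is really just accounting for the accumulated approximation error: one must choose the precision used when approximating each $\mu([v])$ fine enough so that summing over the up to $|\A|^{|V_i|}$ preimage patterns keeps the total error below $2^{-n}$. Since $|\A|^{|V_i|}$ is itself computable from the inputs, this is harmless. Notice that the argument uses only finiteness of the neighbourhood and not the dimension, so it transposes unchanged from the one-dimensional references \cite{HellouinSablik,These}.
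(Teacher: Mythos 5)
Your proof is correct and is essentially the standard argument (the one the paper defers to in \cite{HellouinSablik,These}, with $\A^\Z$ replaced by $\azd$): express $\F^t\mu([u])$ as the finite sum of $\mu([v])$ over preimage patterns on the $t$-fold Minkowski-sum neighbourhood, approximate each term with precision scaled by the (computable) number of summands, and diagonalise in $n$ for the limit-computability part.
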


In general, $\F^t\mu$ does not have a single limit point, but a compact set of accumulation points. To obtain a similar obstruction, we extend our computability definitions to those objects.

\subsection{Compact sets in computable analysis}

Extending naively the definition for countable sets using the characteristic function does not work since the set of inputs would not be countable. Instead, we use a general definition for metric spaces that possess a countable dense subset, $(\meas{w})_{w\in\A^\ast}$ in the case of $\Ms(\azd)$.

A closed set $\K\subset \Ms(\azd)$ is \emph{computable} if the set
 $\left\{(w,r)\in\A^\ast\times \Q\ :\ \overline{\Ball(\meas{w}, r)} \cap \K \neq \emptyset \right\}$
is computable (as a countable set), that is, if its characteristic function is.

However, the set of limit points of the sequence $(\F^t\mu)_{t\in\N}$, where $\mu$ is computable, is not necessarily computable (or even limit-computable). We extend our definitions to the first steps of the so-called arithmetical hierarchy, first on countable spaces, then on closed subsets of $\Ms(\azd)$.

Let $X,Y$ be two countable sets, with $Y$ being ordered.

A sequence of functions $(f_i:X\to Y)_{i\in\N}$ is \emph{uniformly computable} if $(i,x)\mapsto f_i(x)$ is computable.

A function $f:X\to Y$ is \emph{$\Pi_2$-computable} (resp. \emph{$\Sigma_2$-computable}) if $f = \inf_{i\in\N}\sup_{j\in\N}f_{i,j}$ (resp. $f = \sup_{i\in\N}\inf_{j\in\N}f_{i,j}$), where $(f_{i,j})_{(i,j)\in\N^2}$ is a uniformly computable sequence of functions.

 A closed set $\K\subset \Ms(\azd)$ is \emph{$\Pi_2$-computable} if the set
 \[\left\{(w,r)\in\A^\ast\times \Q\ :\ \overline{\Ball(\meas{w}, r)} \cap \K \neq \emptyset \right\}\]
is $\Pi_2$-computable, that is, its characteristic function is.

\begin{remark}
The symmetric notions of $\Pi_2$- and $\Sigma_2$-computability come from an analogy with the real arithmetical hierarchy~\cite{Zheng-Weihrauch-2001,Ziegler-2005}. These definitions extend naturally to $\Pi_n$- and $\Sigma_n$-computability. Other equivalent definitions exists, see for example \cite{HellouinSablik} for $\Pi_2$-computability or \cite{These} for a more general result.
\end{remark}

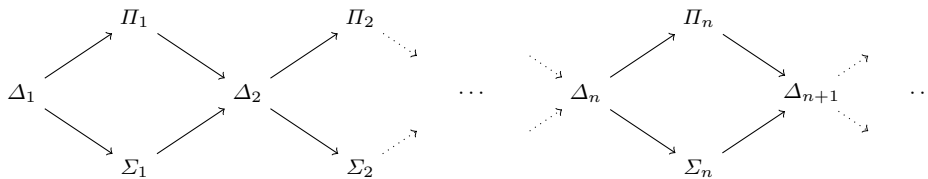
\begin{figure}[!ht]
\begin{center}
\begin{tikzpicture}
\draw node(delta1) at (0,0) {$\Delta_1$};
\draw node(sigma1) at (1.5,-1) {$\Sigma_1$};
\draw node(pi1) at (1.5,1) {$\Pi_1$};
\draw node(delta2) at (3,0) {$\Delta_2$};
\draw node(sigma2) at (4.5,-1) {$\Sigma_2$};
\draw node(pi2) at (4.5,1) {$\Pi_2$};
\draw node at (6,0) {$\cdots$};
\draw node(deltan) at (7.5,0) {$\Delta_n$};
\draw node(sigman) at (9,-1) {$\Sigma_n$};
\draw node(pin) at (9,1) {$\Pi_n$};
\draw node(deltanp1) at (10.5,0) {$\Delta_{n+1}$};
\draw node at (12,0) {$\cdots$};
\draw[->] (delta1) -- (sigma1);
\draw[->] (sigma1) -- (delta2);
\draw[->] (delta1) -- (pi1);
\draw[->] (pi1) -- (delta2);
\draw[->] (delta2) -- (pi2);
\draw[->] (delta2) -- (sigma2);
\draw[->, dotted] (sigma2) -- (5.25, -.5);
\draw[->, dotted] (pi2) -- (5.25, .5);
\draw[->, dotted] (6.75, .5)  -- (deltan);
\draw[->, dotted] (6.75, -.5)  -- (deltan);
\draw[->] (deltan) -- (sigman);
\draw[->] (deltan) -- (pin);
\draw[->] (sigman) -- (deltanp1);
\draw[->] (pin) -- (deltanp1);
\draw[->, dotted] (deltanp1) -- (11.25, -.5);
\draw[->, dotted] (deltanp1) -- (11.25, .5);
\end{tikzpicture}
\caption{Representation of the computability hierarchy of closed subsets of $\Ms(\azd)$. Arrows indicate strict inclusion relations \cite{Zheng-Weihrauch-2001}.}\label{fig:arith}
\end{center}
\end{figure}

\begin{proposition}[Second computability obstruction]~

Let $F:\azd\to\azd$ be a cellular automaton and $\mu$ be a computable measure. Then $\V(F,\mu)$ is a nonempty $\Pi_2$-computable compact set. 
\end{proposition}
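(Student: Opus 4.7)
The proof naturally splits into the topological claims (nonempty and compact) and the effective claim ($\Pi_2$-computability). The former is immediate: $\Ms(\azd)$ is compact metrisable, so the sequence $(\F^t\mu)_{t\in\N}$ admits at least one accumulation point, and the set of accumulation points of a sequence in a metric space is always closed, hence compact in our setting. All the real work is to reduce the condition ``$\overline{\Ball(\meas{w}, r)} \cap \V(F,\mu) \neq \emptyset$'' to an arithmetical statement over computable data.

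The key step is to establish the equivalence
\[\overline{\Ball(\meas{w}, r)} \cap \V(F,\mu) \neq \emptyset \iff \forall k, N \in \N,\ \exists t \geq N,\ \dm(\F^t\mu, \meas{w}) < r + 2^{-k}.\]
The forward direction is direct: any $\nu \in \V(F,\mu) \cap \overline{\Ball(\meas{w}, r)}$ is approached by infinitely many $\F^t\mu$, which are therefore $(r + 2^{-k})$-close to $\meas{w}$ for arbitrarily large $t$. For the converse, a diagonal extraction produces, for each $k$, a time $t_k > t_{k-1}$ with $\dm(\F^{t_k}\mu, \meas{w}) < r + 2^{-k}$; by compactness of $\Ms(\azd)$, the sequence $(\F^{t_k}\mu)_k$ admits a convergent subsequence whose limit $\nu$ lies in $\V(F,\mu)$ (since $t_k \to \infty$) and satisfies $\dm(\nu, \meas{w}) \leq r$ by passing to the limit.

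With this equivalence in hand, effectivity is checked as follows. By the First Computability Obstruction, $(\F^t\mu)_{t\in\N}$ is a uniformly computable sequence, while the measures $\meas{w}$ are uniformly computable directly from their definition. Since the tail of the defining series of $\dm$ decays geometrically, one obtains a computable function $\widetilde d : \N \times \A^\ast \times \N \to \Q$ with $|\dm(\F^t\mu, \meas{w}) - \widetilde d(t, w, n)| < 2^{-n}$. Therefore $\dm(\F^t\mu, \meas{w}) < r + 2^{-k}$ holds if and only if there exists $n$ with $\widetilde d(t, w, n) + 2^{-n} < r + 2^{-k}$, a $\Sigma_1$-predicate. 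Letting $f_{(k,N),(t,n)}(w, r)$ be the characteristic function of the decidable predicate ``$t \geq N$ and $\widetilde d(t, w, n) + 2^{-n} < r + 2^{-k}$'', the countable family $(f_{(k,N),(t,n)})$ is uniformly computable, and the characteristic function of $K = \{(w,r) : \overline{\Ball(\meas{w}, r)} \cap \V(F,\mu) \neq \emptyset\}$ coincides with $\inf_{(k,N)} \sup_{(t,n)} f_{(k,N),(t,n)}$, which is the required $\Pi_2$ form.

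The main obstacle is really the equivalence in the second paragraph: the accumulation points of $(\F^t\mu)$ are asymptotic objects not accessible from any finite prefix of the sequence, and the diagonal extraction is what converts this genuinely topological information into a purely arithmetical condition. The slack $2^{-k}$ in the strict inequality is essential, since an exact condition of the form $\dm(\nu, \meas{w}) \leq r$ cannot be tested from finite-precision approximations; this is also the source of the $\Pi_2$ level, which is optimal without additional assumptions.
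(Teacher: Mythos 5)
Your proof is correct and follows the route one would expect from the paper's framework: the equivalence $\overline{\Ball(\meas{w}, r)} \cap \V(F,\mu) \neq \emptyset \iff \forall k, N\ \exists t\geq N,\ \dm(\F^t\mu, \meas{w}) < r + 2^{-k}$ (forward direction trivial, backward by extracting an increasing sequence and using compactness), combined with the uniform computability of $(\F^t\mu)_t$ from the First Computability Obstruction and a truncation-of-the-tail argument to get computable rational approximations of $\dm(\F^t\mu, \meas{w})$. The paper defers the proof to \cite{HellouinSablik} and \cite{These}, where this is precisely the argument used; your reduction to the $\inf\sup$ form of a uniformly computable family matches the definition of $\Pi_2$-computability the paper adopts.
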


Aiming at a reciprocal, notice that $\Pi_2$-computable compact sets can be all be described as the set of limit points of a sequence of measures $(\meas{w_n})_{n\in\N}$, where the sequence of patterns $(w_n)_{n\in\N}$ is uniformly computable. However, our construction cannot do better that following such a sequence along a polygonal path, that is, along segments of the form $\left[\meas{w_i},\meas{w_{i+1}}\right]=\left\{t\meas{w_i}+(1-t)\meas{w_{i+1}}:t\in[0,1]\right\}$. The following proposition shows that this corresponds to connected limit sets of measures (not necessarily path-connected).

\begin{proposition}[Technical characterisation of $\Pi_2$-CCC sets]\label{prop:polygonalcover}~

Let $\K\subset\Ms(\azd)$ be a non-empty $\Pi_2$-computable, compact, connected set ($\Pi_2$-CCC for short). Then there exists a uniformly computable sequence of cubic patterns $(w_n)_{n\in\N}$ such that $\K$ is the limit of the polygonal path defined by $(w_n)_{n\in\N}$, that is, \[\K=\bigcap_{N>0}\overline{\bigcup_{n\geq N}\left[\meas{w_n},\meas{w_{n+1}}\right]}.\]
\end{proposition}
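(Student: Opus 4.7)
The plan is to follow the strategy of the one-dimensional case (see \cite{HellouinSablik}), which adapts to higher dimensions because it only uses the separable-metric structure of $\Ms(\azd)$ and the density of measures of the form $\meas w$. Using the equivalent characterisation of $\Pi_2$-computable closed sets mentioned above, I would start from a uniformly computable sequence of cubic patterns $(v_n)_{n\in\N}$ whose associated measures $(\meas{v_n})_{n\in\N}$ have $\K$ as their set of limit points. The goal is to interpolate $(v_n)$ into a new uniformly computable sequence $(w_n)_{n\in\N}$ whose successive terms are close enough that the polygonal path stays near $\K$ and traces it out in the limit.

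First I would record two useful observations. Since $\K$ is closed in the compact space $\Ms(\azd)$, for every $\varepsilon > 0$ only finitely many $\meas{v_n}$ lie at $\dm$-distance greater than $\varepsilon$ from $\K$; and every tail $\{\meas{v_n} : n \geq N\}$ is $\varepsilon$-dense in $\K$. For each scale $m \in \N$ I would then construct a finite ``walk'' in $\K$ at resolution $2^{-m}$: take a prefix $(v_0, \ldots, v_{N_m})$ and form the graph joining pairs at $\dm$-distance $\leq 2^{-m}$. The connectedness of $\K$ guarantees that, for $N_m$ large enough, the subgraph spanned by the vertices lying close to $\K$ forms a single connected component that is $2^{-m}$-dense in $\K$, and a spanning-tree traversal yields the desired finite pattern sequence $(u^{(m)}_1, \ldots, u^{(m)}_{k_m})$ with consecutive terms at distance $\leq 2^{-m}$, all close to $\K$, and $2^{-m}$-dense in $\K$.

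The main obstacle is making the choice of $N_m$ effective, since one cannot decide whether a given $\meas{v_n}$ lies close to $\K$ (a $\Pi_2$ question). The workaround is that the stray $\meas{v_n}$ far from $\K$ are only finitely many, so they produce only components of bounded size, whereas the close-to-$\K$ component grows without bound as $N_m$ increases. One can therefore enlarge $N_m$ until a connected component passes a prescribed, computable size threshold tending to infinity with $m$, then take a spanning walk through that component. Any stray pattern included at finite scales contributes only finitely many segments to the polygonal path per scale, hence does not affect its limit.

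Finally, concatenating the walks for $m = 1, 2, \ldots$ defines the uniformly computable sequence $(w_n)_{n\in\N}$. The $2^{-m}$-density of the $m$-th walk yields the inclusion $\K \subseteq \bigcap_N \overline{\bigcup_{n \geq N}[\meas{w_n}, \meas{w_{n+1}}]}$, while each segment of the $m$-th walk being contained in the closed $C\cdot 2^{-m}$-neighbourhood of $\K$ (its endpoints lying both within $2^{-m}$ of $\K$ and within $2^{-m}$ of each other) gives the reverse inclusion, since $\K$ is closed.
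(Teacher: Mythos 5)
This paper does not itself prove the proposition; it defers to \cite{HellouinSablik} (and \cite{These}), noting that the one-dimensional argument transfers verbatim via the substitution $\A^\Z \to \azd$. Your outline follows the same overall strategy as those references: realize $\K$ as the limit set of a uniformly computable sequence $(\meas{v_n})$, build proximity graphs at scales $2^{-m}$, exploit chain-connectedness of $\K$, and concatenate the resulting spanning walks. The difficulty you identify -- that one cannot decide which $\meas{v_n}$ are close to $\K$ -- is indeed the crux, but the workarounds as written leave a genuine gap.

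The key problem is the sentence ``Any stray pattern included at finite scales contributes only finitely many segments to the polygonal path per scale, hence does not affect its limit.'' Finitely many segments per scale does not prevent infinitely many segments in total, and an infinite recurrence near a stray is exactly what would place points outside $\K$ into $\bigcap_N\overline{\bigcup_{n\geq N}[\meas{w_n},\meas{w_{n+1}}]}$. What must be shown -- and does hold, but is not free -- is that each fixed stray $\meas{v_i}$ at distance $D>0$ from $\K$ is excluded from the chosen component for all but finitely many $m$. A correct argument is quantitative: a simple path in the scale-$m$ proximity graph from a vertex within $2^{-m}$ of $\K$ to $\meas{v_i}$ must cross the annulus $\{x : D/4 < \dm(x,\K) < 3D/4\}$, which requires at least $D\cdot 2^{m-1}$ distinct vertices, each a stray at distance $>D/4$ from $\K$; since only finitely many such strays exist, this fails once $m$ is large. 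A related soft spot is the selection rule: an absolute size threshold $s(m)\to\infty$ does not guarantee that the chosen component is the one anchored near $\K$, nor that it is $2^{-m}$-dense -- the number of strays at scale $m$ is not computable and could exceed $s(m)$, and the good component may pass the threshold long before it becomes dense. A relative criterion (e.g.\ enlarge $N_m$ until one component holds a clear majority of vertices, and then keep enlarging until density is witnessed) is needed. Without both repairs, the reverse inclusion $\subseteq\K$ is not established and the forward inclusion is only partially justified.
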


As we mentioned before, the proof of these statements can be found in \cite{HellouinSablik} or \cite{These} for an extended version.

\section{Construction}
\label{sec:construction}

To obtain the results announced in the introduction,  we prove the following result in conjunction to Proposition~\ref{prop:polygonalcover}.

\begin{theorem}\label{thm:MainTheorem}
For any uniformly computable sequence $(w_n)_{n\in\N}$ of cubic patterns of $\B^\ast$ of dimension at most $d$, there exists a larger alphabet $\A\supset \B$ and a cellular automaton $F : \azd\to\azd$ such that for any Bernoulli measure $\mu\in\Ms(\azd)$: \[\V(F,\mu) = \bigcap_{N>0}\overline{\bigcup_{n\geq N}\left[\meas{w_n},\meas{w_{n+1}}\right]}.\]
\end{theorem}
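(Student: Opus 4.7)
The plan is to construct $F$ as a layered CA whose alphabet $\A$ is (roughly) the Cartesian product of several sub-alphabets, each implementing a distinct process, plus a visible layer valued in $\B$ whose statistics drive the limit measure. The blueprint follows the one-dimensional approach of \cite{bdpst,HellouinSablik}, extended to dimension $d$ as in \cite{Delacourt-Hellouin}. The processes I would implement are: a \emph{birth} layer $\Abirth$ whose local rule marks rare seed patterns that appear almost surely with positive density under any non-degenerate Bernoulli measure; a \emph{growth} layer $\Agrowth$ that, around each surviving seed, builds an expanding cubic region and erases the underlying random noise; an \emph{organisation} layer $\Aorga$ that sets up Turing-machine scaffolding inside the cleaned region; an \emph{evolution} layer $\Aevol$ that runs the Turing machine, which successively computes the patterns $w_n$ using the assumed uniformly computable algorithm; a \emph{copy} layer $\Acopy$ that, once $w_n$ is produced, tiles the region with copies of $w_n$ on the visible layer; and finally a \emph{competition} layer $\Acomp$ that arbitrates collisions between regions by eliminating the younger one, so that surviving regions become larger and larger with time.

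The intended dynamics is as follows. Starting from $\mu$, positive density of seeds appears almost surely at time $0$, so birth happens throughout space. Competition thins the seeds out, and at time $t$ the surviving regions have typical size and spacing that both grow with $t$; inside such a region, the Turing machine has more and more room and time, so the index $n$ it has reached grows accordingly. Tuning the relative speeds of growth, evolution and copy, I would arrange that at certain times $t_n$ almost every cell belongs to a mature region currently tiled by $w_n$, giving $\F^{t_n}\mu$ close to $\meas{w_n}$; and that between $t_n$ and $t_{n+1}$ each region gradually overwrites $w_n$ by $w_{n+1}$ in a controlled way, so that $\F^t\mu$ follows the segment $[\meas{w_n},\meas{w_{n+1}}]$ at essentially constant speed.

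Concretely, I would carry the proof out in this order. First, specify each of the six processes on its own and verify its behaviour in isolation. Second, merge them into one CA $F$ on the product alphabet with a clear priority policy (competition overrides growth, growth overrides copy, etc.). Third, show that $\mu$-almost surely the original random layer is absorbed in finite time inside growing regions, and that the fraction of cells not yet mature tends to $0$. Fourth, estimate $\F^t\mu([u])$ for an arbitrary cubic pattern $u\in\B^\ast$ by decomposing space into region types (mature-on-$w_n$, transitional between $w_n$ and $w_{n+1}$, border, competition loser, still-dirty) and showing that the dominant contribution matches the prescribed point of the polygonal path while the others vanish; this yields both inclusions $\V(F,\mu)\subseteq \bigcap_N\overline{\bigcup_{n\geq N}[\meas{w_n},\meas{w_{n+1}}]}$ and $\supseteq$ simultaneously.

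The hard part will be controlling these junk contributions so that no extra accumulation point creeps into $\V(F,\mu)$ beyond the prescribed polygonal limit: border cells, competition losers, transitional patches and cells still carrying initial noise must all be bounded by quantities that tend to $0$ uniformly in the Bernoulli parameter $\lambda$, since the statement requires the same $F$ to work for every non-degenerate Bernoulli measure. Synchronising timescales so that the Turing machine always finishes step $n$ before its host region is either overtaken or instructed to rewrite with $w_{n+1}$ demands a delicate choice of growth, evolution and copy speeds depending on the (not uniformly bounded) time complexity of the algorithm computing $(w_n)$. Dimension $d\geq 2$ adds genuinely new difficulties absent in $1$D: $\infty$-connectedness must be exploited to propagate signals and to simulate the Turing head on a $d$-dimensional tape embedded in a growing cube; competition between several simultaneously colliding neighbours (not just two) has to be resolved consistently; and because empirical statistics are averaged over $d$-cubes, the boundary of a region of volume $V$ has size of order $V^{(d-1)/d}$, so controlling boundary contributions is quantitatively more demanding than in the one-dimensional construction.
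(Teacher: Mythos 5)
Your overall blueprint — a product alphabet with layers for seeding, cleaning, organising, computing, copying, and arbitrating collisions, tuned so that at times $t_n$ almost every cell sits in a mature region tiled by $w_n$ — is indeed the paper's architecture. However, two points in your sketch of the competition mechanism are wrong, and one of them is the precise place where the higher-dimensional case diverges from the $1$-dimensional construction.

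First, you propose that when two regions collide, the \emph{younger} one is eliminated. This is backwards. The comparison process has to favour initialised regions over ``zombie'' regions that were already present (with spuriously well-formed counters) in the random initial configuration. Since initialised regions all start their age counters at $0$ at time $1$, they are necessarily the \emph{youngest}: any well-formed membrane present at time $t$ has an age counter of value at least $t-1$, with equality only for initialised ones (Fact~\ref{lem:minimal_age}). So the rule must be survival of the youngest, with ties (all initialised) leading to a merge. Killing the younger region would systematically destroy the controlled regions and preserve the junk.

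Second, you conflate two distinct processes: membrane-versus-membrane competition (whose only job is to destroy zombie membranes; all initialised membranes merge) and heart-versus-heart conflict (which thins the population of organisms so their size can grow). For the latter, the paper deliberately does \emph{not} use a deterministic tie-break such as ``smaller cell wins.'' That is the $1$-dimensional strategy, and the paper states explicitly that the resulting growth-rate analysis could not be carried out in dimension $d\geq 2$. Instead, each heart carries independent random bits harvested from the initial Bernoulli measure, conflicts are settled using these bits, and Lemma~\ref{lem:bound_proba_color} keeps the bits' distribution bounded away from $0$ and $1$ uniformly in the generation. This is what makes the density-of-hearts estimate (Lemma~\ref{lem:density_hearts}) go through with a doubly-exponential organism size $K_n$. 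Your proposal never mentions this randomised kill choice, so you would hit exactly the wall the authors describe. Finally, a smaller remark: the theorem asserts that $\V(F,\mu)$ has the stated form \emph{for each} non-degenerate Bernoulli $\mu$; it does not require the junk densities to vanish uniformly in $\lambda$, so that demand can be dropped.
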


In the rest of the article, assume some fixed alphabet $\B$ and some uniformly computable sequence $(w_n)_{n\in\N}$ of patterns of $\B^\ast$. We present the construction of the alphabet $\A$ and cellular automaton $F$.


\subsection{Sketch of the construction}

We detail the construction of $\A$ and $F$ by describing the tasks to be performed on the initial configuration. Each letter of $\A$ is a product of seven layers separated in three groups, each group representing some information needed to perform a given task. The alphabet of each layer contains a special \emph{blank} symbol $\Bl$ to denote the absence of information.

\begin{itemize}
  \item The first group is dedicated to the colonising of the configuration. Since we have no control over the contents of the initial configuration, we want to erase (almost) all symbols present initially in favour of various processes that we can control and synchronise. To do this, the \emph{birth layer} contains a \emph{seed} symbol $\sti$ that can only appear in the initial configuration. Each seed gives birth to a stationary \emph{heart} $\stp$ on the same layer, and to a \emph{membrane} on the \emph{growth layer} which grows in every direction. As it grows, the membrane erases everything in its path, except for other membranes issued from a seed with which it merges.
  \item The second group is used to divide the colonised space into mostly independent areas called \emph{organisms}, each organism having at its centre a heart issued from a seed. The borders between organisms are redefined regularly by processes on the \emph{organism layer}. Furthermore, organisms need to grow in size regularly, which is achieved by merging organisms whose hearts are close using the \emph{evolution layer}.
 \item The third group deals with the internal metabolism of the organisms. The goal is first to compute each $w_n$ in succession, which is achieved by simulating a Turing machine in the \emph{computing layer}; then, the main layer of the whole body of the organism is filled with concatenated copies of $w_n$ by using a copying process taking place on the \emph{copying layer}. The above is done synchronously in all organisms, at some time $t_n$ for each $w_n$.
\end{itemize}

Copies of $w_n$ are written on the \emph{main layer}, which implies that the corresponding alphabet is $\B\cup\{\Bl\}$. To sum up, the global alphabet is $\A=\Abirth\times\Agrowth\times\Aorga\times\Aevol\times\Acomp\times\Acopy\times(\B\cup\{\Bl\})$. We check that $\B\subset \A$ up to the bijection $b \mapsto (\Bl,\Bl,\Bl,\Bl,\Bl,\Bl,b)$. Denote $\pbirth$, $\pgrowth$, $\porga$, $\pevol$, $\pcomp$, $\pcopy$, $\pmain$ the projections on each coordinate.


During the description of $F$, we will treat each layer successively. The layers were introduced in order of dependency, in the sense that the time evolution of symbols in a given layer only depends on the contents of layers in the same group and the one immediately preceding it. Furthermore, the main layer is only affected by the copying layer.

\subsection{Space colonisation: Seeds and membranes}

In this section, we describe the cleaning of the configuration through the seeds and the birth, growth and fusion of membranes. We deal only with the birth layer and alphabet $\Abirth$ for the moment. Section~\ref{sec:CreationMyth} gives the general ideas of the process, while Section~\ref{sec:Implementation1} focuses on technical difficulties of the cellular implementation.

\subsubsection{Creation myth: a sketch}\label{sec:CreationMyth}

\paragraph{Seeds and hearts} The alphabet $\Abirth$ contains the seed symbol $\sti$, which can only appear in the initial configuration and cannot be produced by the local rule of $F$. At the first step, each seed spawns a number of processes and turns into a heart $\stp\in\Abirth$. This heart and those processes (and those spawned from them) are called \emph{initialised}, which means that their behaviour is well controlled and synchronised (since they are all born at time $1$). All other symbols are \emph{uninitialised}.

If two seeds are too close from each other ($d_\infty$ less than 5), the largest (in lexicographic order) is erased to give enough space to the other seed to spawn its processes. A seed that is not destroyed at time $1$ in this way is called \emph{viable}. By abuse of notation we write $\pbirth(c_x) = \sti^V$ to mean that the configuration $c$ has a viable seed at coordinate $x$.

\paragraph{Birth of membranes} Each occurrence of $\sti$ triggers the birth at time $1$ of a living \emph{membrane}. The membrane consists in membrane symbols $\memb$ and $\membco$ (and all their rotations) that form initially the surface of an hypercube of edge length $5$ centred on the seed. The membrane is oriented, being able to distinguish inside from outside through orientation vectors. 

\begin{definition}
  A membrane $m$ at time $t$ is a maximal $1$-connected set of coordinates containing membrane symbols $\memb$ or $\membco$ at time $t$ with consistent outward orientation; i.e., orientation of neighbouring membrane symbols differ in at most one coordinate, and at most by $1$.
\end{definition}

When a membrane $m$ forms a closed curve (which is the case for initialised membranes), we denote $\Supp(m)$ its support. In this case, $\Supp(m)$ partitions $\Z^d$ into a finite set $\overline{\Int}(m)$ and an infinite set $\Ext(m)$ which are $\infty$-connected. We also denote $\Int(m)=\overline{\Int(m)}\setminus\Supp(m)$. By "outward" in the previous definition, we mean that the orientation vectors of $m$ are directed towards $\Ext(m)$. 

If a membrane has a malformation that can be detected locally around a symbol (e.g. no neighbours, inconsistent orientations), a death process is spawned. Since uninitialised membranes can be locally well-formed, this is not enough to discriminate them from initialised membranes.

\paragraph{Breathing, growing, getting older} Each membrane symbol is associated with an \emph{age counter}, which is a binary counter initialised at $0$ and increasing at each step, whose aim is to keep track of the value of $t$. Notice that in an initialised membrane all age counters are equal. Figure~\ref{fig:membrane} represents some part of a membrane with arrows and counters.

From time $1$ onward, the membrane grows slowly towards the outside, erasing the content of other layers as it progresses with the exception of other membranes (see next paragraph). This is governed by the \emph{respiration process}: each time the age stored in its counter is the square of an integer, the membrane grows to the outside, making one step in every direction. Technical details related to the implementation of age counters and the respiration process are the object of the next section. 

\begin{figure}[htbp] \centering
  \includegraphics[width=6cm]{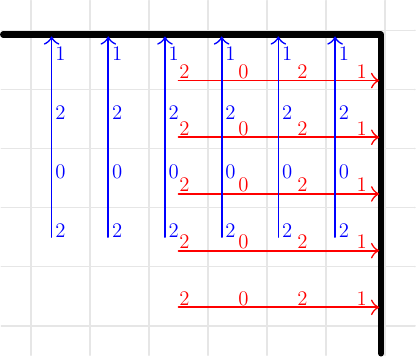}
  \caption{A corner of a membrane in dimension $2$. The arrows give the orientation and the counters store the age of the membrane.}
  \label{fig:membrane}
\end{figure}

 \paragraph{Fight for survival and death} When the growing membranes meet other membrane symbols, they try to determine locally whether they are part of an initialised membrane (in which case the two should merge), or some uninitialised symbols which should be erased. We call \emph{dead} an uninitialised group of membrane symbols that present some locally detectable malformation, such as non-connexity, the absence of or inconsistencies between age counters/respiration processes, inconsistencies between inner and outer orientation for neighbours, etc. In this case, the malformation generates a death signal $\death$ that spreads through the whole membrane erasing it. However, such a group can also form a \emph{zombie membrane}, that is apparently well-formed though uninitialised. Initialised (living) and zombie membranes are distinguished through age counters.

\begin{fact}\label{lem:minimal_age}
At time $t$, all age counters associated with a well-formed membrane have value at least $t-1$, the minimum being reached only for initialised membranes.
\end{fact}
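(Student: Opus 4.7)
The plan is a straightforward induction on $t$, using the fact that a well-formed membrane's age counters must be consistent with the respiration/update rules (otherwise a local malformation would be detectable). The key observation is the asymmetry between how counter values can enter the system: an initialised membrane is born at time $1$ with counter $0$ by construction, while any uninitialised membrane symbol must either be present already at $t=0$ (with some counter value $\geq 0$, since counters are nonnegative binary integers) or be created later by respiration from an uninitialised parent (in which case it inherits the parent's value).

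For the base case $t=1$, an initialised membrane symbol has counter $0 = t-1$. Any well-formed uninitialised membrane symbol present at time $1$ either existed at $t=0$ (so after one increment its counter is $\geq 1$) or cannot exist at all, since respiration from an uninitialised source requires such a source to exist at $t=0$, whose counter would also already be $\geq 0$ and so $\geq 1$ after one step. Hence uninitialised symbols at $t=1$ have counters strictly greater than $0$, as required.

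For the inductive step, I would use that well-formedness forces all counters in a membrane to increment by exactly $1$ at each step (otherwise a local inconsistency between neighbouring counters, or between a counter and the respiration phase, would appear and would be detected; such a membrane would then not be well-formed). Thus if at time $t$ every counter of a well-formed membrane is $\geq t-1$, at time $t+1$ each is $\geq t = (t+1)-1$. Moreover, any newly created membrane symbol (appearing through respiration) inherits the counter of its parent, so the minimum over the membrane cannot decrease in this way either. The equality $t-1$ at time $t$ therefore propagates from time $1$ and, by the base case, is reached only by membranes whose counter was $0$ at time $1$, i.e.\ initialised membranes.

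The only mildly delicate point is to argue that an uninitialised, well-formed membrane cannot "look like" a freshly-born initialised one — that is, that a well-formed configuration of membrane symbols with counter $0$ and correct orientation cannot appear at time $1$ without being born from a seed. This is handled by noting that respiration is the only mechanism producing new membrane symbols after $t=0$, and respiration inherits (never resets) the counter; hence a counter value of $0$ at time $1$ at a given site requires either the site to have held a $0$-counter membrane symbol at time $0$ (uninitialised, but then its value at $t=1$ is $1$ after the mandatory increment), or the site to have been occupied by a seed at $t=0$ (initialised case). This closes the induction.
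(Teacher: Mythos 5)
Your proof is correct and takes essentially the same approach as the paper: initialised counters start at $0$ at time $1$, uninitialised counters were already present at time $0$ (hence value $\geq 1$ at time $1$), and both are incremented by $1$ at each step. You simply make the induction and the role of respiration inheritance explicit, whereas the paper states the argument in one sentence.
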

Indeed, age counters of initialised membranes are initialised at $0$ at time $1$, while age counters of zombie membranes were already present (with a positive value) at time $0$, and both are incremented by $1$ at each step. 

\subsubsection{Implementation of age counters}\label{sec:Implementation1}

This section is dedicated to the details of the cellular implementation of the age counters in the growth layer. First we show how to implement age counters with binary counters using logarithmic space. $\Agrowth$ contains membranes symbols and age counter symbols. Each membrane symbol $\memb$ or $\membco$ contains an outward orientation label consisting of a vector of $\mathcal{U}nit(d)$. 


\paragraph{Basis and carry propagation} We use a redundant binary basis in all counters. Let $c = c_{n-1}\dots c_0 \in \{0,1,2,\overline{1}\}^n$ be a counter. The \textbf{value} of $c$ is $\sum_{i=0}^{n-1}c_i2^i$ (reverse order) where $\overline{1}$ has value $-1$. Since $2 = 10$, 2 can be seen as a $0$ with a carry, and $\overline{1}$ as a $0$ with a "negative" carry.

At each time step, carries are propagated along the counter, which can be done in a local manner ($02 \rightarrow 10, 12\rightarrow 20, \Bl2\rightarrow 10, 1\overline{1} \rightarrow 01, 0\overline{1}\rightarrow \overline{1}1$). Additional zeroes at the beginning of the counter are erased ($\Bl1\overline{1} \rightarrow \Bl1$).

In order to increment or decrement the counter by one, which is the case for age counters, the rule is adapted at the least significant bit of the counter (for incrementing: $0 \rightarrow 1, 1\rightarrow 2, 2\rightarrow 1$). 

\paragraph{Age counters} The age counters are implemented as follows. The least significant bit of each counter is next to its corresponding membrane symbol, and the following bits lie on a line directed towards the inside of the membrane. To each possible direction (corresponding to some $\pm e_j$) corresponds a different sublayer, which allows counters to cross near the corners. Thus the age counters use $2d$ sublayers, each sublayer containing symbols $\{\Bl, \overline{1},0,1,2\}$.

Recall that any inconsistency -- the absence of an age counter for some membrane symbols, parallel counters containing different symbols, etc. -- spawns a death process, which spreads in the whole membrane and erases all layers of these cells.

\subsubsection{The respiration process}

The goal of the respiration process is to govern a slow growth of the membrane. 

\paragraph{Breathing counters} Along with the age counter, on two other sublayers of $\Agrowth$, two counters $A$ and $B$ are initialised at time $0$ with values $1$ and $0$, respectively. From there on three phases alternate, the current phase being labelled on the membrane symbol:
\begin{description}
\item[Phase $+$:] $A$ is decremented while $B$ is incremented. When $A$ reaches $0$, the phase passes to $-$;
\item[Phase $-$:] $A$ is incremented while $B$ is decremented. When $B$ reaches $0$, the membrane breathes; 
\item[Breath:] For one step $A$ is incremented while $B$ is unchanged, then the phase passes to $+$.
\end{description}

The value of $A+B$ is constant during a cycle ($1$ for the first cycle) except for the last step where it is incremented by one. The cycle takes a total time $2(A+B)+1$. Therefore a breath occurs at each time $t^2$ for $t>1$. In Figure~\ref{fig:UpdateCounters} we represent the update operation of all three counters, that is, incrementing the age and updating $A$ and $B$ according to the phase.
 
 \begin{figure}[h]
\begin{center}
 $  \begin{array}{c|c|c|c|c|c|c}
 \quad t\quad &\ phase\ & \quad A\quad &\quad B\quad &\quad age\quad & 5+2\lfloor\sqrt{t}\rfloor\\
 \hline
 \hline
 1&+&1&0&0&5\\
 2&-&0&1&1&5\\
 3&-&1&0&2&5\\
 4&breath&2&0&11&7\\
 5&+&1\overline{1}&1&12&7\\
 6&-&0&2&21&7\\
 7&-&1&1\overline{1}&102&7\\
 8&-&2&0&111&7\\
 9&breath&11&0&112&9\\
 10&+&10&1&121&9
 \end{array}$
   \caption{ Values of the three counters for $t\leq 10$.} \label{fig:UpdateCounters}
   \end{center}
 \end{figure}

 \begin{lemma}
   \label{lem:counter_update}
   The counter update can be performed locally with radius $2$.
 \end{lemma}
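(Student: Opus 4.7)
My plan is to verify, rule by rule, that every part of the counter update fits within a radius-$2$ window. On the growth layer, each cell stores, on one of the $2d$ direction-sublayers, either a digit in $\{\Bl,\overline{1},0,1,2\}$ of one of the three counters age, $A$, $B$ (arranged along lines running inward from the membrane), or a membrane symbol carrying the current phase label.

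The core observation is that every listed rewrite acts on a window of at most three consecutive cells on a single sublayer. The carry-propagation rules $02\to 10$, $12\to 20$, $\Bl 2\to 10$, $1\overline{1}\to 01$, $0\overline{1}\to\overline{1}1$, and the cleanup $\Bl 1\overline{1}\to \Bl 1$ are all of this form, so the new value of any counter cell is determined by its two nearest neighbours on each side. The increment/decrement at the least significant bit is performed by the lsb cell itself: it detects its position from the adjacent membrane symbol, reads the phase label there to choose the sign (incrementing age always; $A$ in phases $-$ and breath, $B$ in phase $+$; decrementing otherwise), and applies $0\to 1$, $1\to 2$, $2\to 1$ (or its inverse), passing any overflow to the next cell through the propagation rules above. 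Since the $2d$ direction-sublayers and three counter sublayers update independently, the combined local rule still has radius $2$.

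The delicate point is the phase transition on the membrane, which must flip whenever $A$ or $B$ reaches $0$. The membrane cell determines its new phase by simulating one step of the lsb update of each counter on its radius-$2$ view; combined with its current phase, this fixes the new label. This is correct thanks to the invariant that at every transition moment the relevant counter is in canonical single-digit form, so the zero condition is visible inside the membrane's window. The main obstacle is establishing this invariant, which I would handle by a case analysis along the cycle of Figure~\ref{fig:UpdateCounters}: each decrement is implemented by a composite local rule combining propagation, subtraction, and leading-zero erasure in a single CA step, and a routine induction on the cycle structure shows that the final decrement collapses any representation of value $1$ (such as $1\overline{1}$) to a single $0$-digit in one step. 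Once this invariant is in place, the full counter update is the pointwise combination of finitely many radius-$2$ rewrites, as the lemma asserts.
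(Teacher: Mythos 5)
Your decomposition of the lemma is essentially the paper's: the carry-propagation rewrites and the lsb increment/decrement are radius-$1$ operations, and the only delicate point is that the membrane symbol must detect, within its bounded view, the moment when $A$ or $B$ reaches value $0$. Both you and the paper localise the whole difficulty in an invariant on the form the counter can take near that moment.

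The gap is that you state this invariant ("at every transition moment the relevant counter is in canonical single-digit form") and then claim it follows by "a routine induction on the cycle structure" which "collapses any representation of value $1$ (such as $1\overline{1}$) to a single $0$-digit in one step." That claim, as stated, is false: $\Bl\,1\,\overline{1}\,\overline{1}\,\cdots\,\overline{1}$ is also a representation of value $1$, and no radius-$2$ (indeed, no finite-radius) local rule can rewrite it to $\Bl\,0\,0\,\cdots\,0$ in a single step. The correct statement is not that \emph{every} representation of $1$ collapses, but that only two representations, $\Bl\,1$ and $\Bl\,1\,\overline{1}$, can actually arise during a decrementation phase, and those two do collapse to $\Bl\,0$ in one step. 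What makes only those two reachable is the structural observation the paper supplies and your sketch omits: during a decrementation the lsb alternates between $0$ and $\overline{1}$, and carries propagate at speed one, so two $\overline{1}$'s can never be adjacent. Without that speed-one invariant the "routine induction" has nothing to induct on, and the lemma would actually be false of the naive local rule. You need to isolate and prove that no-adjacent-negative-carries invariant explicitly; once it is in hand, your radius-$2$ bookkeeping for the membrane's phase transition goes through.
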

 \begin{proof}
   The incrementations and decrementations described above can be achieved with radius $1$. The least significant bit can be distinguished by being next to the membrane symbol which contains the information on the current phase and the most significant bit is next to a blank symbol (on its layer).

We show that the fact that a counter is worth $0$ is detectable with radius $2$ (to see why this is nontrivial, consider the update $\Bl{1}\overline{1}\overline{1}\dots \overline{1} \rightarrow \Bl 0 0\dots 0$). During a decrementation the least significant bit alterns between $0$ and $\overline{1}$. Since carries progress at "speed" one, two negative carries can never be next to each other. Therefore the only possible representations of $1$ are $\Bl{1}$ and $\Bl1\overline{1}$, and both yield $\Bl 0$ at the next step. Therefore detecting when the counter is worth $0$ requires radius two, in order for the membrane symbol to "see" the word $\Bl 0$.
%
\qed\end{proof}

\paragraph{Breathing process} Each breath makes the membrane progress by one cell on every direction, a process we detail below. If such a symbol is not produced synchronously by the whole membrane, then $\death$ signals spawn and spread to erase the membrane. 

Recall that each membrane symbol at coordinates $x$ is labelled with an outward growth direction, which is a vector $v=\sum\varepsilon_je_j\in\mathcal{U}nit$. The membrane symbol is the border between $\Int(m)$ and $\Ext(m)$, with the orientation vector indicating which part $\Ext(m)$ is. When a breath symbol appears, the membrane symbol is removed and new symbols are created on all cells of $\Ext(m)$ that were $\infty$-adjacent to any membrane symbol of $m$. The new orientation vectors are determined by remaining coherent with the old orientation.

The remaining task is to reproduce the counters for the new symbols.

First consider the case of a face symbol $x$ and orientation $e_j$. Right after a breath, when a new symbol is created at coordinate $x+e_j$, the symbol at $x$ is replaced by a placeholder symbol $s_{lim}$. This symbol progressively shifts the counters of $x$ by one cell in direction $e_j$, marking at each step the limit between the part which is to be shifted and the part already shifted. The counters keep updating by ignoring this symbol, which increases the radius to 3. Figure~\ref{fig:shift} illustrates the shift.

 \begin{figure}[htbp] \centering
  \includegraphics[width=6cm]{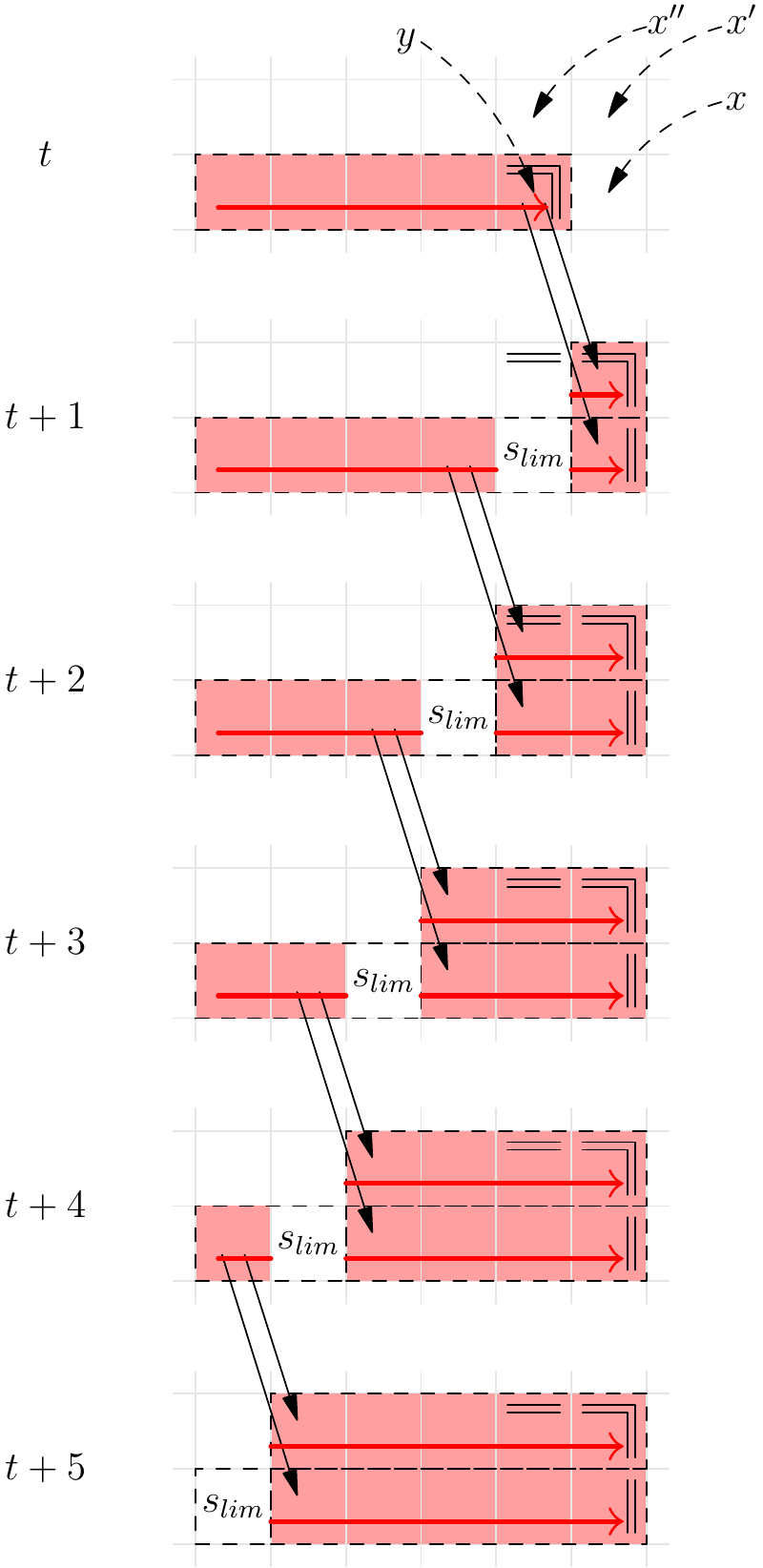}
  \caption{After a breath, a membrane corner symbol at cell $y$ is erased and new membrane symbols appear at $x$, $x'$ and $x''$.  For the sake of readability, the vertical counters are not represented, hence we draw only the new horizontal counters in $x$ and $x'$. At each step between $t+1$ and $t+4$, the red cells represent superposition of the age, $A$ and $B$ counters. They are copied  to the new membrane symbols, but the incrementation does not stop.}
  \label{fig:shift}
\end{figure}

From this section we deduce the following:
\begin{fact}
Each initialised membrane forms an hypercube of edge length $5+2\lfloor \sqrt{t}\rfloor$ at time $t$. 
\end{fact}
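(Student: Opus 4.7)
The plan is to induct on $t$ while carrying two invariants: (i) the support of the membrane is the surface of an axis-aligned $d$-hypercube centred on the seed, and (ii) across the whole membrane the age, $A$ and $B$ counters hold identical values. Invariant (ii) is what forces every membrane symbol to execute the same step of the respiration cycle at the same global time, and hence to breathe simultaneously; any asynchrony would be detected locally and spawn a death signal, contradicting the hypothesis that the membrane is initialised.

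For the base case $t=1$, the seed-spawning rule of Section~\ref{sec:CreationMyth} places membrane symbols on the surface of a hypercube of edge $5$ and sets identical counters ($\mathrm{age}=0$, $(A,B)=(1,0)$, phase $+$), so both invariants hold. For a non-breath time step, invariant (ii) combined with Lemma~\ref{lem:counter_update} ensures that every symbol updates its counters identically, so the support is unchanged and the invariants propagate. At a breath step, the transition illustrated in Figure~\ref{fig:shift} erases every old membrane symbol and creates new ones on each cell of $\Ext(m)$ that was $\infty$-adjacent to the old support; for an axis-aligned hypercube of edge $\ell$ this neighbourhood is exactly the surface of the concentric hypercube of edge $\ell+2$, and the outward-orientation rule keeps the orientations coherent. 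The placeholder $s_{\mathrm{lim}}$ then reproduces the counters on the new surface, restoring invariant (ii) with edge $\ell+2$.

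It remains to count breaths. From the cycle analysis (the value $A+B$ is constant during each cycle and grows by one at each breath, the cycle of $A+B=n$ having length $2n+1$), breaths occur precisely at times $t=k^2$ for $k\geq 2$. The number of breaths by time $t$ is therefore $\max(0,\lfloor\sqrt{t}\rfloor-1)$, which combined with the initial edge $5$ yields the edge length claimed in the statement (consistent with the table preceding Lemma~\ref{lem:counter_update}).

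The subtle part of the argument is the simultaneity and local correctness of the breath across the whole membrane. The redundant binary basis means that the predicate ``$B=0$'' admits several syntactic representations and, per Lemma~\ref{lem:counter_update}, requires radius $2$ to be recognised; moreover the $2d$ sublayers of age counters must be shifted coherently at the corners, where counters from different directions share the same cell. Once Lemma~\ref{lem:counter_update} and the $s_{\mathrm{lim}}$ shifting mechanism are taken for granted, the induction above closes immediately and the hypercube invariant (i) propagates through every breath.
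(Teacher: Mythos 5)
Your inductive argument — hypercube shape plus synchronised counters as invariants, breaths at perfect squares, edge growing by $2$ per breath — is the right reconstruction of what the paper leaves implicit: the paper states this fact with ``From this section we deduce the following'' and gives no explicit proof, so your write-up fills that in along exactly the intended lines.

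One small but real issue: your computation does not actually yield the formula in the statement. Breaths occur at $t=4,9,16,\dots$, so the number of breaths by time $t$ is $\max(0,\lfloor\sqrt{t}\rfloor-1)$, and the edge length is $5+2\max(0,\lfloor\sqrt{t}\rfloor-1)$, i.e.\ $3+2\lfloor\sqrt{t}\rfloor$ for $t\geq 4$. This matches the entries of the table in Figure~\ref{fig:UpdateCounters} (edge $5$ for $t\leq 3$, edge $7$ for $4\leq t\leq 8$, etc.) but is \emph{two less} than the stated expression $5+2\lfloor\sqrt{t}\rfloor$, which would give edge $7$ already at $t=1$. The discrepancy originates in the paper (the column header of that table disagrees with its own entries), but your closing sentence claims agreement with both the statement and the table, which cannot both hold; you should flag the off-by-one rather than assert consistency with a formula your derivation contradicts.

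Beyond that, your treatment of the breath step compresses the counter-shift into a single sentence. The membrane symbols do move in one step, so the shape invariant propagates instantaneously; but the counters lag behind and are moved progressively by $s_{\mathrm{lim}}$ while still being incremented. Invariant (ii) survives because every position of the membrane executes the identical shift in lock-step, so the counter \emph{values} stay equal even while their \emph{positions} are mid-shift. This is worth saying explicitly, since otherwise invariant (ii) looks like it could be transiently violated during the several steps following a breath.
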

Since the counters of an initialised membrane are initialised to $0$ at time $1$, the maximum size of the counters is $\lceil\log_2(t-1)\rceil$ at time $t$ and membranes have enough space to contain them. For technical reasons that will become clear later, we need to quantify the maximal number of breaths of any membrane symbol (not necessarily initialised) in a given time:
 
\begin{lemma}
  \label{lem:density_extensions}
 The number of breaths triggered by any membrane symbol between times $t$ and $t+k$ is at most $\lfloor\sqrt{t+k}\rfloor-\lfloor \sqrt{t}\rfloor$.

 \end{lemma}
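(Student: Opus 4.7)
The plan is to use the strictly increasing cycle lengths of the respiration process. From the update rule and Figure~\ref{fig:UpdateCounters}, cycle $n$ (during which $A+B=n$) lasts exactly $2n+1$ steps, and a breath event coincides with the completion of such a cycle. Consequently, if a membrane symbol starts from cycle index $n_0$, the $m$-th subsequent breath cannot occur earlier than $\sum_{i=0}^{m-1}(2(n_0+i)+1) = m^2+2n_0m$ steps later.

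I then invoke Fact~\ref{lem:minimal_age}: the age counter of any surviving membrane symbol at time $t$ has value at least $t-1$, and the age counter advances by one at each time step synchronously with the breathing counters. A straightforward invariant (proved by induction on the completed cycles, using that cycles $1,\ldots,n-1$ consume a total of $n^2-1$ steps) relates the age counter to the cycle index: if the age at time $t$ is $a_t$, then the current cycle index $n_0$ satisfies $n_0^2-1 \leq a_t$. Combined with $a_t \geq t-1$, this gives $n_0 \geq \lfloor\sqrt{t}\rfloor$, with equality attained only for initialised membranes.

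Putting these two ingredients together, the number $m$ of breaths in the interval $(t,t+k]$ satisfies $m^2+2n_0m \leq k$, equivalently $m \leq \sqrt{n_0^2+k}-n_0$. Since the right-hand side is non-increasing in $n_0$, substituting the lower bound $n_0\geq \lfloor\sqrt{t}\rfloor$ yields $m \leq \sqrt{t+k}-\lfloor\sqrt{t}\rfloor$. Because $m$ is a non-negative integer, the inequality $m+\lfloor\sqrt{t}\rfloor\leq\sqrt{t+k}$ forces $m+\lfloor\sqrt{t}\rfloor\leq\lfloor\sqrt{t+k}\rfloor$, i.e., $m \leq \lfloor\sqrt{t+k}\rfloor - \lfloor\sqrt{t}\rfloor$, which is the announced bound.

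The main obstacle is reconciling the two counter systems -- age and $(A,B)$ -- for zombie (uninitialised but well-formed) membranes, whose initial configuration need not match any history of the local rule. I would handle this by formalising a notion of \emph{virtual initialisation time} $t_0 \leq 1$: the local consistency checks on the counters force the contents at time $0$ to be obtainable by running the breath/age update rules backwards from a well-defined initial state at time $t_0 \leq 1$, so that in particular the cumulative cycle time equals $a_t - (t_0-1)$ at time $t$, preserving the inequality $n_0^2-1 \leq a_t$. Fact~\ref{lem:minimal_age} then closes the argument uniformly over initialised and zombie membranes, with some care needed for the corner cases where the current cycle is almost complete at time $t$.
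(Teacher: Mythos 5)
Your proposal has two internal gaps, and they happen to combine in a way that hides a flaw present in the lemma statement itself (which the paper's proof also shares).

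First, the deduction ``$n_0^2-1\leq a_t$ combined with $a_t\geq t-1$ gives $n_0\geq\lfloor\sqrt{t}\rfloor$'' does not follow: the invariant $n_0^2-1\leq a_t$ bounds $n_0$ \emph{from above} in terms of the age, and combined with a lower bound on $a_t$ it places no lower bound on $n_0$ at all (large $a_t$ with $n_0=1$ satisfies both premises). What you need is the other half of the age--cycle relation: during cycle $n_0$ the age satisfies $n_0^2-1\leq a_t\leq(n_0+1)^2-2$, and it is the \emph{upper} bound, together with $a_t\geq t-1$, that yields $(n_0+1)^2\geq t+1$ and hence $n_0\geq\lfloor\sqrt{t}\rfloor$. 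Second, the timing count ``the $m$-th subsequent breath cannot occur earlier than $\sum_{i=0}^{m-1}(2(n_0+i)+1)=m^2+2n_0 m$ steps later'' tacitly assumes cycle $n_0$ is just beginning at time $t$. For a zombie the current cycle can be almost complete at time $t$, so the first breath may occur one step after $t$; the correct lower bound on the time to the $m$-th breath is $1+\sum_{i=1}^{m-1}(2(n_0+i)+1)=m^2+2n_0(m-1)$, which is $2n_0$ smaller.

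With the corrected timing bound one only gets $m\leq\sqrt{(n_0+1)^2+k-1}-n_0$, and substituting $n_0=\lfloor\sqrt{t}\rfloor$ no longer yields $m\leq\lfloor\sqrt{t+k}\rfloor-\lfloor\sqrt{t}\rfloor$. In fact the lemma as stated is false: take $t=4$, $k=1$ and a zombie whose counters at time $0$ coincide with those of an initialised symbol at time $t_0=4$ (namely $A=2$, $B=0$, age $=3$). That zombie breathes at time $5$ (since $5+4=3^2$), giving one breath in the interval, whereas $\lfloor\sqrt{5}\rfloor-\lfloor\sqrt{4}\rfloor=0$. The paper's proof has the same gap: it asserts $\lfloor\sqrt{t+t_0+k}\rfloor-\lfloor\sqrt{t+t_0}\rfloor\leq\lfloor\sqrt{t+k}\rfloor-\lfloor\sqrt{t}\rfloor$ for every $t_0\geq 0$, and the same numbers refute it. A correct (and still sufficient for Lemma~\ref{lem:extensions}) bound is the breath count for the appropriate $t_0\geq 0$, namely $\lfloor\sqrt{t+t_0+k}\rfloor-\lfloor\sqrt{t+t_0}\rfloor\leq\lceil\sqrt{t+k}-\sqrt{t}\rceil$, which for $k=3\lceil\log(t-1)\rceil$ and $t$ large still gives at most one breath during a comparison.
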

 \begin{proof}
Apart from time $0$ (when a breath symbol could be present), a breath is only triggered when the B counter of a membrane symbol without local malformations reaches $0$. This symbol must be issued from a seed or from a membrane symbol already present at time $0$. In the first case, since a breath is triggered at each step when the time $t$ is the square of an integer (except for $1$), the number of breaths before time $t$ is $\lfloor\sqrt{t}\rfloor-1$. The lemma follows.

In the second case, the membrane symbol had at time $0$ counters $A$ and $B$ with some positive values $a_0$ and $b_0$ and some phase $\varepsilon_0$, values which correspond to those of an initialised set of counters at some time $t_0>0$. From there on the time evolution of the membrane symbol is similar to the evolution of an initialised membrane symbol of age $t_0+t$, which means that the number of breaths between times $t$ and $t+k$ is $\lfloor\sqrt{t+t_0+k}\rfloor - \lfloor\sqrt{t+t_0}\rfloor \leq \lfloor\sqrt{t+k}\rfloor-\lfloor \sqrt{t}\rfloor$.
%
\qed
 \end{proof}

 \paragraph{Forming colonies} For an initial configuration $c$, define $M_t(c)$ to be the set of initialised membranes at time $t$. Then the \emph{colonised space} at time $t$ is:
\[\Col_t(s)=\bigcup_{m\in M_t(c)}\overline{\Int}(m).\] When a membrane grows, it erases the content of every other layer of the cells it encounters, except when the birth layer contains the outer border of a membrane. In this case, the comparison process starts, which is the topic of the next section. 

To sum up, the alphabet $\Abirth$ contains seeds and hearts, and $\Agrowth$ contains the states used for membranes (including counter sublayers). As we will see in the next section, $2^{d}$ different membranes can share the same cell, so this alphabet will be duplicated this many times.
  
\subsubsection{Survival of the youngest}
As membranes grow and tend to cover the whole space, different membranes eventually meet. The result of the encounter should depend on the nature of the membranes: two initialised membranes should merge while an initialised membrane should erase an uninitialised membrane (what happens between uninitialised membrane is irrelevant). In this section, we devise a comparison process to distinguish initialised from uninitialised membranes, using the growth layer and its alphabet $\Agrowth$.

From Fact~\ref{lem:minimal_age} we know that initialised membranes have the youngest age counters, and only tie with other initialised membranes. The value of the age counters are compared to let the younger membrane survive, with merging occurring in case of equality.

\paragraph{Many membranes meeting} When we say that two membranes $m$ and $m'$ meet at time $t$ in cells $x$ and $x'$, we mean that there exists $1\leq j\leq d$ such that:
\begin{itemize}
\item either $x\in\Supp(m)\cap \Supp(m')$, $x+e_j\in\Ext(m)$ and $x-e_j\in\Ext(m')$, in which case take $x'=x$;
\item or $x\in\Supp(m)\cap \Ext(m')$ and $x+e_j\in\Supp(m')\cap\Ext(m)$, in which case take $x'=x+e_j$.
\end{itemize}
The two possible situations are illustrated in Figure~\ref{fig:meeting}.

\begin{figure}[htbp] \centering
  \begin{tabular}{ccc}
    \includegraphics[height=4cm]{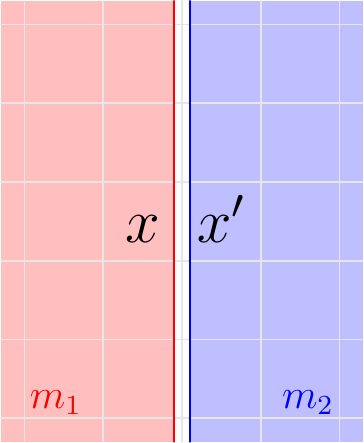} &
    \hspace{2cm}&
    \includegraphics[height=4cm]{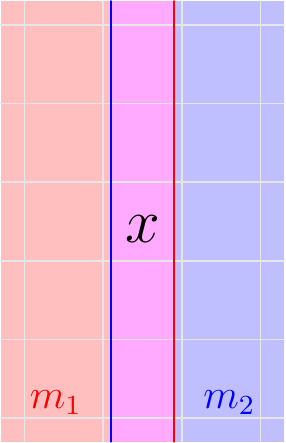}\\
    \end{tabular}
  \caption{Depending on the parity of the distance between membranes, they will meet either when they share some border cells ($x'=x$), or when the borders are adjacent($x'=x+e_j$).}
  \label{fig:meeting}
\end{figure}

In particular, the membranes arriving from opposite directions, they have (at least) an age counter in opposite directions, say $e_j$ and $-e_j$. These age counters are to be copied on a dedicated sublayer of $\Agrowth$ and compared. 

\paragraph{Copying phase} At positions $x$ and $x'$, two symbols $\cun$ and $\cunde$ are written on the growth layer to trigger the process (if $x=x'$, a symbol $\cundemi$ represents the superposition of those symbols) and progress at speed one in the corresponding direction, copying at each step one bit from the age counter to a sublayer of $\Agrowth$. Carries $2$ are copied as $0$: indeed, the copy is performed at the same speed as the carry progresses, so it would be copied at each step otherwise (the carry is taken into account once it turns a $0$ into a $1$). More generally, only carries that appeared before the beginning of the copy can influence the copied counter, which is not incremented. Thus the copied counter have the same value as the age counter at the beginning of the copy. When it reaches the end of its counter, each copy symbol turns into a comparison symbol $\cde$ (resp. $\cdede$), which triggers the comparison phase. 

\paragraph{Comparison phase} The comparison symbols return towards the meeting point, "pushing" the copied bits in front of them in a caterpillar-like movement, starting from the most significant bit. The returning bits use another sublayer of $\Agrowth$. The process is represented in Figure~\ref{fig:compar}.

\begin{figure}[htbp] \centering
 
    \includegraphics[width=8cm]{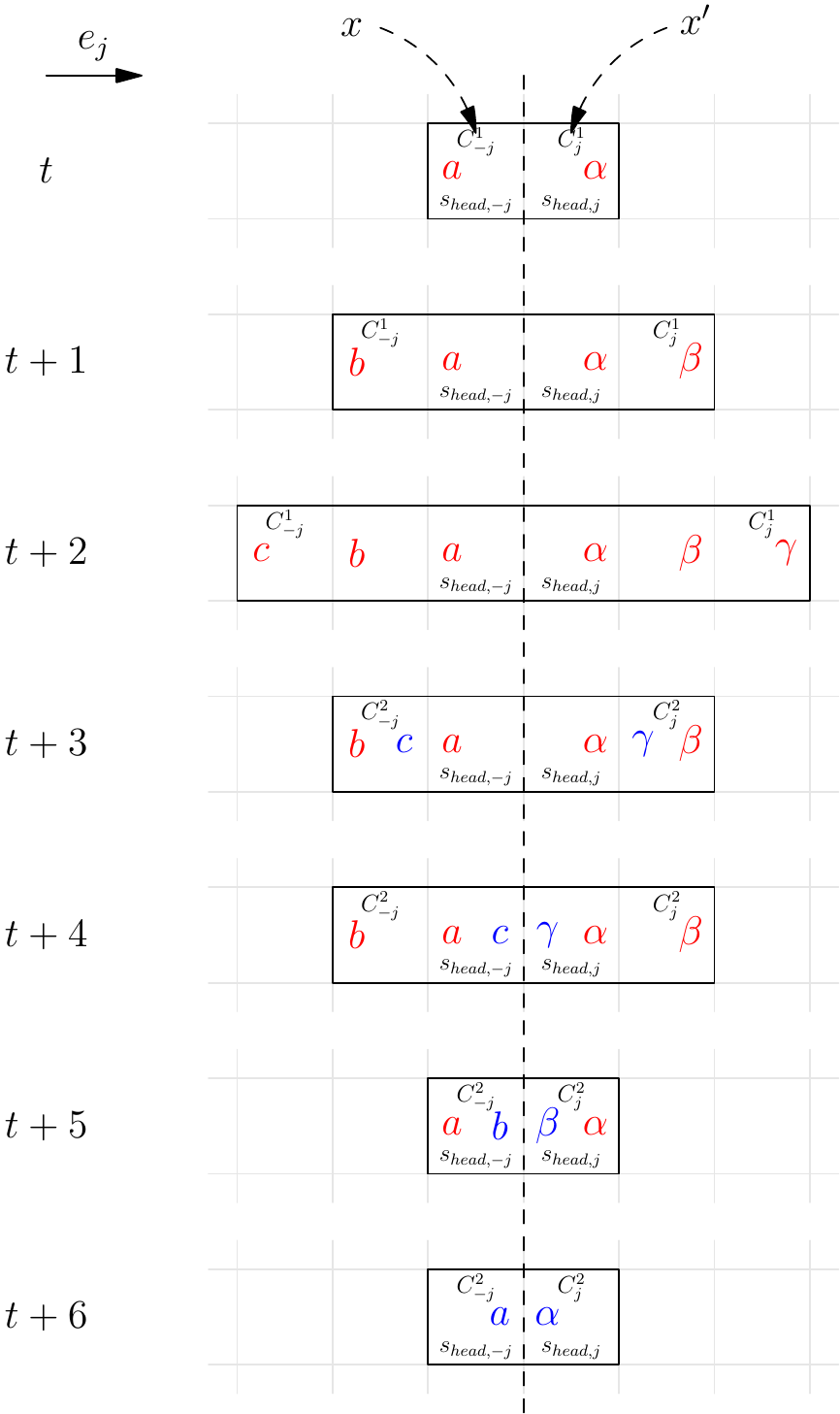}
  \caption{In this example, two membranes meet in cells $x$ and $x'$ at time $t$. Their age counters are respectively $abc$ and $\alpha\beta\gamma$ of same length $3$. Only the growth layer is represented. At the end ($t+6$), the decision can be made in both $x$ and $x'$. In this particular case, the symbols $s_{head}$ have not been moved, which means neither of the membranes did extend during the comparison.}
  \label{fig:compar}
\end{figure}
 
 As the returning bits reach the meeting point, one of the following situations occur:
 \begin{itemize}
 \item the most significant bit from one side arrives earlier than the most significant bit from the other side. In this case the age counter of the corresponding side is shorter, which means that the membrane of this side is younger;
 \item both most significant bits arrive simultaneously at the meeting points $x$ and $x'$. Then bits are compared as they arrive. The first bit that is smaller than its counterpart corresponds to the side of the younger membrane;
 \item in the previous case, if all bits are equal until the end, both membranes have exactly the same age.
 \end{itemize}

Those three possibilities are tested locally at the meeting point and the result is written as a symbol (on its own sublayer) marking the direction of the younger membrane, with $=$ in case of a tie. If for some reason a symbol $\death$ (death process) reaches the symbol of one of the sides, the comparison stops and the surviving membrane is marked as younger "by default".

If a membrane is declared younger, all auxiliary symbols used for comparison are erased and a death process triggers in the older membrane. The younger membrane will resume its growth naturally at the next breath. If the result is a tie, both membrane symbols are erased along with all associated auxiliary states: the membranes are merged. 

\begin{remark}\label{rem:comparison}~
   
   \begin{itemize}
   \item In general two membranes may have more than one meeting point. In that case, comparisons are performed simultaneously at every point and in every concerned direction. In the case of a tie, all symbols participating in the meeting are erased simultaneously; any local discrepancy results in the spawn of a death process.

   \item At most $2^d$ different membranes can meet in the same cell $x$ (corners of hypercubes arriving from all possible directions). To solve this problem, we duplicate each sublayer (in $\Abirth$ and $\Agrowth$) used in the comparison into $2^d$ copies, each copy being able to perform a comparison independently of the others. If the membrane is older than at least another membrane, a death process is spawned; it merges if it is tied for youngest.
   \item Let $\ell$ be the length of the shortest age counter. The previous process needs $\ell$ steps to copy this age counter on the growth layer, and $2\ell$ steps to send them one by one to the meeting point. Regardless of the length of the other counter, the comparison reaches a result after the last bit of the shortest counter arrives. Therefore the whole process takes at most $3\ell$ steps. Remember that $\ell = \lceil\log{t-1}\rceil$ if one of the membranes is initialised (assuming the comparison process began at time $t$).
   \end{itemize}
\end{remark}

\paragraph{Breathing during comparisons} We did not take into account the possibility that one of the membranes breathes during the comparison. For each meeting of a pair of membranes, call \emph{instigating membrane} the one whose breath has triggered the meeting (possibly both if they moved simultaneously; this is the case for initialised membranes).
 
\begin{lemma}
  \label{lem:extensions}
Let $m$ be a living membrane meeting another membrane at time $t$. During the comparison process, $m$ may breath at most one time if it is not instigating, and cannot breath at all if it is instigating.

\end{lemma}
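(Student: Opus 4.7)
The plan is to combine two tools established earlier in this section: Remark~\ref{rem:comparison}, which bounds the duration of the comparison process, and Lemma~\ref{lem:density_extensions}, which bounds the density of breaths. Since $m$ is living, and hence initialised, Remark~\ref{rem:comparison} gives that the comparison lasts at most $3\ell$ steps, where $\ell = \lceil\log_2(t-1)\rceil$ is the length of the shortest age counter involved.

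Applying Lemma~\ref{lem:density_extensions} to $m$ with $k = 3\ell$, the number of breaths of $m$ in the window $[t, t+3\ell]$ is at most $\lfloor\sqrt{t+3\ell}\rfloor - \lfloor\sqrt{t}\rfloor$. The key elementary estimate is that $3\lceil\log_2(t-1)\rceil < 2\sqrt{t}+1$ for all $t$ above a small explicit threshold, which forces $\lfloor\sqrt{t+3\ell}\rfloor \leq \lfloor\sqrt{t}\rfloor + 1$. Thus at most one breath of $m$ can occur during the window, which already yields the non-instigating bound.

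In the instigating case, $m$ itself breathed at time $t$. Following the argument from the proof of Lemma~\ref{lem:density_extensions}, the breath-cycle counters of $m$ at time $t$ coincide with those of an initialised membrane of age $t+t_1$ for some $t_1\geq 0$, with $t+t_1$ a perfect square. The next breath of $m$ therefore occurs only at time $t+2\sqrt{t+t_1}+1\geq t+2\sqrt{t}+1 > t+3\ell$, by the same estimate. Hence no further breath fits into the comparison window after the instigating one.

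The main obstacle is the small-$t$ regime where the inequality $3\lceil\log_2(t-1)\rceil < 2\sqrt{t}+1$ fails; there are only finitely many such $t$, and in each case the claim can be checked by direct inspection (the counters are very short, so the comparison process is very brief), or ruled out by observing that the geometric setup---seeds pairwise at $d_\infty$-distance at least $5$---prevents meetings of initialised membranes from occurring before a small minimum time at which the estimate already holds.
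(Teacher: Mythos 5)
Your proposal is correct in substance and follows the same overall strategy as the paper (bound the comparison duration via Remark~\ref{rem:comparison}, then cap the number of breaths via Lemma~\ref{lem:density_extensions}), but the paper makes a cleaner choice of time window that handles both cases at once. The paper applies Lemma~\ref{lem:density_extensions} to the window $[t-1,\, t+k]$, i.e.\ starting one step \emph{before} the meeting. Since the instigating breath occurs at time $t-1$ (not $t$, as you wrote: the breath produces the new membrane symbol that is only detected as a meeting at time $t$), including $t-1$ in the window means the single allowed breath is already accounted for by the instigating one, and no additional case analysis is needed. Your window $[t, t+3\ell]$ excludes the instigating breath, which forces you to argue separately that the next breath after the instigating one falls outside the comparison window --- correct but redundant, and it also introduces the minor slip about when the instigating breath happens.

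Your second paragraph's concern about the small-$t$ regime is fair and worth raising: the final $\leq 1$ in the paper's chain $\left\lceil \frac{3\log(t-1)+1}{2\sqrt{t-1}}\right\rceil \leq 1$ genuinely fails for moderately small $t$ (roughly $t\lesssim 100$), and the paper leaves this unaddressed. Your suggested repair (only finitely many exceptional $t$, handled either by inspection or by arguing that initialised membranes cannot meet before some minimum time) is the right idea, though as stated it is informal; a fully rigorous fix would require pinning down that minimum meeting time from the $d_\infty\geq 5$ separation of viable seeds and the $5+2\lfloor\sqrt{t}\rfloor$ growth rate. So: your argument is essentially sound, slightly less economical than the paper's, and your observation about small $t$ is a legitimate gap in \emph{both} versions that you correctly flag but do not fully close.
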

\begin{proof}
  Using the above remark, we know that the comparison process takes at most $k=3\lceil\log{(t-1)}\rceil$ steps. Using Lemma~\ref{lem:density_extensions}, the number of breaths of $m$ between times $t-1$ and $t+k$ is at most:
  \begin{align*}
    \left\lfloor\sqrt{t+k}\right\rfloor-\left\lfloor \sqrt{t-1}\right\rfloor
    &\leq \left\lceil\sqrt{t+k}- \sqrt{t-1}\right\rceil\\
    &\leq \left\lceil \frac{k+1}{2\sqrt{t-1}}\right\rceil\quad (\text{since the derivative of }\sqrt{t-1}\text{ is decreasing})\\
    &\leq \left\lceil \frac{3\log{(t-1)+1}}{2\sqrt{t-1}}\right\rceil \leq 1.
    \end{align*}
If $m$ is instigating, then by definition $m$ breathed at time $t-1$ and cannot breath again before time $t+k$. Otherwise, $m$ breathes at most one time.
\qed\end{proof}

Therefore, if one or both membranes move during the comparison because of the respiration process, it writes a head symbol $s_{head,j+}$ to recall its new position. If a membrane extends more than twice before the end of the comparison, a death process is triggered for this membrane. As the radius of $F$ is more than $2$, the moved membrane can still read the result of the comparison.

\begin{lemma}
  \label{lem:life_supremacy}
Take any $t>0$ and initial configuration $c\in\azd$. Then:
\[\Col_t(c)=\{x\in\Z^d:\exists y\in\Z^d, \dinf(x,y)\leq 1+\sqrt{t}, \pbirth(c_y)=\sti^V\}.\]
In other words, the colonised space at time $t$ is exactly the set of cells that, at time $0$, are at distance less than $1+\sqrt{t}$ from a viable seed.
\end{lemma}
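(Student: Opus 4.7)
The plan is to prove two inclusions, both resting on the observation that an isolated initialised membrane born from a viable seed $y$ at time $1$ forms, at time $t$, a hypercube centred on $y$ whose $\overline{\Int}$ is exactly $\{x : \dinf(x,y) \leq 1+\lfloor\sqrt{t}\rfloor\}$: this comes from the initial edge length $5$ (radius $2$) together with one additional cell of growth per breath, and breaths occurring precisely at times $4,9,16,\dots$, i.e. $\lfloor\sqrt{t}\rfloor-1$ breaths by time $t$. Since $\dinf$ takes integer values, the condition $\dinf(x,y)\leq 1+\sqrt t$ in the statement is equivalent to $\dinf(x,y)\leq 1+\lfloor\sqrt t\rfloor$, so both sides of the claimed equality can be compared cell-by-cell using this integer bound.

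For the inclusion $\supseteq$, I would take a viable seed $y$ and a cell $x$ with $\dinf(x,y)\leq 1+\sqrt t$, and argue that $x$ lies in $\overline{\Int}(m)$ for some $m\in M_t(c)$. The seed spawns an initialised membrane at time $1$ whose unhindered growth schedule is described above. The only events that could perturb this growth are meetings with other membranes: by Fact~\ref{lem:minimal_age} every non-initialised competitor carries strictly older age counters and is therefore destroyed by the comparison process, while a meeting with another initialised membrane produces a tie and the two merge. Both outcomes leave $y$'s original hypercube inside the interior of a surviving member of $M_t(c)$.

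For the inclusion $\subseteq$, pick $x\in\overline{\Int}(m)$ for some $m\in M_t(c)$. Tracing $m$ backwards through the sequence of merges that produced it gives a finite family of viable seeds $y_1,\dots,y_k$ such that $\overline{\Int}(m)$ equals the union of the hypercubes $\{x:\dinf(x,y_i)\leq 1+\lfloor\sqrt t\rfloor\}$ produced by each seed in isolation; the meeting membrane symbols that are erased upon merging simply become interior cells, without moving the outer boundary. Consequently $x$ lies in one of these hypercubes and is at distance at most $1+\sqrt t$ from the corresponding viable seed.

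The main obstacle will be justifying cleanly that merges do \emph{not} accelerate growth, so that from each constituent seed the outer boundary of the merged object never exceeds $1+\lfloor\sqrt t\rfloor$. This requires invoking Lemma~\ref{lem:density_extensions} to control the total number of breaths along the way and Lemma~\ref{lem:extensions} to ensure that comparison phases allow at most one extra breath per non-instigating membrane, which is still compatible with the hypercube bound above. Once this uniform control is in place, the two inclusions follow, yielding the claimed equality.
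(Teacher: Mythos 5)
Your proposal takes a genuinely different route from the paper's. The paper proceeds by a direct induction on $t$: it establishes $\Col_1(c)$ explicitly, observes that $\Col_t(c)\subset\Col_{t+1}(c)$ and that merging cannot add cells (the merged colony is exactly the union of the two constituent colonies), so only breaths enlarge the colonised space; the induction step is then trivial when $t+1$ is not a perfect square, and when it is a square the boundary advances by exactly one cell in every direction. Your two-inclusion, union-of-hypercubes argument is geometrically appealing and in principle equivalent, but it front-loads the hard part — that the merged interior is \emph{exactly} the union of the constituent hypercubes — whereas the paper's time-induction makes that fact nearly automatic at each step.

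Where I think you go astray is in the tools you propose for the ``main obstacle.'' Lemma~\ref{lem:density_extensions} gives only an upper bound on the number of breaths of an arbitrary (possibly uninitialised) membrane symbol, while for your argument you need the \emph{exact} breathing schedule of initialised membranes; and Lemma~\ref{lem:extensions} concerns whether a comparison can finish before the next breath, not whether a breath is added or suppressed. Neither is what controls the merged object's growth. The correct and much simpler justification is structural: all initialised membranes were born at time $1$ and carry identical age counters, so their respiration processes are synchronised and every one of them breathes at exactly the times $4,9,16,\dots$; a merge simply erases the membrane symbols at the meeting locus (turning them into interior cells) and introduces no new boundary cells. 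From this the union-of-hypercubes invariant is preserved under merging for free, and your two inclusions go through. If you replace the appeals to Lemmas~\ref{lem:density_extensions} and~\ref{lem:extensions} with this synchronisation observation, your proof is correct.
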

\begin{proof}
We prove this result by structural induction. If $t=1$, then the colonised space is the set of all initialised membranes which are hypercubes of edge side $5$ around each viable seed, and the result is proved.

Now suppose that the hypothesis holds at time $t$. Notice than $\Col_t(c)\subset \Col_{t+1}(c)$, and that merging does not add any cell to the colonised space: an initialised membrane cannot be erased, and the colony obtained after merging two membranes is the union of the colonies defined by the two merged membranes. Only the breathing process may add new cells to the colonised space.

Consequently, the induction step is empty if $(t+1)$ is not a square since $\Col_t(c) = \Col_{t+1}(c)$ and $\dinf(x,y)\leq 1+\sqrt{t} \Leftrightarrow \dinf(x,y)\leq 1+\sqrt{t+1}$ (distances are integers). If $(t+1)$ is a square, then all membrane symbols in initialised membranes take a breath and extend by one cell towards the outside. Now, take a cell $y$ at distance $1+\sqrt{t+1}$ from the nearest viable seed. By the induction hypothesis, $y\notin \Col_t(c)$, but $y$ has a neighbour $y+v$ with $v\in\mathcal{U}nit$ at distance $1+\sqrt{t}$ from that seed, so that $y+v\in\Col_t(c)$. Therefore $y+v$ must be a membrane symbol in the support of an initialised membrane that breathes at time $t+1$, and therefore $y\in \Col_{t+1}(c)$. Conversely, if a cell $z$ is at distance greater than $1+\sqrt{t+1}$ from the nearest viable seed, it cannot have a membrane symbol belonging to an initialised membrane as a neighbour, so that $z\notin \Col_{t+1}(c)$.

\qed\end{proof}

\subsection{Colonies: evolution of the population}
\label{sub:colonies}
 From Lemma~\ref{lem:life_supremacy} we can see that only the contents of the colonised space matter asymptotically. In this section, we describe the interaction of organisms inside colonies. In all the following, we assume we are inside a colony, and the support of the surrounding membrane acts as an impassable wall for any symbol in the second group layers: the organism and evolution layers.

\subsubsection{Hearts and organisms}

As we saw, seeds $\sti$ at time $1$ spawn a membrane and turn into hearts $\stp$. Each heart will be the centre of an \emph{organism} which is itself a subset of the colony. At first each colony have only one heart, but as initialised membranes merge together, the colonies become multi-hearted, and the colony space is partitioned into organisms (except possibly a negligible part). For various reasons, the size of the organisms should grow in a controlled way, which requires some hearts to be progressively removed.

In the present section, we present the cycle of division of colony space and life of the organisms, and all symbols presented here belongs to $\Aorga$.

The life of an organism consists in a succession of generations. We introduce a sequence of times $(t_n)_{n\geq 1}$ (to be fixed later), marking the limit between the $n-1$-th and $n$-th generation. Time is tracked by the heart through a binary \emph{time counter} similar to age counters, initialised at $1$ at time $1$ (along with the heart) and remaining stationary next to the heart. Details on the implementation and the way to determine when $t=t_n$ will be given in Section~\ref{sec:Computing}.

\paragraph{Organisms expanding} At each time $t_n$, \emph{organism-building signals} spread from every heart, progressing as membrane symbols but with speed $1$ (although they do not carry any counters). While progressing, they erase the old contents of the second and third group layers except for the main layer. When they meet a membrane or another organism-building signal, they vanish leaving behind a \emph{neutral border symbol} $\std$. For parity reasons, if two signals emitted by hearts in $x$ and $x'$ arrive simultaneously in two neighbour cells $y$ and $y'$, they leave behind two \emph{pseudo-border symbol} $\stdp$ with an orientation vector towards the interior of their organism, that is, the direction opposite to the initial organism-building signal. 
Just as membrane symbols, $3^d-1$ different organism-building symbols and pseudo border symbols are required (one for each orientation).

The \emph{territory} of a heart is the maximal set of $1$-connected cells containing the heart and no neutral border symbol $\std$ nor pseudo border symbol pointing towards another organism; in other words, the set of cells reached first by organism-building signals emitted by this heart. At time $t_n+k$ (assuming $t_n+k<t_{n+1}$), the only cells of the colony that are not part of some organism are either at distance more than $k$ from the nearest heart, or were outside the membrane at time $t_n$ (and a breath had included them since). 

\begin{fact}
Let $x$ be a cell containing a heart at time $t$ with $t_n\leq t\leq t_{n+1}$, and let $y$ be a cell in its territory. Then the organism-building signal emitted by $x$ reached $y$ at time $t_n+\dinf(x,y)$ and no other organism-building signal reached a neighbour of $y$ before that time.
\end{fact}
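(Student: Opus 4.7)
The plan is to prove both claims simultaneously by induction on $k = \dinf(x,y)$, leveraging two invariants about organism-building signals: (i) signals emitted at time $t_n$ propagate at speed $1$ in the $\dinf$ metric, hence reach a cell $z$ no earlier than time $t_n + \dinf(x_0,z)$ where $x_0$ is the originating heart (speed-$1$ lower bound); and (ii) in the absence of collision or border symbol along a $\dinf$-shortest path from $x_0$ to $z$, the signal reaches $z$ at exactly time $t_n + \dinf(x_0,z)$ (speed-$1$ upper bound, from the rule ``progress as membrane symbols with speed $1$'').

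The base case $k=0$ is immediate: then $y=x$ carries the heart present since time $t_n$ and no signal has travelled yet, so no $\dinf$-neighbour of $y$ has been reached by any signal strictly before $t_n$. For the inductive step, assume the statement for all territory cells at $\dinf$-distance less than $k$, and let $y \in \text{territory}(x)$ with $\dinf(x,y) = k \geq 1$. Suppose, towards a contradiction, that a signal emitted by another heart $x' \neq x$ reaches some $\dinf$-neighbour $y'$ of $y$ at a time $s < t_n + k$. By invariant (i), $\dinf(x',y') \leq s - t_n < k$, hence by the triangle inequality $\dinf(x',y) \leq \dinf(x',y')+1 \leq k$.

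If $\dinf(x',y) < k$, then by invariant (i) applied to $x'$ the signal from $x'$ reaches $y$ at time at most $t_n + \dinf(x',y) < t_n + k$, strictly before $x$'s signal could; so $y$ is either absorbed into $x'$'s territory or turned into a neutral border $\std$ upon collision with $x$'s signal, contradicting $y \in \text{territory}(x)$. If $\dinf(x',y) = k$, then the signals from $x$ and $x'$ converge near $y$ simultaneously at time $t_n + k$; by the construction of collisions, this deposits either a neutral border $\std$ at $y$ (if both signals meet at $y$) or a pseudo-border $\stdp$ oriented away from $x'$ at $y$ (if they meet in adjacent cells), each outcome again incompatible with $y \in \text{territory}(x)$. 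Hence no other signal reaches any neighbour of $y$ before time $t_n+k$; combined with invariant (ii) applied to a $\dinf$-shortest path from $x$ to $y$ (whose intermediate cells are themselves in $\text{territory}(x)$, by applying the same argument or the inductive hypothesis to them), this shows that $x$'s signal reaches $y$ at time exactly $t_n + k$.

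The main obstacle is the case analysis of simultaneous arrivals, which depends on the parity of $\dinf(x,x')$ and thus on whether $\std$ or $\stdp$ is deposited; this must be carefully reconciled with the collision rules defined just above. A minor technical wrinkle is the mismatch between the $1$-connectedness used to define territories and the $\dinf$-propagation of signals, which is resolved by noting that the speed-$1$ lower bound forces the complement of other hearts' reached sets to contain a full $\dinf$-shortest path from $x$ to $y$ whenever $y$ lies in $x$'s territory.
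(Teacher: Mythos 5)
The paper states this as a \emph{Fact} and supplies no proof; it is treated as an immediate consequence of the definition of territory (``the set of cells reached first by organism-building signals emitted by this heart'') together with the speed-$1$ propagation rule, so there is no paper argument to match your proof against. I will therefore only assess soundness.

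Your induction is in the right spirit but there are two gaps. The first is a misapplication of your own invariants: in the case $\dinf(x',y)<k$ you write that ``by invariant (i) applied to $x'$ the signal from $x'$ reaches $y$ at time at most $t_n+\dinf(x',y)$,'' but invariant (i) is a \emph{lower} bound on arrival times (it says the signal cannot arrive \emph{earlier} than $t_n+\dinf$). The upper bound you need is (ii), but (ii) is conditional on the existence of an unobstructed shortest path from $x'$ to $y$, which you have not established. The fix is not too hard, but it is a different statement: what you actually want is a \emph{pooled} version of the speed-$1$ upper bound, namely that every colony cell $z$ is reached by \emph{some} organism-building signal at time exactly $t_n+\min_{x'}\dinf(x',z)$. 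That statement has a clean induction on the $\min$ (pick a neighbour $z'$ of $z$ one step closer to the minimising heart; the min for $z'$ is exactly one less, apply the induction hypothesis, and propagate), and unlike your (ii) it needs no hypothesis on any particular path. With that pooled bound, $\dinf(x',y)<k$ implies some signal visits $y$ strictly before $t_n+k$, while by the lower bound $x$'s signal cannot arrive before $t_n+k$, so $y$ is not first reached by $x$'s signal, contradicting $y\in\mathrm{territory}(x)$.

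The second gap is the parenthetical ``(whose intermediate cells are themselves in $\mathrm{territory}(x)$, by applying the same argument or the inductive hypothesis to them).'' Your inductive hypothesis only covers cells at distance $<k$ \emph{that are already known to lie in $x$'s territory}; you have not shown the intermediate cells of a shortest path are in $x$'s territory, and indeed that assertion is precisely Lemma~\ref{lem:triangul}, which the paper proves \emph{after} this Fact. Invoking ``the same argument'' here is where a circularity lurks. To close it, either strengthen the induction hypothesis to include the star-domain property directly (essentially folding Lemma~\ref{lem:triangul} into the induction), or first derive the pooled upper bound above and observe that $y\in\mathrm{territory}(x)$ forces $\dinf(x,y)=\min_{x'}\dinf(x',y)$, which already gives the exact arrival time $t_n+k$ without needing any control over the intermediate cells of a specific shortest path.
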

%


The following lemma gives insight about the shape of the territories, namely, that they are a (discrete) star domain, whether the borders are included or not.

\begin{lemma}
\label{lem:triangul}
If a cell $y$ belongs to the territory of a heart in cell $x$, then each cell $y'$ such that $\dinf(x,y')+\dinf(y',y)=\dinf(x,y)$ is also in this territory. Furthermore, $y'$ can be a border only if $y$ is a border.
\end{lemma}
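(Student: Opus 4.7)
The plan is to reduce everything to a single \emph{gap preservation inequality}: for every heart $x''$ distinct from $x$,
\[
\dinf(x'', y') - \dinf(x, y') \;\geq\; \dinf(x'', y) - \dinf(x, y).
\]
I would derive this from the $\dinf$ triangle inequality $\dinf(x'', y) \leq \dinf(x'', y') + \dinf(y', y)$ combined with the geodesic hypothesis $\dinf(x, y) = \dinf(x, y') + \dinf(y', y)$, by subtracting $\dinf(y', y)$ from both sides. Intuitively, sliding from $y$ towards $x$ along a $\dinf$-geodesic cannot decrease the advantage that $x$'s signal has over any competing signal.

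For the first assertion, I would use the characterisation that a cell $w$ lies in $x$'s territory iff $x$ is uniquely the closest heart to $w$ in $\dinf$, i.e.\ iff $\dinf(x'', w) - \dinf(x, w) \geq 1$ for every $x'' \neq x$. Since $y$ is in the territory, the gap is $\geq 1$ at $y$; by gap preservation the gap is $\geq 1$ at $y'$, so $y'$ also lies in the territory.

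For the second assertion, I would argue by contrapositive: assuming $y$ is interior (not a border), deduce that $y'$ is interior. An interior cell in $x$'s territory is characterised by having gap at least $2$ to every other heart, since a cell with gap exactly $1$ to some $x''$ produces, up to parity, a pseudo-border in tandem with an adjacent cell of $x''$'s territory. Gap preservation then transports the $\geq 2$ gap from $y$ to $y'$, so $y'$ is also interior. Equivalently, given a pseudo-border witness $z'$ adjacent to $y'$ with $\dinf(x'', z') = \dinf(x, y')$ and $z'$ uniquely in $x''$'s territory, I would translate by $y - y'$ to obtain $z := y + (z' - y')$, which is adjacent to $y$; chained triangle inequalities using the geodesic hypothesis yield $\dinf(x'', z) = \dinf(x, y)$ together with $\dinf(x, z) = \dinf(x, y) + 1$, so $z$ remains uniquely in $x''$'s territory and witnesses that $y$ is itself a pseudo-border pointing to $x$.

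The main difficulty will be the precise combinatorial characterisation of pseudo-borders in terms of the distance gap, together with the verification that the translated witness $z$ is still uniquely in $x''$'s territory -- i.e.\ that no third heart $x'''$ which did not threaten $z'$ unexpectedly becomes closest to $z$. This is handled by applying gap preservation a second time between $z$ and $z'$, and by carefully tracking the parity of $\dinf(x, x'')$ that governs whether competing signal fronts meet head-on (creating a neutral border) or in adjacent cells (creating pseudo-borders).
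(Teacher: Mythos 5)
Your gap preservation inequality is correct and it captures the essential monotonicity that drives the lemma; it is a genuinely different route from the paper's. The paper reasons dynamically: it builds an $\infty$-path from $y'$ to $y$ that stays inside the geodesic region, locates the first cell $y^{(j)}$ on the path that is in the territory, and compares the arrival times of the organism-building signals at $y^{(j-1)}$ and $y^{(j)}$ to derive a contradiction. Your version collapses this step-by-step argument into a single algebraic inequality, $\dinf(x'', y') - \dinf(x, y') \geq \dinf(x'', y) - \dinf(x, y)$, which is cleaner and avoids the need to pick an intermediate path and a first-in-territory cell. Once the gap characterisation is in place, both parts of the lemma drop out of it immediately: gap $\geq 1$ at $y$ transfers to $y'$ for the first assertion, and for the second, $y'$ a border forces gap $= 1$ at $y'$, so gap at $y$ is squeezed between $1$ (since $y$ is in the territory) and the value at $y'$, hence equals $1$, hence $y$ is a border. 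Your proposed contrapositive argument with translated witnesses is unnecessarily convoluted; the squeeze above is a direct corollary of the same inequality.

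The gap, however, is that the static characterisation you invoke --- $w$ lies in $x$'s territory iff $\dinf(x'', w) - \dinf(x, w) \geq 1$ for all other hearts $x''$, and $w$ is interior iff the gap is $\geq 2$ --- is asserted rather than established, and it is precisely where the real content of the lemma lives. The paper's Fact only supplies the forward direction (territory membership implies a timing bound, hence a distance gap); the reverse direction, that a positive gap at $w$ guarantees $x$'s signal actually reaches $w$ unobstructed, requires an argument that every cell on some $\dinf$-geodesic from $x$ to $w$ is itself uncontested, and that no collision or membrane event intervenes. This is essentially the lemma you are trying to prove, so stated as a black box the characterisation is circular. It can be made non-circular (e.g.\ by an induction on $\dinf(x,w)$ using gap preservation at every step to show the geodesic is clear), but that inductive argument is morally the paper's path argument repackaged, and you would also have to handle the parity/pseudo-border bookkeeping carefully, since a gap of exactly $1$ produces a pseudo-border only when the competing signal actually arrives at a neighbour, which in turn depends on the separation guarantee between hearts. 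You flag this difficulty at the end, which is good, but as written the proposal moves the weight of the proof onto an unproven equivalence rather than discharging it.
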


\begin{proof}
  For the first part of the lemma, suppose such a $y'$ is not in the territory of $x$. We can build an $\infty$-path between $y'$ and $y$ consisting of cells $(y^{(i)})_{0\leq i\leq N}$ such that $\dinf(x,y^{(i)})+\dinf(y^{(i)},y)=\dinf(x,y)$. Take $y^{(j)}$ the first $y^{(i)}$ that belongs to the territory of $x$. 
  
  Denote $T=t_n+\dinf(x,y^{(j-1)})$ the time when the organism-building signal emitted by $x$ should have reached $y^{(j-1)}$ in the absence of any other heart. Since $y^{(j-1)}$ is not in the territory of $x$, there must exist another heart $x'$ that emitted an organism-building signal that arrived in $y^{(j-1)}$ before time $T$ (recall that the pseudo-borders are considered as parts of organisms). 
  
  Since $y^{(j)}$ is adjacent to $y^{(j-1)}$, $y^{(j)}$ is reached by some signal before time $T+1$. But the signal from $x$ cannot reach $y^{(j)}$ before time $t_n+\dinf(x,y^{(j)}) = T+1$. Therefore $y^{(j)}$ is not in the territory of $x$, a contradiction.

For the second case, notice that $y'$ is a border if and only if both signals reached this cell simultaneously. Then the same reasoning along the path $(y^{(i)})_{0\leq i\leq N}$ shows that these cells cannot be inside the territory of $x$.
\qed\end{proof}

\subsubsection{Natural selection}

In this section we consider the evolution layer and the alphabet $\Aevol$.

To have enough computation space and ensure that the auxiliary symbols are in negligible density, the minimal size of the organisms should grow regularly. More precisely, we require that the territory of any organism during the $n$-th generation contains at least a hypercube of side length $2n+1$ centred at its heart. If two hearts are at distance less than $2n+1$, they are said to be in \emph{conflict}. In this section, we devise a selection process to detect this fact and to erase one of them.

The first lemma is related to the quantity and position of hearts conflicting with a given heart. 
\begin{definition}
For each point $x$ and each vector $\mathcal R \in \{>, <, =\}^d$ that is not $=^d$ we introduce the corresponding \emph{quadrant} defined as $\{y\in\Z^d\ :\ \forall i,\ y_i\mathcal R_i x_i\}$. 
 
For $n\in\N$, each quadrant contains a unique \emph{$n$-extremal point}, which is a point $y$ such that $y_i-x_i\in\{-n, n, 0\}$ for all $i$. Denote $Ext_n(y)=\{\varepsilon e_i: \varepsilon\in\{1,-1\},y_i-x_i=n\varepsilon\}$ the set of its directions of extremality relative to $x$. Notice that this notion is defined for any vector $y$ such that $d_\infty(x,y)=n$.
\end{definition}

\begin{lemma}
  \label{lem:conflict_sim}
At any given generation, a heart conflicts with at most one other heart in each quadrant. In particular, it conflicts with at most $3^d-1$ hearts.
\end{lemma}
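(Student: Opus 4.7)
The plan is to reduce the lemma to a coordinate-wise geometric inequality, then close via a short induction on $n$.

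The key inequality I would establish is: if $y_1, y_2$ lie in the same quadrant $\mathcal R$ of $x$, then
\[d_\infty(y_1, y_2) \leq \max(d_\infty(y_1, x), d_\infty(y_2, x)) - 1.\]
To prove it I would argue coordinate by coordinate. In coordinates $i$ where $\mathcal R_i$ is $=$, both $y_{k,i}$ equal $x_i$, contributing $0$. In coordinates where $\mathcal R_i \in \{<, >\}$, the offsets $y_{k,i} - x_i$ share a common nonzero sign, so writing $a_i = |y_{1,i} - x_i| \geq 1$ and $b_i = |y_{2,i} - x_i| \geq 1$, one has $|y_{1,i} - y_{2,i}| = |a_i - b_i| = \max(a_i, b_i) - \min(a_i, b_i) \leq \max(a_i, b_i) - 1$. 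Taking the maximum over $i$ (and noting that the max defining $d_\infty(y_k, x)$ is attained in a strict coordinate, since the $=$-coordinates contribute $0$ and $y_k \neq x$) yields the inequality.

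Now suppose, for contradiction, that two distinct hearts $y_1, y_2$ lie in a common quadrant of $x$ and both conflict with $x$ at generation $n$, so $d_\infty(y_k, x) \leq 2n$. The inequality gives $d_\infty(y_1, y_2) \leq 2n - 1$, so $y_1$ and $y_2$ are themselves within the conflict threshold at generation $n$. I would then proceed by induction on $n$: the base case is immediate because the seed viability constraint ($d_\infty \geq 5$) forbids any conflict at generation $n = 1$. At the inductive step, the bound $d_\infty(y_1, y_2) \leq 2n - 1$ combined with the inductive hypothesis applied at earlier generations forces one of $y_1, y_2$ to have been eliminated by the selection process before generation $n$, contradicting their coexistence. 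The counting statement follows immediately, since there are exactly $3^d - 1$ nontrivial quadrants around $x$.

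The main obstacle lies in the inductive bookkeeping: $y_1$ and $y_2$ may enter conflict with each other only simultaneously with their conflict with $x$, and one must trace carefully through the selection process at earlier generations to pinpoint the step at which one of them should have been removed. This requires combining the clean geometric inequality above with the detailed dynamics of the selection mechanism described in the surrounding construction.
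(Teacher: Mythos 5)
Your approach is essentially the one in the paper, just with the induction made explicit where the paper leaves it implicit. The paper's proof observes that surviving hearts at generation $n$ are pairwise at $d_\infty$-distance at least $2n$ (because the selection at earlier generations removed closer pairs --- exactly the induction you spell out), asserts that hearts conflicting with $x$ satisfy $d_\infty(x,y)\in\{2n,2n+1\}$, and then claims that two such hearts in the same quadrant of $x$ would be at distance at most $2n-1$ from each other, contradicting the lower bound. Your coordinate-wise inequality $d_\infty(y_1,y_2)\le\max\bigl(d_\infty(y_1,x),d_\infty(y_2,x)\bigr)-1$ is precisely the geometric content behind that one-sentence claim, and your argument for it is correct.

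One thing to be careful about, though, is that the arithmetic at the boundary does not quite close as written. You take the conflict threshold at generation $n$ to be $d_\infty\le 2n$, so your inequality yields $d_\infty(y_1,y_2)\le 2n-1$. But at generation $n-1$ the threshold would then be $d_\infty\le 2(n-1)=2n-2$, so $d_\infty(y_1,y_2)=2n-1$ does \emph{not} force a conflict at generation $n-1$, and the inductive step leaves an unresolved corner case (e.g.\ in $d=2$, $x=0$, $y_1=(2n,1)$, $y_2=(1,2n)$). The paper's argument is exposed to exactly the same off-by-one tension: from $d_\infty(y_k,x)\le 2n+1$ the bound is $\le 2n$, not $\le 2n-1$, which only breaks the asserted lower bound $d_\infty(y_1,y_2)\ge 2n$ non-strictly. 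To close this properly one has to fix a precise convention for what ``conflict'' means (strict or non-strict, and whether body adjacency at distance $2n+1$ counts) and verify that the prior-generation threshold is at least as large as the geometric bound you derive. Since that is a bookkeeping issue shared with the paper rather than a flaw in your strategy, I would count your proof as following the paper's approach, with the caveat that the exact thresholds need to be pinned down.
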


\begin{proof}
Let $x$ be the position of the heart and $t_n\leq t<t_{n+1}$ the current time. The heart in $x$ conflicts with another heart at $y$ if and only if $d_\infty(x,y)\in\{2n,2n+1\}$. If two conflicting hearts were in the same quadrant, then they would be at distance at most $2n-1$, a contradiction. The bound is tight and reached by the configuration where there is a heart per quadrant at each $2n$-extremal point relative to the heart in $x$.
\qed\end{proof}

\paragraph{Settling conflicts at random} Each heart keeps in memory a random \emph{central bit} and the winner of each conflict is decided through the value of these bits: if the bits are equal, destroy the largest heart (in lexicographic order), and the smallest otherwise. As long as the central bits of all hearts are independent of each other, this process will ensure that the probability of each heart to be destroyed in any conflict has a constant lower bound that only depends on the initial Bernoulli.

\begin{remark}
In dimension $1$~\cite{HellouinSablik,bdpst}, the heart in the smallest cell was systematically killed. The reason why we adopt a more sophisticated kill choice method, similar to the one used in~\cite{Delacourt-Hellouin}, is that we later need to control the growth rate of the organisms. This growth rate analysis under the simpler method proved to be more difficult than in dimension $1$ and could not be carried out. 
\end{remark}

\paragraph{Independent bit harvesting} In addition to its central bit, each heart maintains a collection of $3^d-1$ \emph{side} bits, one for each quadrant, that must remain independent of each other and of every other central or side bits of other hearts. This is achieved by using the independence of the initial measure. At time $0$, each seed looks at its $3^d-1$ adjacent cells (which are in one-to-one correspondence with its quadrants): if a cell contains the symbol $\memb$ (the symbol choice is arbitrary) then the corresponding bit in the newborn heart is put to $1$, and $0$ otherwise. Remember that if two seeds are too close, one is erased, hence those bits are really independent from one another and from other hearts. 

Each time a heart kills another in a conflict, the dying heart transmits one of his side bits to its killer (the bit corresponding to which quadrant the killer belongs to). The new central bit of the killer is the sum of the side bits received from its victims. As we will see, this process lets us maintain probability bounds on the value of central bits while preserving independence.

\paragraph{Body building with bits} In order to detect conflicting hearts, each heart in position $x$ builds at time $t_n$ a \emph{body}: a hypercube of side length $5$ centred around the heart. Each body symbol in position $y$ carries various pieces of information: the central bit of the heart, which quadrant (relative to the heart) it belongs to, the corresponding side bit, and the set of directions in which it is $2$-extremal ($Ext_2(y)$). Next to the heart, a dedicated process writes two copies of $n$ as binary counters from the computation layer (see next section).

The three phases of conflict resolution are:
\begin{description}
\item[Body building] Each heart sends $n$ (hypercube-shaped) signals that progress at speed $1$ in every direction, one every $n$ steps. The count is kept by decrementing the binary counters. As each signal reaches the body, it pushes it outwards by one cell. The set of directions in which it is $k$-extremal (where $k$ is the number of steps) is kept updated.
  \item[Conflicting] If two bodies intersect, the corresponding hearts are in conflict. Body symbols at $n$-extremal points determine locally the relative positions (quadrants) of the two hearts (see next paragraph), and using the values of the central bits, the winner of the conflict. This phase takes only one step.
\item[Body shrinking] Each heart keeps sending (hypercube-shaped) signals every $n$ steps, but these signals now pull the body inwards as they reach it. The body is destroyed as it reaches size $5$, transmitting its information to the heart. The heart stops sending signals.
\end{description}

If a heart has fought no conflict during a generation, nothing happens. If it loses at least one conflict, it self-destroys. Not being able to send border-building signals, its territory will be occupied by other organisms at the next phase. If it wins all its conflicts, it replaces its central bit by the sum (modulo $2$) of all the bits it receives from its victims.



\paragraph{Quadrant determination} During the conflicting phase, only the extremal points of its body settle the conflict. Take an extremal point of some body in position $y$ such that there is a body symbol in position $y'$ (not necessarily extremal), belonging to another body. Denote $\mathcal R$ and $\mathcal R'$ the respective quadrants of $y$ and $y'$. The extremal point $y$ deals with the conflict if and only if these properties hold:
\begin{itemize}
  \item (I) $y'=y+\sum_{v\in D}v,\ D\subseteq Ext_n(y)$ (note that $D$ may be empty, that is, $y'=y$) ;
\item (II) $\forall \varepsilon e_i\in Ext_n(y)\cap Ext_n(y'),\ y_i\neq y'_i$;
\item (III) for each $i$, if $\mathcal{R}_i$ is $=$ then $\mathcal{R'}_i$ is $=$ as well.
\end{itemize}

The first condition means that extremal points are only interested in directions (or sets of directions) they are extremal in. Both other conditions are illustrated in Fig~\ref{fig:quadrants}.

\begin{lemma}
A conflict is settled exactly once by each involved heart, at the extremal point corresponding to the relative position (quadrant) of the other involved heart.
\end{lemma}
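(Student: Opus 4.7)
My plan is to identify, directly from the quadrant $\mathcal Q$ of $x'$ relative to $x$, a canonical extremal point $y^\star$, verify that it settles the conflict through a companion $y' = y^\star$, and then rule out every other extremal point of the body at $x$. Throughout I would assume that the two hearts satisfy $\dinf(x,x') \leq 2n$, so that the conflict is genuine and their bodies overlap.

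Define $y^\star$ by setting $y^\star_i - x_i$ equal to $+n$, $-n$, or $0$ according as $\mathcal Q_i$ is $>$, $<$, or $=$; this is precisely the extremal point ``in the direction of $x'$''. Since $\dinf(y^\star, x') \leq n$, $y^\star$ is itself a body symbol of the body at $x'$, and I would use it also as $y'$. Condition (I) holds trivially with $D = \emptyset$. Condition (III) holds because the only indices $i$ with $\mathcal R_i$ equal to $=$ are those with $\mathcal Q_i$ equal to $=$, where $x_i = x'_i = y^\star_i$. Condition (II) requires a short case check: $y^\star$ can be extremal in direction $i$ both from $x$ and from $x'$ only when $|x_i - x'_i| = 2n$, and in that case the two extremality signs flip, so $Ext_n(y^\star)$ relative to $x$ and $Ext_n(y^\star)$ relative to $x'$ are disjoint as sets of signed unit vectors.

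For uniqueness I would argue coordinate by coordinate that any extremal point $y \neq y^\star$ fails one of the three conditions. If some $i$ has $\mathcal Q_i$ different from $=$ but $y_i = x_i$, condition (I) forces $y'_i = y_i = x_i \neq x'_i$, making $\mathcal R_i$ equal to $=$ while $\mathcal R'_i$ is not, violating (III). If $y$ is extremal in direction $i$ with the sign opposite to $\mathcal Q_i$ (say $y_i = x_i - n$ while $\mathcal Q_i$ is $>$), condition (I) restricts $y'_i$ to two values lying strictly outside $[x'_i - n, x'_i + n]$, so $y'$ cannot be a body symbol at $x'$. Finally, if $\mathcal Q_i$ is $=$ but $y$ is extremal in $i$, body inclusion of $y'$ forces $y'_i = y_i$, which places the same signed unit vector $\varepsilon e_i$ in $Ext_n(y)$ and $Ext_n(y')$ while $y_i = y'_i$, violating (II). These three obstructions together pin $y_i - x_i$ in every coordinate and force $y = y^\star$.

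The main obstacle is the signed character of condition (II): the intersection $Ext_n(y) \cap Ext_n(y')$ uses signed unit vectors, and the delicate situation is $|x_i - x'_i| = 2n$, in which $y^\star$ is extremal from both $x$ and $x'$ in direction $i$. The fact that the two signs are opposite is precisely what makes (II) vacuous for $y^\star$, while the same sign-matching requirement forces (II) to fail in the symmetric obstruction case ($\mathcal Q_i$ equal to $=$ with $y$ extremal in $i$). Verifying that this flipping behavior handles every boundary configuration correctly is where the argument is most delicate.
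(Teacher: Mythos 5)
There is a genuine gap. You restrict to $\dinf(x,x') \leq 2n$ and, crucially, use $y' = y^\star$ (condition (I) with $D = \emptyset$). Under that hypothesis your existence and uniqueness checks are correct and essentially coincide with the paper's case analysis (your $\mathcal Q$ is the paper's $\mathcal R$, your three obstruction cases match the paper's three). But the paper's conflict regime is $\dinf(x,x') \in \{2n, 2n+1\}$ (see the earlier lemma bounding the number of simultaneous conflicts, whose proof explicitly states the distance range). Your argument does not handle the odd case $\dinf(x,x') = 2n+1$, and this case is not a boundary curiosity: it is precisely why condition (I) allows $D \neq \emptyset$ and $y' \neq y$. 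Indeed, when $|x_i - x'_i| = 2n+1$ your $y^\star$ satisfies $|y^\star_i - x'_i| = n+1$, so $y^\star$ is \emph{not} a body symbol of $x'$ and you cannot take $y' = y^\star$. The paper repairs this by shifting: it takes $y'_i - x'_i = -(y_i - x_i)$ in the maximizing coordinates, yielding $y'_i - y_i = \alpha$ with $\alpha = d(x,x') - 2n \in \{0,1\}$, so $y' = y + \sum_{v\in D} v$ with $D \subseteq Ext_n(y)$ nonempty exactly when $\alpha = 1$. Conditions (II) and (III) then have to be re-verified for this shifted $y'$, which is a computation your argument never performs.

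Your closing paragraph identifies the signed nature of (II) at $|x_i - x'_i| = 2n$ as the delicate point, but that case is actually benign (as you verified, the extremality signs flip and (II) becomes vacuous). The delicate point you have not addressed is the parity case $\dinf(x,x') = 2n+1$, where the two body surfaces are adjacent rather than coincident and the companion cell $y'$ must be taken one step away from $y$ into the other body. To complete the proof along the paper's lines, you would keep your existence construction for $\alpha = 0$, add the $y' = y + \sum_{v\in D} v$ construction for $\alpha = 1$, and re-run the (II)/(III) checks with the shift; your uniqueness obstructions, on the other hand, already go through unchanged in both parities.
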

\begin{proof}
Consider some heart in position $x$, and a conflicting heart $x'$ located in quadrant $\mathcal R$. Since $d(x,x')\in\{2n,2n+1\}$, we can check that $d(y,x')\leq n+1$ where $y$ is the $n$-extremal point for $x$ in quadrant $\mathcal R$.  We check that $y$ satisfies (I)  with some neighbour $y'$ and they satisfy both (II) and (III). 

 Since $d(x,x') = 2n+\alpha$ for some $\alpha\in\{0,1\}$, there is some $1\leq i\leq d$ such that $|x_i-x'_i| = 2n+\alpha$. For any such $i$, fix $y'_i-x'_i = -(y_i-x_i)\in\{-n,n\}$ so that $|x_i-y'_i|= n+\alpha$.

For all others $1\leq i\leq d$:
\begin{itemize}
\item if $\mathcal R_i$ is $>$ take $y'_i=x_i+n = x'_i+(n-x'_i+x_i) \in [x'_i-n, x'_i+n]$
\item if $\mathcal R_i$ is $<$ take $y'_i=x_i-n = x'_i-(n+x'_i-x_i) \in [x'_i-n, x'_i+n]$
\item if $\mathcal R_i$ is $=$ take $y'_i= x'_i$.
\end{itemize}

 Hence $d(x',y') = n$ and $y'$ belongs to the body of $x'$.

 (I) For every $1\leq i\leq d$ such that $|x_i-x'_i| = 2n+\alpha$, $|y'_i-y_i|=|(y'_i-x'_i)+(x'_i-x_i)+(x_i-y_i)|=\alpha$. For all others $1\leq i\leq d$:
\begin{itemize}
\item if $\mathcal R_i$ is $>$, then $y'_i-y_i=(x_i+n)-(x_i+n)= 0$.
\item if $\mathcal R_i$ is $<$, then $y'_i-y_i=(x_i-n)-(x_i-n)=0$.
\item if $\mathcal R_i$ is $=$, then $y'_i-y_i=(y'_i-x'_i)+(x'_i-x_i)+(x_i-y_i)=0+0+0$.
\end{itemize}
This proves that $d(y,y')\leq 1$ and (I) is verified.

(II) Assume $y$ and $y'$ share an extremality direction $e_i$ and $y_i=y'_i$. That is, $y_i-x_i = n$ and $y'_i-x'_i = n$, so that $x_i = x'_i$. Furthermore, notice that $\mathcal R_i$ is $>$ since $y$ belongs to quadrant $\mathcal R$. But since $x_i = x'_i$, $x'$ wouldn't be located in quadrant $\mathcal R$, a contradiction. The same argument works for an extremality direction $-e_i$.

(III) If $\mathcal R_i$ is $=$, since $x'$ is located in quadrant $\mathcal R$ relative to $x$, this means that $x_i = x'_i$. Since $y$ is extremal for $x$ in quadrant $\mathcal R$, we have $y_i = x_i$. Therefore $y'$ is located relative to $x'$ in a quadrant $\mathcal R'$ such that $y'_i = x'_i$, i.e. $\mathcal R'_i$ is $=$.

We show that no other extremal point handles the conflict. Let $z$ be another extremal point in quadrant $\mathcal R"$ for $x$, next to a body symbol $z'$ for $x'$. We distinguish various cases:

\begin{itemize}
\item For some $i$, assume $\mathcal R_i$ is $=$ but not $\mathcal R"_i$. Then $x_i = x'_i$ but $z_i = x_i\pm n$, i.e. $z$ has an extremality direction along $e_i$. Similarly $z'$ has an extremality direction along $e_i$ and $z_i=z'_i$, contradicting condition (II).
\item For some $i$, assume $\mathcal R_i$ is $<$ but $\mathcal R"_i$ is $>$. Then $x'_i < x_i$ and $z_i = x_i+n$, so that $z_i>x'_i+n\geq z'_i$ and $z$ does not handle the conflict according to (I).
\item For some $i$, assume $\mathcal R_i$ is $<$ but $\mathcal R"_i$ is $=$. Then $z_i = x_i > x'_i$ and as $\varepsilon e_i\notin Ext_n(z)$, (I) ensures that $z_i=z'_i$. Hence $z'_i > x'_i$ and  $\mathcal R"_i$ is not $=$, which contradicts condition (III).
\end{itemize}
\qed\end{proof}

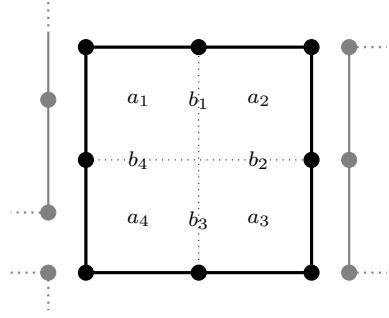
\begin{figure}
\begin{center}
\begin{tikzpicture}
\draw[very thick] (0,0) -- (0,3) -- (3,3) -- (3,0) -- (0,0);
\draw[dotted] (0,1.5) -- (3,1.5) (1.5,0) -- (1.5,3);
\node at (0.7,2.3) {$a_1$};
\node at (2.3,2.3) {$a_2$};
\node at (2.3,0.7) {$a_3$};
\node at (0.7,0.7) {$a_4$};
\node at (1.5,2.3) {$b_1$};
\node at (2.3,1.5) {$b_2$};
\node at (1.5,0.7) {$b_3$};
\node at (0.7,1.5) {$b_4$};
\filldraw (0,0) circle (0.1cm);
\filldraw (1.5,0) circle (0.1cm);
\filldraw (3,0) circle (0.1cm);
\filldraw (3,1.5) circle (0.1cm);
\filldraw (3,3) circle (0.1cm);
\filldraw (1.5,3) circle (0.1cm);
\filldraw (0,3) circle (0.1cm);
\filldraw (0,1.5) circle (0.1cm);

\draw[thick, gray] (-0.5,0.8) -- (-0.5,3.2);
\draw[thick, gray, dotted] (-0.5,0.8) -- (-1,0.8);
\draw[thick, gray, dotted] (-0.5,3.6) -- (-0.5,3.2);
\filldraw[gray] (-0.5,0.8) circle (0.1cm);
\filldraw[gray] (-0.5,2.3) circle (0.1cm);

\draw[thick, gray] (3.5,0) -- (3.5,3);
\draw[thick, gray, dotted] (3.5,0) -- (4,0);
\draw[thick, gray, dotted] (3.5,3) -- (4,3);
\filldraw[gray] (3.5,0) circle (0.1cm);
\filldraw[gray] (3.5,1.5) circle (0.1cm);
\filldraw[gray] (3.5,3) circle (0.1cm);

\draw[thick, gray, dotted] (-1,0) -- (-0.5,0) -- (-0.5,-0.5);
\filldraw[gray] (-0.5,0) circle (0.1cm);
\end{tikzpicture}
\end{center}
\caption{\label{fig:quadrants}Body of a heart, with quadrants and extremal points. The heart is involved in conflicts in quadrants $a_1, b_2$ and $a_4$. The extremal point for $b_4$ is uninvolved because of rule (III), and extremal points for $a_2$ and $a_3$ are uninvolved because of rule (II). Extremal points for  $b_2$ and  $a_4$ satisfy (II) and (III).}
\end{figure}

\begin{lemma}
   \label{lem:bound_proba_color}
At any time $t$, the values of every central and side bit are independent of each other. Furthermore there exists a constant $0<\alpha\leq 1/2$ such that for any of these bits, the probability that it is worth $1$ at time $t$ is bounded between $\alpha$ and $1-\alpha$.
\end{lemma}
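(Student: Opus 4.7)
The plan is to trace every central and side bit back to an independent randomness source in the initial configuration. Set $p := \mu([\memb]) \in (0,1)$ (strict inequalities by non-degeneracy of $\mu$), and for each cell $y \in \Z^d$ define $B_y := \mathbf{1}_{\pgrowth(c_y) = \memb}$. Under $\mu$ the variables $(B_y)_{y \in \Z^d}$ are i.i.d.\ Bernoulli with parameter $p$. I will prove by induction on $t$ that every central or side bit attached to a surviving heart at time $t$ is of the form $\bigoplus_{y \in S} B_y$ for some finite nonempty $S \subset \Z^d$, and that the supports $S$ associated with distinct surviving bits are pairwise disjoint. Joint independence and the probability bound then follow easily.

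At time $1$, the $3^d-1$ side bits of a viable seed at position $x$ are by construction the variables $B_{x+v}$ for its adjacent cells $x+v$, each with singleton support, and the central bit is initialised from an additional designated cell taken in the neighbourhood of the seed. Viable seeds are at $d_\infty$-distance at least $5$, so all these cells are disjoint across different seeds and the invariant holds. For the inductive step, the bits only evolve during the conflict-resolution phases at times $t_n$ described in Section~\ref{sub:colonies}: a heart $h$ that wins all its conflicts replaces its central bit by the XOR of the side bits transmitted by its victims. The new support is the union of the singleton supports of the received bits; since victims are destroyed together with all their remaining bits, disjointness across the surviving population is preserved, while unchanged side bits keep their singleton supports, and hearts that lose a conflict self-destroy, removing their supports from the pool entirely.

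Once the invariant is established, independence is immediate: any finite collection of surviving bits is the image by XOR of pairwise disjoint subfamilies of the independent variables $(B_y)$. For the probability bound, the standard identity
\[
\mathbf{P}\Bigl(\bigoplus_{y \in S} B_y = 1\Bigr) = \tfrac{1}{2}\bigl(1-(1-2p)^{|S|}\bigr),
\]
combined with $|1-2p|<1$ and $|S|\geq 1$, places this probability in $[\min(p,1-p),\max(p,1-p)]$. Setting $\alpha := \min(p,1-p) > 0$ then gives the claimed uniform bound on $\mathbf{P}(\mathrm{bit}=1)$ and hence also on $\mathbf{P}(\mathrm{bit}=0)$.

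The main obstacle is verifying that the construction of Section~\ref{sub:colonies} never re-routes a single cell $y$ into two different surviving bits. The key points are that each dying heart transmits only the single side bit corresponding to the quadrant of its (unique) killer, that the killer's old central bit is \emph{discarded} rather than XOR'd with the incoming bits, and that the sets of victims of distinct survivors are automatically disjoint as a byproduct of the one-on-one conflict graph. The fact that a single heart may win up to $3^d-1$ simultaneous conflicts in a generation (Lemma~\ref{lem:conflict_sim}) is handled by treating the received side bits of all its victims as a single atomic XOR input for the new central bit, which remains consistent with disjointness.
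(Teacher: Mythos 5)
Your proof is correct in substance but takes a genuinely different route from the paper's, and one point in your last paragraph needs repair.

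The paper argues purely distributionally: it carries through the generations the inductive hypothesis that the living bits are pairwise independent with $\mathbf{P}(\mathrm{bit}=1)\in[\alpha,1-\alpha]$, and shows that the XOR of $k$ received side bits stays inside $[\alpha,1-\alpha]$ via a conditioning identity. You instead maintain an explicit structural invariant: every surviving bit equals $\bigoplus_{y\in S}B_y$ for a finite nonempty support $S$, with supports pairwise disjoint across the surviving population. The closed form $\mathbf{P}(\bigoplus_{y\in S}B_y=1)=\tfrac12\bigl(1-(1-2p)^{|S|}\bigr)$ then gives both independence and the bound at once. This is a more concrete argument than the paper's and makes the independence claim transparent, at the cost of having to verify the disjointness invariant carefully through the conflict dynamics. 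Both approaches rely on the same two structural facts: side bits are immutable singletons, and the old central bit of a winner is discarded.

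There is one inaccuracy you should fix. You assert that a dying heart has a \emph{unique} killer and that "the sets of victims of distinct survivors are automatically disjoint." Neither is true: by Lemma~\ref{lem:conflict_sim} a heart can be engaged in up to $3^d-1$ conflicts simultaneously (one per quadrant), and if it loses several it transmits a side bit to each of its killers. So the victim sets of distinct survivors can overlap. What rescues the disjointness of the \emph{supports} is not disjointness of victim sets but the fact that a victim transmits \emph{different} side bits (corresponding to different quadrants) to different killers, and those side bits have disjoint singleton supports; this is exactly the "at most one conflict per quadrant" observation that the paper's proof also uses. With that correction the invariant, and hence the whole argument, holds. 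One further small point: the paper leaves the initialisation of the central bit unspecified; your inference that it is read from an extra designated cell near the seed (possible since viable seeds are at $d_\infty$-distance $\geq 5$) is a reasonable and harmless choice, but worth flagging as a convention rather than something stated in the construction.
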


\begin{proof}
At time $0$, the central and side bits of each heart are determined by the initial Bernoulli measure, so the lemma is verified and the value of $\alpha$ is determined by the initial measure. A side bit is never changed and only interacts with the outside when its heart is destroyed, so we only prove the result for central bits. Since the only opportunity for central bits to be influenced by or to influence other bits is during conflict resolution, we prove the result by induction on the generation $n$ (with the basic case of the $0$-generation already proved).

Assume the result holds for some generation $n$ and consider a heart alive at time $t_{n+1}$. If the heart is not involved in a conflict at generation $n+1$, then its central bit is unchanged and doesn't influence any outside process, so the property is maintained. If it is involved in $k$ conflicts, then it either dies (in which case there is nothing to prove) or wins them all. 

Denote $\beta_1,\dots, \beta_k$ the side bits it received from his victims. The value of these side bits did not influence the conflict resolution and remains independent of every other bit since they are not sent to any other conflict winner (there is at most one conflict per quadrant). Furthermore:

\begin{align*}\mu\left(\sum_{i\geq 1}\beta_i = 1 \mod 2\right) &= \mu(\beta_1 = 1)\cdot\mu\left(\sum_{i\geq 2}\beta_i = 0 \mod 2\right) + \mu(\beta_1 = 0)\cdot\mu\left(\sum_{i\geq 2}\beta_i = 1 \mod 2\right)\\&\geq \alpha\cdot\mu\left(\sum_{i\geq 2}\beta_i = 0 \mod 2\right) + (1-\alpha)\cdot\mu\left(\sum_{i\geq 2}\beta_i = 1 \mod 2\right)\\&\geq \min(\alpha,1-\alpha)\geq\alpha \end{align*}
and symmetrically, using the induction hypothesis of independence between the values of all $\beta_i$.  \qed\end{proof}

Thanks to this process we can bound the \emph{radius} of an organism, which is the largest distance from a cell of its territory to its heart. 

\begin{definition}
An organism is \emph{healthy} if its radius is less than $K_n = 2^{n^{d-\frac 12}}$.
\end{definition}

\begin{lemma}
  \label{lem:density_hearts}
  Any cell belongs to a healthy organism with asymptotic probability $1$, i.e.:
  \[\max_{t_n\leq t\leq t_{n+1}}\mu\left(\left\{c\in\A^{\Z^d}:\exists x\in\Z^d, d_\infty(x,0)\leq K_n,\ \porga(F^t(c)_x)=\stp\right\}\right)\xrightarrow[n\to\infty]{} 1.\]
\end{lemma}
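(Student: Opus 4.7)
The plan is to show, via a second-moment argument, that the ball $B_\infty(0, K_n) := \{x \in \Z^d : d_\infty(x, 0) \leq K_n\}$ contains at least one alive heart with $\mu$-probability tending to $1$, since any such heart yields the required state $\stp$ in $B_\infty(0, K_n)$ throughout $[t_n, t_{n+1}]$. First, non-degeneracy of $\mu$ provides a positive constant density $p_V > 0$ of viable seeds in the initial configuration (a seed at $y$ being viable iff no lexicographically smaller seed lies within $d_\infty$-distance $<5$). By Lemma~\ref{lem:conflict_sim}, each heart participates in at most $3^d - 1$ conflicts per generation; combined with the bit-independence and bias bound of Lemma~\ref{lem:bound_proba_color}, an elementary case analysis of the conflict-resolution rule gives a constant $\gamma = \gamma(\alpha, d) \in (0,1)$ such that a heart alive at the beginning of generation $k$ survives that generation with conditional probability at least $\gamma$. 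Iterating, any viable seed at $y$ produces a heart still alive at time $t \in [t_n, t_{n+1}]$ with probability at least $\gamma^n$.

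Let $N_n$ denote the number of such surviving hearts in $B_\infty(0, K_n)$. By shift-invariance, $\mathbf{E}[N_n] \geq p_V \gamma^n (2K_n + 1)^d$, and since $K_n^d = 2^{d n^{d-1/2}}$ with $d - 1/2 \geq 3/2$ for $d \geq 2$, this lower bound grows faster than any exponential in $n$ and dominates $\gamma^{-n}$. To convert expectation into a high-probability statement, I apply Paley--Zygmund: $\mu(N_n > 0) \geq \mathbf{E}[N_n]^2 / \mathbf{E}[N_n^2]$. The crucial observation is that the indicator ``viable seed at $y$, alive at generation $n$'' depends only on the initial configuration restricted to a ball $B_\infty(y, R_n)$ of radius $R_n = O(n^2)$: a conflict at generation $k$ involves only hearts within distance $2k+1$ of $y$, and tracing the recursive dependence of each heart's central bit on its victims' inherited side bits across all $k \leq n$ bounds the aggregated influence radius by $\sum_{k=1}^{n}(2k+1) = O(n^2)$. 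Consequently, for $d_\infty(y, y') > 2R_n$ the two survival events are independent, and splitting the covariance sum in $\mathbf{E}[N_n^2]$ accordingly yields
\[\mathbf{E}[N_n^2] \;\leq\; \mathbf{E}[N_n]^2 + O(n^{2d}) \cdot \mathbf{E}[N_n],\]
whence $\mu(N_n > 0) \geq (1 + O(n^{2d})/\mathbf{E}[N_n])^{-1} \to 1$.

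The main technical obstacle is making the $O(n^2)$ dependency-radius claim rigorous. Because the central bit of a heart at generation $k$ is the XOR of side bits harvested from its past victims, and those victims' identities were themselves determined by chains of earlier conflicts, the dependency set must be tracked recursively; one must verify that this recursion is controlled by the naive telescoping estimate and does not blow up super-polynomially. A secondary point is that the entire argument presupposes the colony structure covers $B_\infty(0, K_n)$ by time $t$; this follows from Lemma~\ref{lem:life_supremacy} as soon as $t_n$ is chosen sufficiently large (e.g.\ $t_n \geq K_n^2$), together with the fact that $B_\infty(0, K_n)$ contains a viable seed with probability $1 - e^{-\Omega(K_n^d)}$, so the event that the colonisation has engulfed the ball contributes a negligible correction.
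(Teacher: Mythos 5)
Your proof is correct in its essentials, but it takes a genuinely different combinatorial route from the paper. You run a second-moment argument: define $N_n$ as the number of surviving hearts in the ball of radius $K_n$, lower-bound its mean via the per-generation survival bound from Lemma~\ref{lem:bound_proba_color}, upper-bound its second moment by splitting pairs into ``far'' (independent) and ``close'' (at most $O(n^{2d})$ per point, using the dependency-radius estimate), and invoke Paley--Zygmund. The paper instead uses a simpler block argument: it defines $\nu_n$ as the per-cell probability of hosting a surviving heart, tiles the ball of radius $K_n$ into $\lambda_n$ disjoint sub-boxes of side comparable to the dependency radius $\frac{(2n+1)(2n+2)}{2}$, and exploits the resulting \emph{exact} independence of the events in distinct sub-boxes to bound the failure probability by $(1-\nu_n)^{\lambda_n}$, where $\nu_n \geq \alpha^{n(3^d-1)}\mu(\{\sti\})$. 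Both arguments hinge on the same two ingredients (the polynomial-in-$n$ dependency radius and the $c^n$ lower bound on survival, both dominated by the doubly-exponential $K_n$), but the paper's block partition avoids second moments entirely and gives a cleaner bound. Your approach is more machinery for the same conclusion, and it has the mild drawback that you must verify the covariance split rather than get independence for free from disjoint boxes; on the other hand, a Paley--Zygmund argument would be more robust if one wanted to weaken exact independence to a mixing condition, which is relevant to the paper's open question about $\sigma$-mixing initial measures. One small redundancy: your closing remark about colonisation is unnecessary for this lemma as stated --- seeds turn into hearts at time $1$ regardless of membrane growth, and the lemma only asks for a heart symbol to be present, not for the ball to lie inside the colonised space --- though the observation does no harm.
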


\begin{proof}
Consider a heart in $x\in\Z^d$ at time $t_n$. Its survival until time $t_{n+1}$ only depends on the hearts that are present at distance $2n$ or $2n+1$. Inductively, the survival of an initialised heart until the $n$-th generation depends only on hearts located at distance at most $\frac{(2n+1)(2n+2)}{2}$: the survival of distant enough hearts is independent.

Denote:\[\nu_n=\max_{t_n\leq t\leq t_{n+1}}\mu\left(\left\{c\in\A^{\Z^d}:\porga(F^t(c)_0)=\stp\right\}\right).\]

Take the hypercube of side length $2K_n+1$ centred on $0$ and cut it into $\lambda_n=\left(\left\lfloor\frac{2K_n+1}{(2n+1)(2n+2)+1}\right\lfloor\right)^d$ hypercubes of size $(2n+1)(2n+2)+1$. Consider the centres of these hypercubes $x_1,\dots,x_{\lambda_n}$. Then we have:
\begin{align*}1-\max_{t_n\leq t\leq t_{n+1}}\mu\left(\left\{c\in\A^{\Z^d}:\exists x\in\Z^d, d_\infty(x,0)\leq K_n ,\porga(F^t(c)_x)=\stp\right\}\right) &\leq (1-\nu_n)^{\lambda_n}\end{align*} where the second step uses the last remark and the shift-invariance of $\mu$.

For any $k\in\N$, a living heart at time $t_{k-1}$ survives until generation $k$ if it wins every conflict it is engaged in. Using Lemma~\ref{lem:bound_proba_color}, we have $\nu_k\geq \alpha^{3^d-1}\nu_{k-1}$ for some fixed $0<\alpha\leq 1/2$. Then by recurrence $\nu_n\geq \alpha^{n(3^d-1)}\mu(\{\sti\})$.

  We conclude that $(1-\nu_n)^{\lambda_n}\leq \left(1-\alpha^{n(3^d-1)}\mu(\{\sti\})\right)^{\lambda_n} \to 0$ since $\lambda_n\alpha^{n(3^d-1)}\mu(\{\sti\})\to\infty$.

 \qed\end{proof}


\subsection{Individual organisms: internal metabolism}
The last group of layers is used to govern the internal metabolism of the organisms. In this section, we consider some organism and describe how it behaves during a generation.

\subsubsection{Computing}\label{sec:Computing}
%
%

In this section, we describe the computational layer using the alphabet $\Acomp$. Let $(w_n)_n$ be the uniformly computable sequence of patterns given as an hypothesis of the theorem. Our goal is to delimit a small computation space around the heart where each $w_n$ will be computed in succession. 

We use standard techniques to embed the time evolution of any Turing machine $TM = (Q,\Gamma,\#,q_{0},\delta,Q_{F})$ inside our cellular automaton. The alphabet used for the simulation is $(\Gamma\cup\#)\times(Q\cup\#)$: the left part contains the tape symbol, and the right part contains the current state for the cell where the head is located, and $\#$ everywhere else. Then each step of the Turing machine moves the head and modifies the tape around the head according to local information, which can be done through the local rule of a CA.

\paragraph{A brain around each heart} The alphabet $\Acomp$ is divided into 4 sublayers containing various computational processes taking place in parallel next to the heart, including Turing machines with a $d$-dimensional tape. Each layer can read the contents of another sublayer when indicated. Assume that we are at time $t=t_n$.

\begin{enumerate}
\item The first layer contains a binary counter keeping track of the current generation;
\item The second layer contains a binary counter keeping track of the current value of $t$, similarly to binary age counters;
\item The machine on the third layer computes the value of $t_{n+1}$, then keeps watch on the time counter on the second layer. When it reaches $t=t_{n+1}$, the generation counter is incremented by one, which triggers many other processes.
\item The machine of the fourth layer reads the generation counter and computes the hypercubic patterns $w_n$ along with its side length $k$. $w_n$ is output on the main layer.
\end{enumerate}

From now on, we fix $t_{n} = \sum_{k<n}2^{k^{d-\frac 14}}$.

\paragraph{Complexity analysis} We want to ensure that these computations can be performed between times $t_n$ and $t_{n+1}$ without leaving a hypercube centred on the heart of side length $n^{\frac{d-\frac 14}d}$, that is, that they can be performed in time $t_{n+1}-t_n = 2^{n^{d-\frac 14}}$ and space $n^{d-\frac 14}$.

To get rid of the multiplicative constant contained in the $O$ notation, we use the standard techniques of linear speedup and tape compression for Turing machines. For any fixed constant $C$, by grouping cubes $C^d$ tapes cells together in a single letter and performing $C$ computation steps at once, we can divide required time and space by $C$. As downside, the tape alphabet of the Turing machines increases exponentially (in $C$). 
\begin{description}
\item[First and second layers:] This is obvious for the generation counter. The time counter occupies a space $\lceil\log t \rceil\leq \lceil\log t_{n+1} \rceil \sim \log(2^{n^{d-\frac 14}}) = O(n^{d-\frac 14})$, and the multiplicative constant is removed by using a base-$b$ counter with $b$ large enough.

\item[Third layer:] Computing the value of $t_{n+1} = \sum_{k<n+1}2^{k^{d-\frac 14}}$ takes space and time $O(2^{n^{d-\frac 14}})$.

\item[Fourth layer:] Without loss of generality, $w_n$ satisfies the time and space constraints, as the following Lemma shows.

\end{description}

We also assume that $w_n\in\A^{[0,k]^d}$ for some $\frac 12n^{\frac {d-\frac 12}d}< k\leq n^{\frac{d-\frac 12}d}$, by replacing $w_n$ by concatenating copies of itself if necessary.\bigskip

\begin{lemma}
  Given a computable sequence $(w_n)_n$ of hypercubes, there exists another computable sequence $(w'_n=w_{g(n)})_n$ such that:
  \begin{itemize}
  \item $g:\N\to\N$ is surjective and non-decreasing;
  \item $w'_n$ is computable in time $O(2^{n^{d-\frac 14}})$ and space $O(n^{d-\frac 14})$.
  \end{itemize}
  
\end{lemma}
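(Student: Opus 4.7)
The plan is a standard padding argument: re-enumerate the sequence $(w_n)$ at a slower pace so that each new term $w'_n = w_{g(n)}$ can be computed within the stated resource bounds at index $n$. The idea is to use the ``budget'' of time $2^{n^{d-1/4}}$ and space $n^{d-1/4}$ to perform as many steps as possible of the uniform algorithm for $(w_k)$, and to advance $g$ only as far as this budget permits.

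Concretely, I would define $g$ by induction. Set $g(0) = 0$. At step $n \geq 1$, start from $k = g(n-1)$ and simulate the uniform algorithm producing $w_k, w_{k+1}, \ldots$ in succession, counting the time and space used, and stop as soon as either the time allotment $2^{n^{d-1/4}}$ or the space allotment $n^{d-1/4}$ is exhausted. Let $g(n)$ be the largest index $k \geq g(n-1)$ such that $w_k$ was fully produced within this budget; if no new index was reached, set $g(n) = g(n-1)$. By construction $g$ is non-decreasing, and $w'_n = w_{g(n)}$ is computable in time $O(2^{n^{d-1/4}})$ and space $O(n^{d-1/4})$, the constants absorbing the overhead of the outer simulator and the bookkeeping of the two resource counters.

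Surjectivity follows from the fact that for any fixed $k$ the time $T(k)$ and space $S(k)$ required by the uniform algorithm to produce $w_k$ are finite, while both $2^{n^{d-1/4}}$ and $n^{d-1/4}$ tend to infinity. Hence for all sufficiently large $n$, the algorithm has enough resources to reach index $k$, so $g(n) \geq k$ eventually. Combined with $g(0) = 0$ and monotonicity, every natural number is attained by $g$.

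The main obstacle is making sure that the outer budgeting simulator itself fits within the stated asymptotic bounds rather than inflating them by a non-constant factor. This is handled exactly as the paragraph immediately preceding the lemma suggests: tape compression and linear speedup for Turing machines let us absorb any constant-factor overhead from the nested simulation, the two auxiliary counters, and the comparison with the thresholds $2^{n^{d-1/4}}$ and $n^{d-1/4}$ into a larger tape alphabet, preserving the bounds $O(2^{n^{d-1/4}})$ and $O(n^{d-1/4})$.
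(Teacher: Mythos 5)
Your construction has a genuine gap in the surjectivity argument. You define $g(n)$ as the \emph{largest} index reached within the budget at step $n$, which means $g$ is allowed to jump by more than $1$ between consecutive values of $n$. From "$g(n)\to\infty$", "$g(0)=0$" and "non-decreasing" you conclude surjectivity, but this does not follow: a non-decreasing function that starts at $0$ and tends to $\infty$ can still skip values (e.g.\ $g(0)=0$, $g(1)=2$). Surjectivity is not a decorative hypothesis here — the next sequence must trace out the same polygonal path $\bigcup_n[\meas{w_n},\meas{w_{n+1}}]$ as the original, and a skipped index would in general change that path.

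The paper's proof avoids this by being less greedy: at step $n$ the machine attempts only the \emph{next} index $g(n-1)+1$, within a time budget $t_n^{\lim}=n^{-d}2^{n^{d-1/4}}$ and a space budget of side $n^{(d-1/4)/d}$, and either succeeds (setting $g(n)=g(n-1)+1$) or gives up and repeats (setting $g(n)=g(n-1)$). Advancing by at most one per step is exactly what makes surjectivity automatic once one knows the budgets eventually suffice. If you replace your "largest index reached" with "$g(n-1)+1$ if it fits, else $g(n-1)$", your argument goes through. A secondary point: your sketch starts "from $k=g(n-1)$" without saying how $g(n-1)$ and $w_{g(n-1)}$ are recovered; the paper makes this a recursive call to $\phi(n-1)$ and verifies the geometric series $\sum_{i<n}2^{i^{d-1/4}}=O(2^{n^{d-1/4}})$ so that the recursion stays within the time bound. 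Invoking linear speedup handles constant factors but not this recursive overhead, so it is worth spelling out.
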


\begin{proof}
  Consider a TM $\phi_0$ that computes the sequence $(w_n)_n$. We describe another machine $\phi$ on two tapes that computes $(w'_n)_n$. 
  
  We define the computation of $\phi$ on input $n$ inductively:
  \begin{itemize}
  \item compute $\phi(n-1)$ to obtain the value of $w'_{n-1}$ and $g(n-1)$ (they may be respectively empty and $0$, for example when $n=0$);
  \item draw an hypercube of side $s_n^{lim} = n^{\frac 1d(d-\frac 14)}$;
  \item on the second tape, compute and store the value $t_n^{lim} = n^{-d}2^{n^{d-\frac 14}}$ and initialise a counter with value $0$;
  \item simulate $\phi_0$ on input $g(n-1)+1$. Between each simulated step, increment the counter and compare it with $t_n^{lim}$;
  \item if the counter reaches the value $t_n^{lim}$ before $\phi_0$ halts, output $\phi(n-1)$;
  \item if on the other case $\phi_0$ halts first, output $w'_{n}$ (the output of $\phi_0$) and $g(n)=g(n-1)+1$.
  \end{itemize}

Hence $g$ is surjective and non-decreasing, and $(w'_n)_n$ is by construction computable in space $n^{d-\frac 14}$. Computing the value of $s_n^{lim}$ and $t_n^{lim}$ can be done in $O(2^{n^{d-\frac 14}})$ operations. Since $t_n^{lim}$ and the counter have length less than $n^d$, each incrementation and comparison step takes $O(n^d)$ steps at most. Therefore we have $O(n^dt_n^{lim}) = O(2^{n^{d-\frac 14}})$ operations outside of the recursive call, and the recursive call takes $O(\sum_{i}^{n-1}2^{i^{d-\frac 14}})=O(2^{n^{d-\frac 14}})$ operations.
\qed\end{proof}

To conclude, the described computations are doable within these time and space constraints, and $w_n$ is computed before time $t_{n+1}$. At time $t_{n+1}$ the second machine enters a special set of states that triggers various processes: organism-building signals, body-building, and the object of the next section, a copying process that will write concatenated copies of the pattern $w_n$ all over the main layer of the territory of the organism.

The alphabet $\Acomp$ is thus $\{0,1,2,\Bl\}\times \{0,1,2,\Bl\}\times(Q_3\cup\#)\times (\Gamma_3\cup\#)\times(Q_4\cup\#)\times (\Gamma_4\cup\#)$, where $Q_i,\Gamma_i$ are the state space and the tape alphabet of (the compressed version of) the $i$-th Turing machine described above.

\subsubsection{Copying}

The copy layer aims at copying the pattern $w_n$ output by the computational layer on the whole territory of the organism. In this section, auxiliary symbols belong to the copy layer $\Acopy$ but the pattern is written in the main layer with alphabet $\B$.

\paragraph{Writing grid} Remember that we assume $w_n\in\A^{[0,k]^d}$ for some $\frac 12 n^{\frac {d-\frac 12}d}< k\leq n^{\frac{d-\frac 12}d}$. Assume the central heart is located at $0$ for readability, and that the borders of $w_n$ have been marked with a special symbol $\stg$ (on the copy layer) by the Turing machine. The copying process relies on an (imaginary) cubic grid of side length $k$ that covers the whole territory of the organism. Starting from the cells centred on the heart, the pattern $w_n$ is copied in each cell of this grid passing from neighbour to neighbour, through a translation of vector $ke_j$ or $-ke_j$ for each $1\leq j\leq d$.

For some coordinates $i\in\Z^d$, define the corresponding grid element $\Sigma_i=\{\sum_{1\leq j\leq d}\alpha_je_j:\forall 1\leq j\leq d, ki_j\leq \alpha_j\leq ki_j+k\}$, and $\overline{\Sigma_i}$
its border (extremal cells). Notice that the computed pattern is supported by $\Sigma_{0,\dots, 0}$.

\paragraph{Local copy operation} For $u\in\mathcal{U}nit(d)$ and $i\in\Z^d$, we define the \emph{copy operation} $\mathcal{C}_i(u)$ that copies the contents of the main layer from $\Sigma_i$ to $\Sigma_{i+u}$. It consists in simulating a Turing machine (see previous section) that receives as input $k$ the side length of $w_n$ and works as follows:
 
 \begin{description}
 \item[Reproducing the borders:] The first step is to write $\stg$ in every cell of $\overline{\Sigma_{i+u}}$. It then travels to the coordinate $k(i+u)$ and builds an hypercube of symbols $\stg$ of side length $k$ (corresponding to $\overline{\Sigma_{i+u}}$). This takes $O(k^d)$ time steps. If the new hypercube is not entirely included in the territory of the organism, the copy process stops.
 \item[Reproducing the pattern:] The second step is to copy the pattern letter by letter. The machine copies each letter in lexicographic order, marking with a symbol letters already copied. Each letter needs at most $O(k)$ steps to be copied, so the whole process takes $O(k^{d+1})$ steps.
 \item[Cleaning the auxiliary states:] The third step is to remove all the auxiliary states that remain on the tape in the original grid hypercube $\Sigma_i$ (including $\stg$). This is done by going through all $k^d$ cells of $\Sigma_i$, taking $O(k^d)$ steps.
 \item[Selecting heirs:] The process spawns new copy processes from the hypercube $\Sigma_{i+u}$, transmitting along the value of $k$. The direction of the next processes are the following:
    \[\mathcal{C}_i(u)\rightarrow \left\{\begin{array}{ll}\{C_{i+u}(v) : v = \lambda_je_j + \sum_{k\neq j}\lambda_ke_k, \lambda_k\in\{-1,0,+1\}\}&\mbox{if }u=\lambda_je_j, \lambda_j\in\{-1,+1\} \\C_{i+u}(u)&\mbox{otherwise}\end{array}\right.\]
Those new processes are performed in parallel by duplicating the copy layer $2^d$ times.
 \end{description}
 
At the initial step, it is enough to trigger a copy process in all directions $u\in\mathcal{U}nit(d)$. The copying operations then progressively fill the whole organism, as can be seen in Figure~\ref{fig:copying}. 

 \begin{figure}[htbp] \centering
  \includegraphics[width=6cm]{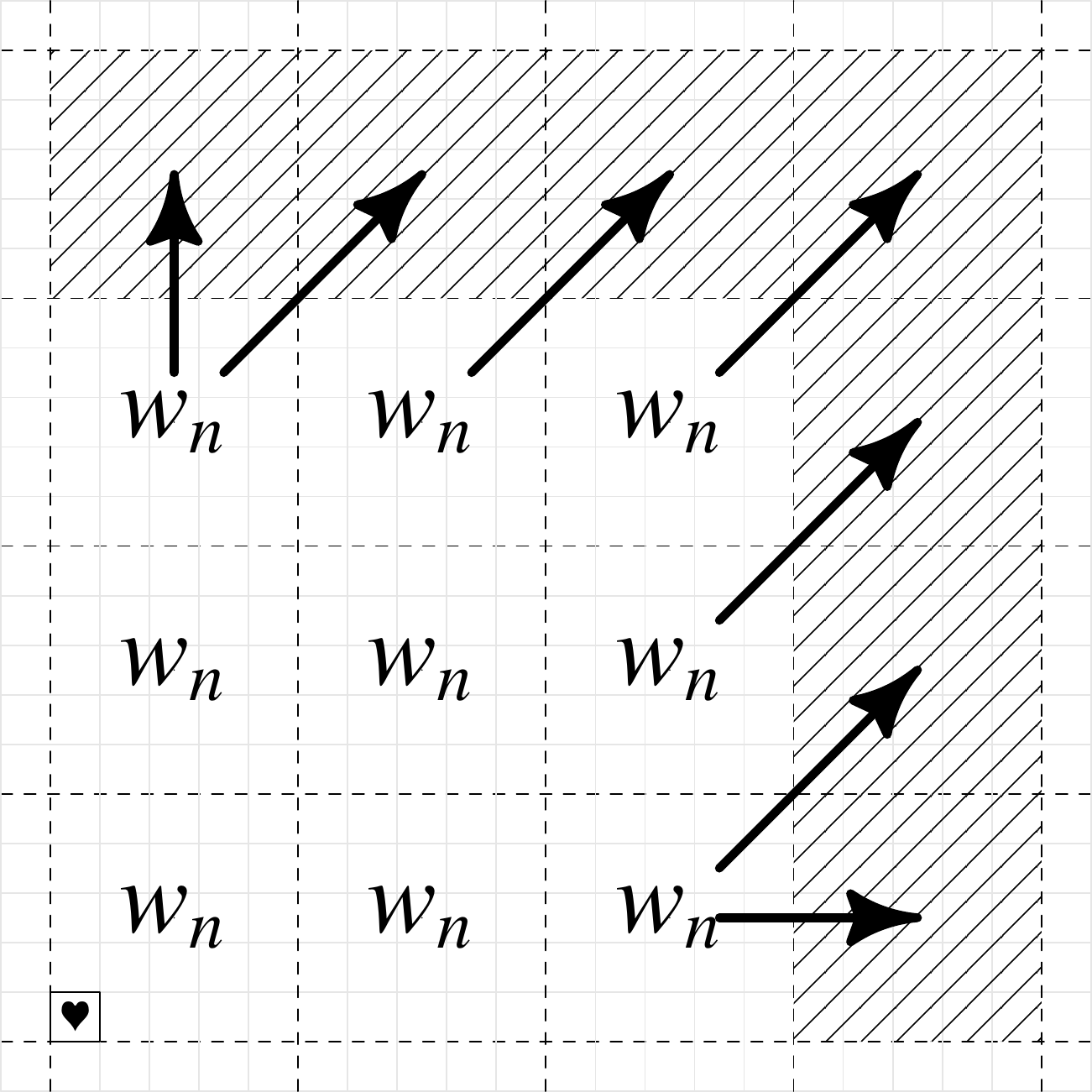}
  \caption{In this $2$-dimensional example, the pattern $w_n$ is copied from the heart of the organism towards its boundaries in successive steps.}
  \label{fig:copying}
\end{figure}

Each copying operation takes $O(k^{d+1})$ steps, and the active copying operations expand outward from the heart as a (thick) hypercube. Therefore, if the radius of the organism is $r$, the total time needed to finish the copying process is $\frac rk\cdot O(k^{d+1})$. We can take $r\leq 2^{n^{d-\frac 12}}$ by Lemma~\ref{lem:density_hearts} and $k\leq n^{\frac{d-\frac 12}d}$, which gives a total time of $O(n^d2^{n^{d-\frac 12}}) = O(2^{n^{d-\frac 14}})$. Lowering if needed the multiplicative constant by the linear speedup theorem, we see that the process ends before time $t_{n+1}$.

\subsection{Proof of the main theorem}

We first prove that the density of auxiliary states tend to 0 as time tends to infinity, which ensures they are not charged by any limit measure.

\begin{lemma}
  \label{lem:density_bord}
  Border symbols $\std$ have negligible density asymptotically, i.e., for any nondegenerate Bernoulli measure $\mu$:
  \[F^t\mu\left(\left\{c\in\A^{\Z^d}: \porga(c)_0=\std\right\}\right)\to_t 0.\]
\end{lemma}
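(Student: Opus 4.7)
The plan is to bound the density of $\std$ cells, which by shift-invariance equals $F^t\mu(\{c:\porga(c)_0=\std\})$, by splitting $\Z^d$ into the colonised part and its complement.

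For the uncolonised part, by Lemma~\ref{lem:life_supremacy}, a cell is uncolonised at time $t$ iff no viable seed lies within $\dinf$-distance $1+\sqrt{t}$. Since viable seeds have positive density under any non-degenerate Bernoulli $\mu$ and the local event ``viable seed at $y$'' depends only on cells within $\dinf$-distance $5$, a standard tail bound gives that the density of uncolonised cells is at most $\exp(-c\,t^{d/2})$, which vanishes as $t\to\infty$. Hence any $\std$ contribution from this region vanishes as well.

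For the colonised part, I exploit the fact that $\std$ symbols are exactly the interfaces between the territories of distinct surviving hearts. For $t\in[t_n,t_{n+1}]$, any two surviving hearts are at $\dinf$-distance at least $2n+1$: a closer pair would be in conflict by Lemma~\ref{lem:conflict_sim} and one would have died at time $t_n$. Hence every territory $T$ contains the cube of side $2n+1$ centred at its heart, so $|T|\geq(2n+1)^d$. Lemma~\ref{lem:triangul} says $T$ is a discrete star domain around its heart, so writing $r(\theta)$ for its radial extent in direction $\theta\in\mathcal{U}nit(d)$ one has $|T|\asymp\sum_\theta r(\theta)^d$ and the surface $|\partial T|\lesssim\sum_\theta r(\theta)^{d-1}$. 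H\"older's inequality with exponents $(\tfrac{d}{d-1},d)$ then gives
\[
|\partial T|\leq C_d\Bigl(\sum_\theta r(\theta)^d\Bigr)^{(d-1)/d}\leq C'_d\,|T|^{(d-1)/d}\leq \frac{C'_d}{2n+1}\,|T|,
\]
and since each $\std$ cell lies between at most two territories, summing over all territories in a large box yields a density bound of $2C'_d/(2n+1)$ for $\std$ cells inside the colonised space.

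Combining the two regions and invoking shift-invariance,
\[
F^t\mu\bigl(\{c:\porga(c)_0=\std\}\bigr)\leq \frac{2C'_d}{2n+1}+\mu\bigl(\text{$0$ uncolonised at time $t$}\bigr)\xrightarrow[t\to\infty]{}0.
\]
The one genuinely delicate point is the H\"older surface-to-volume estimate for discrete star domains: to make it rigorous I would fix a rule mapping each cell of $T$ to a distinguished radial direction so that $|T|=\Theta(\sum_\theta r(\theta)^d)$ with a dimensional constant, and similarly bound $|\partial T|\lesssim\sum_\theta r(\theta)^{d-1}$; everything else reduces to shift-invariance and the combinatorics already used above.
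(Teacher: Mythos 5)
Your strategy (split by colonised/uncolonised space, then bound the surface-to-volume ratio of territories) is the same as the paper's, and the uncolonised part is fine, but the key surface-to-volume step is wrong and cannot be repaired in the way you propose. The claim $|T|\asymp\sum_{\theta\in\mathcal{U}nit(d)}r(\theta)^d$ is false, and with it the isoperimetric-type bound $|\partial T|\leq C_d|T|^{(d-1)/d}$. Take $d=2$ with hearts at $(0,0)$, $(\pm(2n+1),0)$, $(0,\pm L)$ for $L\gg n$: the territory $T$ of the central heart is roughly a $(2n+1)\times L$ strip, the radial extent in directions $(0,\pm1)$ is $\approx L/2$, so $\sum_\theta r(\theta)^2\gtrsim L^2$ while $|T|\approx(2n+1)L\ll L^2$, and $|\partial T|\approx 2L$ whereas $|T|^{1/2}\approx\sqrt{(2n+1)L}$, so $|\partial T|/|T|^{1/2}\to\infty$ as $L\to\infty$. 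This is a perfectly legal configuration. No choice of a ``distinguished radial direction'' per cell can recover $|T|=\Theta(\sum_\theta r(\theta)^d)$; if it could, it would imply the isoperimetry that this example refutes. So the step you yourself flagged as delicate is the one that breaks.

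What the paper does instead is to bound the \emph{ratio} $|\partial T|/|T|$ directly, with no detour through $|T|^{(d-1)/d}$. It first proves a structural sub-lemma: the common border of two hearts is contained in a finite union of hyperplanes of $\mathcal{H}yp(d)$ (the discrete $\dinf$-bisector is piecewise hyperplanar). Then for each hyperplanar border piece $s$ between hearts $x_0$ and $x_1$, since $\dinf(x_0,x_1)\geq 2n$ one has $\dinf(x_\iota,s)\geq n$ for some $\iota\in\{0,1\}$, and the polytope spanned by $s$ and $x_\iota$ lies inside the territory of $x_\iota$ (this is exactly where Lemma~\ref{lem:triangul} is invoked) with volume at least $\frac nd\,A(s)$. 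Summing over the disjoint border pieces and their disjoint associated polytopes gives $|S_{\stdsub}(t)|/|\overline{S}_{\stdsub}(t)|\leq d/n$ for every configuration. In your strip example this yields the correct ratio $\approx 2/(2n+1)$: it is small because each border piece is far from its heart, not because the territory is ``round.'' You would also need the hyperplane sub-lemma to make your estimate $|\partial T|\lesssim\sum_\theta r(\theta)^{d-1}$ rigorous. Finally, the paper passes from the configuration-wise density bound to the $F^t\mu$-probability statement via Birkhoff's ergodic theorem, a step your write-up should state explicitly.
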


\begin{proof}
  By Lemma~\ref{lem:life_supremacy}, we can consider only the cells in the colonised space, i.e. inside a living membrane.
  Given a configuration $c$ and some time $t_n\leq t<t_{n+1}$ during the $n$-th generation, denote $S_{\stdsub}(t)=\{x\in\Z^d: \porga(F^t(c)_x)=\std\}\cap \Col_t(c)$ the set of colonised cells containing a border, and $\overline{S}_{\stdsub}(t)$ the complement of the previous set. We show that there exists a constant $\lambda$ such that $\frac{|S_{\stdsub}(t)|}{|\overline{S}_{\stdsub}(t)|}\leq \frac{\lambda}{n}$. This property being true for every initial configuration $c$, the lemma follows using Birkhoff's theorem.

To do this, we show in the next Lemma that borders between organisms can be partitioned into (subsets of) hyperplanes. To each such hyperplane subset bordering two organisms, we associate some volume inside the territory of one of the organisms that is $\frac n\lambda$ times larger than the subset itself. 

\begin{lemma}
  The common border of two hearts is included in a (finite) union of hyperplanes of $\mathcal{H}yp(d)$. (Recall $\mathcal{H}yp(d)$ is the set of hyperplanes with a normal vector in $\mathcal{U}nit(d)$.)
\end{lemma}

\begin{proof}
  Let $x$ and $x'$ be two cells containing a heart each. If no other heart existed in the whole space, any point $y$ in the border between these two hearts would verify $|d_\infty(x,y) - d_\infty(x',y)|\leq 1$. Taking $i$ and $j$ such that $d_\infty(x,y)=|y_i-x_i|$ and $d_\infty(x',y)=|y_j-x'_j|$, this border would be included in $\displaystyle \bigcup_{\substack{1\leq i,j\leq d\\\varepsilon=\pm 1}}H_{i,j}^\varepsilon$, where $H_{i,j}^\varepsilon$ are defined as:
  
\[y\in H_{i,j}^\varepsilon \Longleftrightarrow \left\{
\begin{array}{ll}
|2y_j-x_j-x'_j|\leq 1&\quad\mbox{if }i=j\mbox{ and }x_j = x'_j\\
|y_i-x_i - \varepsilon(y_j-x'_j)|\leq 1 &\quad \mbox{otherwise} \end{array}
\right.\]
All these sets are unions of one or two hyperplanes of $\mathcal{H}yp(d)$ (depending on the parity of $x_i - \varepsilon x'_j$). In the presence of other hearts, the border between $x$ and $x'$ is a subset of this "ideal border", which proves the Lemma.
\qed\end{proof}

Given two hearts at cells $x_0$ and $x_1$, denote $B(x_0,x_1)$ the set of cells corresponding to their common border. Partition this set into a finite collection $\{H_1,\dots, H_k\}$ of disjoint subset of hyperplanes according to the previous lemma. For each such $H_i$, as $d_\infty(x_0,x_1)\geq 2n$, we have either $d_\infty(x_0,H_i)\geq n$ or  $d_\infty(x_1,H_i)\geq n$.

 Take any $i$ and any finite subset $s$ of $H_i$. Denote $A(s)$ the area of $s$ and $V_0$ and $V_1$ the volumes of the $d$-polytopes limited by the surface $s$ and the points $x_0$ and $x_1$, respectively. Then $V_{\iota}=\frac{1}{d}d_\infty(x_{\iota},H_i)A(s)$ for each $\iota\in\{0,1\}$. Denote $V(s)=V_0+V_1$, then $\frac{A(s)}{V(s)}\leq \frac{d}{n}$.

  We now do this operation for every organism, that is split $S_{\stdsub}(t)$ into a collection $\mathcal{S}$ of disjoint hyperplanar surfaces that belong to the common border of two organisms. For every two such different surfaces, the corresponding volumes inside organisms are also disjoint, hence
  {\begin{align}
    \frac{|S_{\stdsub}(t)|}{|\overline{S}_{\stdsub}(t)|}&\leq \frac{\sum_{\mathcal{S}}A(s)}{\sum_{\mathcal{S}}V(s)}\\
    &\leq \frac{d}{n}
  \end{align}}\qed
\end{proof}

\begin{lemma}
\label{lem:no_auxiliary_states}
For any nondegenerate Bernoulli measure $\mu$ and $z\in\Z^d,$
\[\mu\left(F^t(c)_z\in\A\setminus\B\right)\underset{t\to\infty}\longrightarrow 0.\]
\end{lemma}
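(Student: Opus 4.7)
By $\sigma$-invariance of $F^t\mu$, it suffices to prove the statement for $z=0$. My plan is to decompose the event $\{F^t(c)_0\in\A\setminus\B\}$ into several sub-events and bound each separately, combining with the Birkhoff-type averaging argument already used in the proof of Lemma~\ref{lem:density_bord} to convert spatial density bounds into bounds on the probability at a single cell.

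First, the event that cell $0$ is not in the colonised space $\Col_t(c)$ forces, by Lemma~\ref{lem:life_supremacy}, the absence of any viable seed within $\dinf$-distance $1+\sqrt{t}$ of $0$. Since $\mu$ is non-degenerate, $p:=\mu(\pbirth=\sti^V)>0$, so this probability is at most $(1-p)^{(2\sqrt t+1)^d}\to 0$. Cells occupied by membrane symbols form the boundaries of the hypercubic colonies of side $\Theta(\sqrt t)$, and so have spatial density $O(1/\sqrt t)$ in the colonised space; the uninitialised (zombie) membranes initially present are eventually killed by initialised ones thanks to Fact~\ref{lem:minimal_age}, since their age counters are strictly larger in any comparison. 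Inter-organism borders $\std$ and $\stdp$ are already controlled by Lemma~\ref{lem:density_bord}.

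The remaining contributions come from cells lying inside an organism's territory but carrying a non-blank symbol on a non-main layer, or whose main layer is still blank. By Lemma~\ref{lem:density_hearts}, with probability tending to $1$, cell $0$ lies in a healthy organism of generation $n$, whose territory contains a hypercube of side at least $2n+1$ around its heart. Within such a territory the brain occupies only $O(n^{d-1/4})$ cells, the conflict-resolution body is a hypercubic frame of $O(n^{d-1})$ cells, and the active copying front is a similarly thin shell; the ratio of each of these to the organism volume $(2n+1)^d$ vanishes with $n$, and the averaging argument transfers this into a vanishing bound on the probability at cell $0$.

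Finally, to rule out a blank main layer at cell $0$, I would invoke the complexity analysis of Section~\ref{sec:Computing}: the copy process for $w_n$ triggered at time $t_{n+1}$ reaches every cell at distance $r$ from the heart within $O(rk^d)$ steps, which for $r\le K_{n+1}$ and $k\le n^{(d-1/2)/d}$ is negligible compared with $t_{n+2}-t_{n+1}$. Outside a brief transient at the beginning of each generation, cell $0$'s main layer carries $w_n\in\B$; during that transient it still carries $w_{n-1}$ from the previous generation, which is also in $\B$ for large enough $n$. The main obstacle, in my view, is precisely this uniform-in-$t$ bookkeeping across the transition between successive generations --- matching at every time the unfinished copying of the current generation with the surviving pattern of the previous one --- so that the probability bound holds for every sufficiently large $t$ rather than only on a subsequence.
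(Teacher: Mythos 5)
Your proposal takes essentially the same route as the paper: reduce to $z=0$ by $\sigma$-invariance, use Lemma~\ref{lem:life_supremacy} to dismiss the uncolonised space and the membrane/counter region as events whose probability vanishes because they force the nearest viable seed to be far from the origin, then handle the remaining auxiliary symbols inside organisms by a spatial-density count converted to a single-cell probability via Birkhoff's theorem, with Lemma~\ref{lem:density_bord} for inter-organism borders and Lemma~\ref{lem:density_hearts} to restrict attention to healthy organisms. Your per-layer size estimates (brain $O(n^{d-1/4})$, body shell $O(n^{d-1})$, copying front a thin shell) match the paper's, although the paper's treatment of the border-building and copying fronts is more careful (it uses the star-shape property from Lemma~\ref{lem:triangul} to control the density of the expanding signal shell inside a territory of possibly very large radius, rather than comparing only to the $(2n+1)^d$ body hypercube).

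One genuine difference worth flagging: you explicitly raise the blank-main-layer case (cells whose last six coordinates are blank \emph{and} whose seventh is $\Bl$, which belong to $\A\setminus\B$). The paper's proof of this lemma does not address that case at all --- it handles hearts, computing, bodies, border signals and copying symbols, but the paragraph on the main layer is missing, and the bound on cells not yet reached by the copying front is only supplied later, inside the proof of Lemma~\ref{lem:close_to_polygonal_path}, via the estimate that cells at distance more than $\frac{t_n-t_{n-1}}{C(n)}\sqrt n$ from the nearest heart form a $\mu$-negligible set by Lemma~\ref{lem:density_hearts}. Your instinct to invoke the complexity analysis of Section~\ref{sec:Computing} plus Lemma~\ref{lem:density_hearts} is exactly the right repair, and the uniform-in-$t$ bookkeeping you worry about is resolved by the shell argument already present in the paper (during $[t_n+kC(n),\,t_n+(k+1)C(n)]$ the cells at grid-distance $\leq k$ from the heart carry $w_n$ and those at grid-distance $>k+1$ still carry $w_{n-1}$, leaving a transient set of density $O(1/\sqrt n)$ uniformly over the generation). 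So the concern you flag is real but is a gap in the exposition rather than in the method, and your sketch points at the correct ingredients to close it.
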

\begin{proof} We handle each layer separately.
\paragraph{Uncolonised space and membranes} First, by Lemma~\ref{lem:life_supremacy}, we can see that $c_0$ belongs to the uncolonised space at time $t$ only if the nearest viable seed at time 0 is at distance more than $\sqrt{t}$. For the same reason, $c_0$ can be part of a living membrane or a related process (age counter, respiration process, comparison process) only if the nearest viable seed is at distance more than $\sqrt{t} - \log t$. Since a viable seed appear with a nonzero probability, the probability of this event tends to 0 as t tends to infinity.\bigskip{}

It remains to handle symbols appearing inside the colonised space on the layers dealing with internal affairs of the colonies: organism, evolution, computing, copying and main layers. By Birkhoff's ergodic theorem, it is equivalent to prove that the density of auxiliary states in a configuration tends to 0 almost surely when time tends to infinity.

\paragraph{Hearts, computing symbols} In the colonised space hearts $\stp$ must be issued from a seed, and as explained in Section~\ref{sub:colonies} they each have at time $t_n$ a body, which are non-overlapping hypercubes of side $2n+1$ centred on the heart (more precisely, they can overlap shortly but are destroyed before the next $t_n$). Thus the density of hearts $\stp$ in $c$ between times $t_n$ and $t_{n+1}$ is less than $\frac 1{(2n-1)^d}$. Since the computing process taking place around the heart is contained in a hypercube of side $\sqrt n$, the density of cells with nonempty computing layer is almost surely less than $\frac 1{(2n-1)^{d/2}}$ in this period. 

\paragraph{Bodies and bodybuilding signals} Body symbols form the surface of an hypercube of side $2n+1$ (when it is fully built) or less (during the construction), and therefore there are less than $2d(2n+1)^{d-1}$ such symbols for each heart. The impulses used to grow the body being sent one at a time, they occupy at most as much space as the body itself at any given time. Therefore all those symbols have density less than $2d(2n+1)^{d-1}\cdot \frac 1{(2n-1)^d} = O\left(\frac 1n\right)$.

\paragraph{Borders and border-building signals} Borders $\std$ were handled in Lemma~\ref{lem:density_bord}. We use a similar argument to show that the density of symbols in signals used to build borders is asymptotically negligible. The signal is born around the heart and progresses at speed one. Therefore, $m$ steps after its birth, the set of cells in the organism containing the signal is an hypercube of side $2m+1$ centred on the heart (intersected with the inside of the organism). In particular, in an organism of healthy size, the signal sent at time $t_n$ has disappeared before time $t_n+2^{n^d}\leq t_{n+1}$, so at most one signal appears in a given organism at the same time.

If $m\leq n$, since the organism contains at least $n^d$ cells, signal symbols have density less than $\frac {2d(2m+1)^{d-1}}{n^d} = O(\frac 1n)$. 
If $m>n$, notice that for each cell $z$ of the organism satisfying $d_\infty(\stp,z)=m$, the line between $\stp$ and $z$ is contained in the organism (by Lemma \ref{lem:triangul}) and does not contain other signal symbols (since its distance to the heart is less than $m$). For any part $P$ of the surface area of the hypercube which is inside the organism, the convex hull of $P$ and $\stp$ is inside the organism as well and its interior does not contain any symbol. The proportion of $P$ to the total surface area is the same as the proportion of its convex hull to the total volume. Therefore the symbol density is at most $\frac {2d(2m+1)^{d+1}}{m^d} = O\left(\frac 1n\right)$ (since $m>n$).

\paragraph{Copying processes}The copying grid $\stg$ is simply a grid of side length $\sqrt{n}$, and therefore the density of symbols $\stg$ is less than $\frac {2d(2n+1)^{d-1}}{n^d} = O\left(\frac 1n\right)$. Each copying operation contains symbols in at most two squares at any given time: one from which it copies and one to which it copies. Furthermore, because all copying operations take the same amount of time $C(n)$ to copy one square, the whole copying process of an organism in the time interval $[t_n+kC(n), t_n+(k+1)C(n)]$ is contained in the squares located at "distance" $k$ and $k+1$ from the heart, i.e. the cells whose distance from the heart is between $k\sqrt n$ and $(k+2) \sqrt n$. Furthermore, by Lemma~\ref{lem:density_hearts}, the probability that an organism contains only one copying process tends to 1. The previous argument (used for the border-building signals) shows that copying symbols have density at most $O\left(\frac{2\sqrt n}n\right) = O\left(\frac 1{\sqrt{n}}\right)$.
\qed\end{proof}

From this lemma, we see that no limit measure can assign a non-zero probability to any pattern with a non-empty auxiliary layer.

\begin{lemma}
\label{lem:close_to_polygonal_path}
\[\dm(F^{t_n}\mu, \meas{w_n})\underset{n\to\infty}\longrightarrow 0\hspace{1cm} \mbox{and}\hspace{1cm} \max_{t_n\leq t\leq t_{n+1}} \dm\left(F^t\mu, [\meas{w_n}, \meas{w_{n+1}}]\right) \underset{n\to\infty}\longrightarrow 0.\] 
\end{lemma}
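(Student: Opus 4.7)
The plan is to estimate $F^t\mu([u])$ for each cubic pattern $u$ of fixed support size and then aggregate using the truncation structure of $\dm$. First I would reduce to the main layer: if $u \in \A^{[0,m]^d}$ has a non-blank symbol on any auxiliary layer at some coordinate, then $\meas{w_n}([u]) = \meas{w_{n+1}}([u]) = 0$ (both measures being supported on configurations with blank auxiliary layers), while Lemma~\ref{lem:no_auxiliary_states} applied via a union bound over the $(m+1)^d$ coordinates of $\supp(u)$ gives $F^t\mu([u])\to 0$. Hence I only need to handle $u \in \B^{[0,m]^d}$.

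Next, using shift-invariance of $F^t\mu$, I would rewrite
\[F^t\mu([u]) = \mathbb{E}_\mu\left[\frac{1}{N^d}\sum_{i\in[0,N-1]^d} \mathbf{1}_{F^t(c)\in[u]_i}\right]\]
for arbitrary $N$, and read this as the expected empirical frequency of $u$ in a window. I would then partition the cells $i$ of the window into \emph{good} cells (those for which $i+[0,m]^d$ lies entirely inside a healthy organism and has blank auxiliary layers everywhere) and \emph{bad} cells. By Lemmas~\ref{lem:life_supremacy}, \ref{lem:density_hearts} and \ref{lem:no_auxiliary_states}, and a union bound over the finite support, the density of bad cells tends to $0$ as $n\to\infty$, uniformly in $t\in[t_n,t_{n+1}]$.

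The core observation is that inside a healthy organism at time $t\in[t_n,t_{n+1}]$, the main layer is, up to a negligible set of cells near the currently active copying hypercube, a mosaic consisting of two regions: a region where copying of $w_{n+1}$ has already replaced the previous $w_n$ content, and a region still containing $w_n$ on the grid of the previous generation. Both regions are exact translates of the periodic configurations $\mathstrut^\infty w_n^\infty$ and $\mathstrut^\infty w_{n+1}^\infty$ respectively, so inside each region the frequency of $u$ is precisely $\meas{w_n}([u])$ or $\meas{w_{n+1}}([u])$ by definition of the associated shift-averaged Dirac measures. Let $\beta_t$ denote the (shift-invariant) probability that a good cell has already been reached by the copying of $w_{n+1}$ and $\alpha_t=1-\beta_t-o(1)$ the probability it still carries $w_n$. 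Combining the pieces:
\[F^t\mu([u]) = \alpha_t\,\meas{w_n}([u]) + \beta_t\,\meas{w_{n+1}}([u]) + \varepsilon_n(u,t),\quad \text{with }\ \max_u |\varepsilon_n(u,t)|\xrightarrow[n\to\infty]{}0.\]
At $t = t_n$, the copying of $w_{n+1}$ has not yet started (it is triggered at the beginning of generation $n+1$), so $\beta_{t_n}\to 0$ and the first assertion $\dm(F^{t_n}\mu,\meas{w_n})\to 0$ follows. For general $t\in[t_n,t_{n+1}]$, the point $\alpha_t\meas{w_n}+\beta_t\meas{w_{n+1}}$ lies in $[\meas{w_n},\meas{w_{n+1}}]$, proving the second assertion.

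To finish in the metric $\dm$, I would fix $\varepsilon>0$, choose $M$ with $\sum_{m>M}2^{-m}<\varepsilon/2$, and note that $\A^{[0,m]^d}$ is finite for each $m\leq M$; the uniform estimate from the preceding paragraph then bounds the finite sum by $\varepsilon/2$ for $n$ large. The main obstacle is the control of the error term $\varepsilon_n(u,t)$: one must check that boundary effects are uniformly negligible in three distinct places, namely near the borders of organisms (handled because organism radii grow at least linearly in $n$ while $m$ is fixed), near the borders of the copying grid cells (size $k_n\to\infty$), and, most delicately, along the moving boundary between the already-copied $w_{n+1}$ region and the remaining $w_n$ region inside a single organism. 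The latter is a surface within an expanding hypercube of side $O(2^{n^{d-1/2}})$, so its contribution is $O(n^{-1})$ times the volume, hence negligible.
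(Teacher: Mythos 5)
Your proposal is correct and follows essentially the same route as the paper's proof: reduce to patterns with blank auxiliary layers via Lemma~\ref{lem:no_auxiliary_states}, use shift-invariance to express $F^t\mu([u])$ as a barycentre of $\meas{w_n}([u])$ and the next pattern's measure (weighted by the distance to the nearest heart relative to the progress of the expanding copying front), and control boundary effects through the density lemmas before aggregating over patterns via the tail of $\dm$. Your internal indexing has a small slip (you say the copying of $w_{n+1}$ ``is triggered at the beginning of generation $n+1$'' while simultaneously having the $w_n\to w_{n+1}$ transition occur during $[t_n,t_{n+1}]$), but the paper's own proof has a similar off-by-one ambiguity, and neither affects the substance of the argument.
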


\begin{proof}
Take any finite square pattern $u\in\A^{[0,\ell]^d}$. From Lemma~\ref{lem:no_auxiliary_states} and by $\s$-invariance, we can see that if $c$ is drawn according to $\mu$ then the probability that $F^t(c)_{[0,\ell]^d}$ has any part outside the colonised space or with a nonempty auxiliary layer is $O\left(\frac 1{\sqrt n}\right)$. Inside any organism at time $t=t_n$, the main layer contains concatenated copies of $w_{n-1}$ in all directions except for those cells at distance more than $\frac{t_n-t_{n-1}}{C(n)}\sqrt{n}$ from the heart (see the last paragraph of the previous proof), which forms an asymptotically negligible set by Lemma~\ref{lem:density_hearts}. By $\s$-invariance, we obtain that:
\[|F^{t_n}\mu([u]) - \meas{w_n}([u])|\underset{n\to\infty}\longrightarrow 0.\]
Since this is true for any square pattern, we get the first part of the result.\bigskip{}

At time $t_n$, the copying process for $w_n$ is triggered. As explained in the last paragraph of the previous proof, between times $t_n+kC(n)$ and $t_n+(k+1)C(n)$ the copying process is contained in cells at distance $k\sqrt{n}$ to $(k+2)\sqrt{n}$ from the nearest heart. In particular, the main layers of cells at distance less than $k\sqrt{n}$ from the nearest heart contain concatenated copies of $w_n$ while those at distance more than $(k+2)\sqrt{n}$ still contain concatenated copies of $w_{n-1}$. 

Therefore, denoting by $h(c)$ the minimum distance between 0 and an heart in $c$, we have for any $t_n\leq t\leq t_{n+1}$: 
\[F^{t}\mu([u]) = \mu\left(h(c)\leq \frac {t-t_n}{C(n)}\sqrt{n}\right)\cdot\meas{w_n}([u]) + \mu\left(h(c)> \frac {t-t_n}{C(n)}\sqrt{n}\right)\cdot\meas{w_{n-1}}([u])+\underset{n\to\infty}o(1).\]
The second term contains $\mu\left(h(c)> \frac {t-t_n}{C(n)}\sqrt{n}\right)$ instead of the expected $\mu\left(h(c)> \left(\frac {t-t_n}{C(n)}+2\right)\sqrt{n}\right)$ to get an actual barycentre, the difference between them is asymptotically negligible in $n$. This equation holding for any square pattern $u$, we obtain:
\begin{equation}\dm\left (F^t\mu\ ,\ \mu\left(h(c)\leq \frac {t-t_n}{C(n)}\sqrt{n}\right)\cdot\meas{w_n} + \mu\left(h(c)> \frac {t-t_n}{C(n)}\sqrt{n}\right)\cdot\meas{w_{n-1}}\right)\underset{n\to\infty}\longrightarrow 0. \label{eq:distance}\end{equation}
The right-hand measure belonging to the segment $[\meas{w_{n-1}}, \meas{w_n}]$, and this being true for any $t_n\leq t\leq t_{n+1}$, we obtain the desired result.
\qed\end{proof}
\begin{proof}[of Theorem~\ref{thm:MainTheorem}]
By the right-hand part of Lemma~\ref{lem:close_to_polygonal_path}, we see that $\V(F,\mu)$ is included in the closure of the polygonal path delineated by the sequence $(\meas{w_n})_{n\in\N}$. We prove the other inclusion.

Take any $\nu\in [\meas{w_{n-1}}, \meas{w_n}]$. For $t_{n-1}\leq t\leq t_n$, denote $\mu_t$ the closest point to $F^t\mu$ in $[\meas{w_{n-1}}, \meas{w_n}]$; by Lemma~\ref{lem:close_to_polygonal_path}, $\dm(F^t\mu, \mu_t) \to 0$. We prove that $\nu$ is close to one of the $\mu_t$. By Equation~(\ref{eq:distance}), we have $\dm\left(F^t\mu, F^{t+1}\mu\right)\leq 2\mu\left(\frac {t-t_n}{C(n)}\leq h(c) \leq\frac {t+1-t_n}{C(n)}\right) +\underset{n\to\infty}o(1) \to 0$. Since $\dm\left(F^t\mu,\mu_t\right)=\underset{n\to\infty}o(1)$, it follows that $\dm(\mu_t, \mu_{t+1}) \underset{n\to\infty}o(1)$ as well.

Since $\dm\left(\mu_{t_i},\meas{w_i}\right)=\underset{n\to\infty}o(1)$ for any $i$ and $\dm(\mu_t, \mu_{t+1})\to 0$, it follows that \[\min_{t_n\leq t\leq t_{n+1}}\dm(\nu, \mu_t) = \underset{n\to\infty}o(1)\qquad \text{ and thus }\qquad\min_{t_n\leq t\leq t_{n+1}}\dm(\nu, F^t\mu) = \underset{n\to\infty}o(1).\]

This proves the other inclusion.
\qed\end{proof}

\section{Statement of the results}
\label{sec:results}
From Theorem~\ref{thm:MainTheorem} we deduce a number of results which are our main contributions.

\begin{corollary}
The measures $\nu\in\Ms(\azd)$ for which there exist:
\begin{itemize}
\item an alphabet $\B\supset \A$,
\item a cellular automaton $F:\bzd\to\bzd$, and
\item a non-degenerate Bernoulli measure $\mu\in\Ms(\bzd)$
\end{itemize}
such that $F^t\mu\xrightarrow[t\to\infty]{}\nu$, are exactly the limit-computable measures.
\end{corollary}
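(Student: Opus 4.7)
The plan is to prove both directions of the equivalence. The forward direction (any reachable $\nu$ is limit-computable) is immediate from Proposition~\ref{prop:OneStep}, while the reverse direction (any limit-computable $\nu$ is reachable) reduces to Theorem~\ref{thm:MainTheorem} via Proposition~\ref{prop:ApproximComputableMeasure}(ii).

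For the forward direction, suppose $\F^t\mu \to \nu$ where $\mu$ is a computable nondegenerate Bernoulli (e.g.\ the uniform one). By Proposition~\ref{prop:OneStep}, the sequence $(\F^t\mu)_{t\in\N}$ is uniformly computable, and its limit $\nu$ is therefore limit-computable. This is the only place where computability of $\mu$ enters; since Theorem~\ref{thm:MainTheorem} produces a single $F$ that works uniformly for every nondegenerate Bernoulli, the equivalence is indeed witnessed by computable initial measures such as the uniform one.

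For the reverse direction, let $\nu\in\Ms(\azd)$ be limit-computable. By Proposition~\ref{prop:ApproximComputableMeasure}(ii) there is a computable function $f:\N\to\A^\ast$ with $\meas{f(n)} \to \nu$; setting $w_n := f(n)$ produces a uniformly computable sequence of cubic patterns of dimension at most $d$. Feeding this sequence into Theorem~\ref{thm:MainTheorem} yields an alphabet $\B \supset \A$ and a cellular automaton $F : \bzd \to \bzd$ such that, for every nondegenerate Bernoulli $\mu \in \Ms(\bzd)$,
\[\V(F,\mu) = \bigcap_{N>0}\overline{\bigcup_{n\geq N}[\meas{w_n}, \meas{w_{n+1}}]}.\]

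It then remains to check that this intersection collapses to the singleton $\{\nu\}$, which gives $\F^t\mu \to \nu$. Since $\meas{w_n} \to \nu$, for every $\varepsilon>0$ there exists $N_\varepsilon$ with $\dm(\meas{w_n},\nu) \leq \varepsilon$ for all $n \geq N_\varepsilon$. The metric $\dm$ is convex in each argument on $\Ms(\azd)$ (each summand is the supremum of affine functionals of the argument), so closed $\dm$-balls are convex and every segment $[\meas{w_n},\meas{w_{n+1}}]$ with $n \geq N_\varepsilon$ is contained in $\overline{\Ball(\nu,\varepsilon)}$. Hence $\V(F,\mu) \subseteq \overline{\Ball(\nu,\varepsilon)}$ for all $\varepsilon>0$, forcing $\V(F,\mu) = \{\nu\}$. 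The substantive content of the corollary lies entirely in Theorem~\ref{thm:MainTheorem}; the convergence check above and the reduction to a computable initial measure are routine.
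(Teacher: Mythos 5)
Your proof is correct, and it reaches the conclusion by a slightly different decomposition than the paper. The paper derives Corollary~1 from Corollary~2 (the $\Pi_2$-CCC characterisation): it applies Theorem~\ref{thm:MainTheorem} to Proposition~\ref{prop:polygonalcover}, then invokes, without proof, the equivalence ``$\nu$ limit-computable $\Leftrightarrow$ $\{\nu\}$ is $\Pi_2$-CCC.'' You instead prove Corollary~1 directly: for the reverse direction you use Proposition~\ref{prop:ApproximComputableMeasure}(ii) to produce the computable sequence $(w_n)$ with $\meas{w_n}\to\nu$, and then must (and correctly do) verify that the polygonal-path limit collapses to $\{\nu\}$, via convexity of $\dm$ in each argument and hence convexity of closed $\dm$-balls --- a small explicit check that the paper delegates to Proposition~\ref{prop:polygonalcover}. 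For the forward direction you use Proposition~\ref{prop:OneStep} (first obstruction) rather than the second obstruction plus the $\Pi_2$-CCC-to-limit-computable translation. Your route is a bit more self-contained for Corollary~1 alone, at the cost of not factoring the work through Corollary~2. You also correctly flag the one subtlety --- that Proposition~\ref{prop:OneStep} needs $\mu$ computable, so the ``exists a nondegenerate Bernoulli'' in the statement must really be witnessed by a computable one (e.g.\ uniform), which Theorem~\ref{thm:MainTheorem} guarantees since the constructed $F$ works for every nondegenerate Bernoulli. This matches the paper's intended reading (``this result holds even when the initial measure is chosen to be uniform'').
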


\begin{corollary}
The connected sets of measures $\K\subset\Ms(\azd)$ for which there exist:
\begin{itemize}
\item an alphabet $\B\supset \A$,
\item a cellular automaton $F:\bzd\to\bzd$, and
\item a non-degenerate Bernoulli measure $\mu\in\Ms(\bzd)$
\end{itemize}
such that $\V(F,\mu)=\K$, are exactly the $\Pi_2$-computable, connected, compact sets of measures.
\end{corollary}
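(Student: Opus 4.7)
The proof is a direct consequence of the machinery already in place, and splits into the two implications implicit in the word \emph{exactly}. In one direction, any $\Pi_2$-computable, connected, compact set $\K$ is realised as $\V(F,\mu)$ for some cellular automaton $F$ and some Bernoulli measure $\mu$; in the other, any $\V(F,\mu)$ arising from a Bernoulli measure that happens to be connected is $\Pi_2$-computable and compact. Note that connectedness appears on both sides of the equivalence, so we do not need to argue that $\V(F,\mu)$ is connected for arbitrary $F$ and $\mu$: the quantification restricts attention to those that are.

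For the realisability direction, my plan is to chain Proposition~\ref{prop:polygonalcover} with Theorem~\ref{thm:MainTheorem}. Given a $\Pi_2$-CCC set $\K\subset\Ms(\azd)$, Proposition~\ref{prop:polygonalcover} furnishes a uniformly computable sequence $(w_n)_{n\in\N}$ of cubic patterns in $\B^\ast$ of dimension at most $d$ whose polygonal path limit coincides with $\K$. Feeding this sequence into Theorem~\ref{thm:MainTheorem} yields an alphabet $\A\supset\B$ and a cellular automaton $F:\azd\to\azd$ such that, for every non-degenerate Bernoulli measure $\mu\in\Ms(\azd)$,
\[\V(F,\mu)\;=\;\bigcap_{N>0}\overline{\bigcup_{n\geq N}\bigl[\meas{w_n},\meas{w_{n+1}}\bigr]}\;=\;\K.\]

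For the necessity direction, assume $\V(F,\mu)=\K$ for some cellular automaton $F$ and some non-degenerate Bernoulli measure $\mu$, with $\K$ connected. Non-degenerate Bernoulli measures are computable (their cylinder probabilities are explicit products of fixed rationals, approximable to arbitrary precision), so the second computability obstruction stated just before Figure~\ref{fig:arith} directly yields that $\V(F,\mu)$ is a nonempty $\Pi_2$-computable compact subset of $\Ms(\bzd)$. Combining this with the connectedness assumed by hypothesis gives the desired $\Pi_2$-CCC property. Since every step reduces to invoking a previously established result, I expect no genuine obstacle in the proof; the only subtlety worth flagging is the reading of the statement itself, which classifies connected realisations rather than asserting connectedness of $\V(F,\mu)$ in general.
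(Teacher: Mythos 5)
Your proof follows the same two-step strategy the paper uses: chain Proposition~\ref{prop:polygonalcover} with Theorem~\ref{thm:MainTheorem} for realisability, and invoke the second computability obstruction for necessity. One imprecision worth correcting: you assert that non-degenerate Bernoulli measures are automatically computable because ``their cylinder probabilities are explicit products of fixed rationals,'' but nothing in the definition of a non-degenerate Bernoulli measure forces the parameter vector $\lambda$ to consist of rationals or even computable reals; a Bernoulli measure with a non-computable parameter is not a computable measure, and the obstruction proposition would not apply to it. The paper glosses over the same point (and later, in the Rice-type corollary, explicitly restricts to rational parameters), so the necessity direction really only goes through for computable Bernoulli measures; your write-up should either state that hypothesis or observe, as the paper implicitly does, that the realisability direction works in particular for the uniform measure, so the characterisation is unaffected when one reads ``Bernoulli'' as ``computable Bernoulli.''
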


Furthermore, both corollaries hold if one requires the convergence to hold for all nondegenerate Bernoulli measures.

\begin{proof}
Apply Theorem~\ref{thm:MainTheorem} to Proposition~\ref{prop:polygonalcover}. To get Corollary 1, use the fact that $\nu$ is a limit-computable measure if and only if the singleton $\{\nu\}$ is a $\Pi_2$-computable set of measures (and of course connected).
\qed\end{proof}

Following \cite{HellouinSablik}, we obtain a similar characterisation using convergence in Cesàro mean (Corollary 5 in op.cit.) and a Rice-style theorem on $\mu$-limit measures set (Corollary 7 in op.cit.). Since the proofs of op.cit. only involve finding an appropriate uniformly computable sequence $(w_n)$ without modifying the cellular automaton, they can be carried straightforwardly to the $d$-dimensional case by replacing $\A^\Z$ by $\azd$ and we do not repeat them here.

\begin{corollary}
The sets of measures $\K'\subset \K \subset\Ms(\azd)$ for which there exist:
\begin{itemize}
\item an alphabet $\B\supset \A$,
\item a cellular automaton $F:\bzd\to\bzd$, and
\item a nondegenerate Bernoulli measure $\mu\in\Ms(\bzd)$
\end{itemize}
such that $\V(F,\mu)=\K$ and $\V'(F,\mu) = \K'$, are exactly the $\Pi_2$-computable, connected, compact sets of measures.
\end{corollary}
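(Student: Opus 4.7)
The necessity direction is routine and follows by the same arguments as Proposition~\ref{prop:OneStep} applied to the Cesàro averages $\frac{1}{T}\sum_{t=0}^{T-1}F^t\mu$, which form a uniformly computable sequence of measures when $\mu$ is computable. Both $\V(F,\mu)$ and $\V'(F,\mu)$ are then $\Pi_2$-computable compact sets; connectedness follows from the classical observation that consecutive iterates (respectively consecutive Cesàro means) are arbitrarily close in $\dm$, and the inclusion $\V'(F,\mu) \subset \V(F,\mu)$ is standard. These proofs are identical to the one-dimensional case of \cite{HellouinSablik} with only notational adjustments ($\A^{\Z}\to\azd$), so I would simply cite them.

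For sufficiency, the plan is to reduce everything to Theorem~\ref{thm:MainTheorem} by constructing a single uniformly computable sequence of cubic patterns $(w_n)_{n\in\N}$ that simultaneously encodes $\K$ as its polygonal-path limit set and $\K'$ as its Cesàro-average limit set. Applying Proposition~\ref{prop:polygonalcover} to $\K$ and $\K'$ separately produces two uniformly computable sequences $(u_k)_{k\in\N}$ and $(v_k)_{k\in\N}$ of cubic patterns whose polygonal paths limit to $\K$ and $\K'$ respectively. The core combinatorial step is to interleave them in strongly time-asymmetric blocks.

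The interleaving scheme: in block $n$, first scan through $u_0,\dots,u_n$ during a short prefix of length $\ell_n$, then scan through $v_0,\dots,v_n$ during a long suffix of length $L_n$, with $L_n$ chosen so that $\ell_n/L_n \to 0$ and in fact $\ell_n$ becomes negligible relative to the cumulative total time $\sum_{k\leq n}(L_k+\ell_k)$. For concreteness one can take $\ell_n = n$ and $L_n = 2^n\sum_{k<n}(L_k+\ell_k)$. Theorem~\ref{thm:MainTheorem} applied to $(w_n)$ yields a cellular automaton $F$ with $\V(F,\mu)$ equal to the polygonal-path closure of $(\meas{w_n})$, which by construction is $\K$. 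The Cesàro identity is then extracted from Lemma~\ref{lem:close_to_polygonal_path}, which lets us replace $F^t\mu$ by its nearest polygonal-path point when analysing time averages, so that the Cesàro averages of $F^t\mu$ are asymptotically close to the time-weighted averages of $(\meas{w_n})$.

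The main obstacle is checking that both limit-set identities hold simultaneously under this scheme. The inclusion $\K\subseteq\V(F,\mu)$ uses that every $u_k$ is visited cofinally (present in every block $n\geq k$), while $\V(F,\mu)\subseteq\K$ is immediate from the Theorem. For the Cesàro side, $\K'\subseteq\V'(F,\mu)$ holds because within any single block the $v$-suffix has positive Cesàro density (it dominates the block), so each $v_k$ contributes nontrivially at infinitely many observation times; conversely $\V'(F,\mu)\subseteq\K'$ relies on the fact that the aggregated contribution of all $u$-prefixes is of vanishing Cesàro weight, which is exactly what $\ell_n/L_n\to 0$ and the rapid growth of $L_n$ guarantees. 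As the authors explicitly note, this interleaving argument is purely about sequences of patterns and is entirely independent of the CA construction, so it transfers verbatim from \cite{HellouinSablik} once Theorem~\ref{thm:MainTheorem} is in place.
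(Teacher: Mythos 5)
Your high-level plan — necessity by the same obstruction arguments as Proposition~\ref{prop:OneStep} applied to Cesàro averages, sufficiency by constructing a single sequence $(w_n)$ and invoking Theorem~\ref{thm:MainTheorem} — is indeed the route the paper takes, and the paper's own treatment is as terse as yours: it simply states that the proofs of Corollaries~5 and~7 of \cite{HellouinSablik} ``only involve finding an appropriate uniformly computable sequence $(w_n)$ without modifying the cellular automaton'' and hence transfer. So at the level of strategy you are aligned with the text.

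The interleaving scheme you spell out, however, does not work with the construction of this paper, and the place it breaks is precisely the step you wave through. You bound the Cesàro contribution of the $u$-prefixes by the index ratio $\ell_n/L_n$, i.e.\ you reason as if Cesàro weight were proportional to how many \emph{indices} are spent on the prefix versus the suffix. But in the construction behind Theorem~\ref{thm:MainTheorem}, index $n$ of the sequence $(w_n)$ occupies the CA time interval $[t_n,t_{n+1}]$ of length $t_{n+1}-t_n=2^{n^{d-\frac14}}$, and for $d\geq 2$ this grows super-geometrically: $t_m/t_{m+1}\to 0$. Consequently, for $T\in[t_m,t_{m+1}]$ the Cesàro average $\frac{1}{T}\sum_{t<T}F^t\mu$ is asymptotically determined by the single most recent index $m$ (its weight $\frac{t_{m+1}-t_m}{t_{m+1}}\to 1$), so the Cesàro average has essentially no memory beyond one index. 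Whenever $m$ scans a $u$-prefix, the Cesàro average is therefore forced close to the corresponding $\meas{u_j}$, and since every $u_j$ is visited cofinally, $\V'(F,\mu)$ absorbs the entire $u$-polygonal-path: you get $\V'=\V=\K$, not $\K'$. No choice of $\ell_n, L_n$ repairs this, because each $u_j$ receives essentially full Cesàro weight at the moment it is active, regardless of how many $v$-indices surround it. This is exactly where ``carries over from the one-dimensional case with notational changes'' cannot be taken literally: in dimension one the analogous exponent is $3/4<1$, so the time increments are sub-geometric and the Cesàro average does blend over many indices, making index-count interleaving meaningful. For $d\geq 2$ the complexity and organism-growth constraints (cf.~Lemma~\ref{lem:density_hearts}) force at least geometric time increments, so your argument needs a genuinely time-weighted choice of $(w_n)$ — or a modified time schedule — and the key quantitative control is missing from your proposal.
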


In particular we characterise all sets of measures reachable at the limit in convergence in Cesàro mean from a Bernoulli measure, since those sets are necessarily connected (Section 1.2.3 in op.cit.). Here again, the result holds if one requires the convergence to hold for all nondegenerate Bernoulli measures.

\begin{corollary}
Let $P$ be a nontrivial property (i.e. not always or never true) on non-empty $\Pi_2$-computable, compact, connected sets of $\Ms(\azd)$. There is no algorithm that can decide, given an alphabet $\B$, a cellular automaton $F:\bzd\to\bzd$ and a Bernoulli measure $\mu\in\Ms(\bzd)$, whether $\V(F,\mu)$ satisfies $P$.
\end{corollary}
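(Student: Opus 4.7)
The plan is to prove this Rice-type theorem by reducing the halting problem to deciding $P$. Since $P$ is nontrivial on the class of non-empty $\Pi_2$-computable, compact, connected subsets of $\Ms(\azd)$, first fix two such sets $\K_0$ and $\K_1$ with $P(\K_0)$ true and $P(\K_1)$ false. By Proposition~\ref{prop:polygonalcover}, there exist uniformly computable sequences of cubic patterns $(w_n^{(0)})_{n\in\N}$ and $(w_n^{(1)})_{n\in\N}$ whose polygonal-path limits are $\K_0$ and $\K_1$, respectively.

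Given an instance $(M,x)$ of the halting problem, construct a new sequence $(w_n)_{n\in\N}$ as follows: to compute $w_n$, simulate $M$ on input $x$ for $n$ steps; if $M$ has not halted, output $w_n^{(0)}$; otherwise, if $M$ halted at step $n_0\leq n$, output $w_{n-n_0}^{(1)}$. This sequence is uniformly computable in $(M,x,n)$. Feeding it into Theorem~\ref{thm:MainTheorem} yields an alphabet $\B_{M,x}\supset\B$ and a cellular automaton $F_{M,x}$ such that, for any nondegenerate Bernoulli measure $\mu$, $\V(F_{M,x},\mu)$ equals the limit of the polygonal path of $(w_n)_{n\in\N}$. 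Since that limit depends only on the tail of the sequence, it equals $\K_0$ when $M$ does not halt on $x$ and $\K_1$ when $M$ halts on $x$. Consequently, any algorithm deciding $P(\V(F_{M,x},\mu))$ would decide whether $M$ halts on $x$, yielding the desired contradiction.

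The only point requiring care, and which I would expect to be the main technical obstacle, is the uniformity of the reduction: one must verify that the pair $(\B_{M,x}, F_{M,x})$ produced by the main construction can be extracted algorithmically from $(M,x)$, not merely that it exists. Inspection of Section~\ref{sec:construction} shows that the alphabet and the local rule depend on the input sequence only through the Turing machine computing the $w_n$ on the fourth sublayer of $\Acomp$, and such a machine is itself obtained algorithmically from $(M,x)$ via the prescription above. Once this uniformity is established, the reduction proceeds as sketched and the theorem follows.
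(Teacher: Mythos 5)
Your reduction from the halting problem is essentially the argument the paper has in mind: the paper itself defers to \cite{HellouinSablik} (Corollary 7), noting the proof ``only involve[s] finding an appropriate uniformly computable sequence $(w_n)$ without modifying the cellular automaton,'' which is precisely your switch-on-halting interleaving of $(w_n^{(0)})$ and $(w_n^{(1)})$ fed into Theorem~\ref{thm:MainTheorem}. You also correctly flag uniformity of the map $(M,x)\mapsto(\B_{M,x},F_{M,x})$ as the one point requiring verification, and your observation that the construction depends on $(w_n)$ only through the simulating Turing machine on the fourth sublayer of $\Acomp$ is the right justification.
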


Here it is assumed that the Bernoulli measure is finitely described by a list of (rational) parameters. A similar statement follows on nontrivial properties of limit-computable measures. This corollary would also hold if the property was required to hold, not only for one, but for some or all nondegenerate Bernoulli measure(s).

\section{Open questions}
The main questions that remain open concern the characterisation of non-connected sets of limit measures and the extension to more general sets of initial measures.  In particular, the result in the $1$-dimensional case holds for a large diversity of initial measures ($\sigma$-mixing with full support), which we could not obtain for the lack a finer analysis of the disappearance rate of the hearts.

A longer-term research direction concerns surjective cellular automata. The construction developed here is intrinsically non-surjective, and it does not seem that it can be adapted easily (in particular due to the key role of a time $0$). Surjective cellular automata are known to be Turing-universal in the classical sense, but surjectivity has a deep impact on the dynamics of the model which is specific to the probabilistic setting \cite{Kari2015}. In some sense, the question is whether this dynamical restriction is strong enough to lower the computing power of the model. Even seemingly simple questions, such that the existence of a non fully-supported set of limit measures, remain open.

\bibliographystyle{spmpsci}
\bibliography{biblio}

\end{document}